\documentclass[12pt]{article}
\usepackage[a4paper, margin=1in]{geometry}
\usepackage[utf8]{inputenc}
\usepackage{fullpage}
\usepackage{todonotes}
\usepackage{url}

\usepackage{float}% figure float effect

\usepackage{caption} % caption for figures, auto number fig

\usepackage{graphicx} %添加图片

\usepackage{amsmath, amsthm}

\usepackage{amssymb}
\usepackage{hyperref}

\newtheorem{theorem}{Theorem}[section]
\newtheorem{corollary}[theorem]{Corollary}
\newtheorem{lemma}[theorem]{Lemma}
\newtheorem{definition}[theorem]{Definition}

\newtheorem{fact}[theorem]{Fact}

\newtheorem{claim}[theorem]{Claim}
\newtheorem{proposition}[theorem]{Proposition}

\begin{document}

	\title{Transversal tilings in $k$-partite graphs without large holes}
	
	\author{Xinyu He\thanks{ School of Mathematics, Shandong University, Jinan, China, Email: {\tt xyHe@mail.sdu.edu.cn}.}, 
Xiangxiang Nie\thanks{Data Science Institute, Shandong University, Jinan, China, Email: {\tt xiangxiangnie@sdu.edu.cn}.}, 
Donglei Yang\thanks{School of Mathematics, Shandong University, Jinan, China, Email: {\tt dlyang@sdu.edu.cn}.}
}
	\date{}
	\maketitle
	\begin{abstract}

We show that for any constant $\mu>0$ and $k\ge 3$, there exists $\alpha>0$ such that the following holds for sufficiently large $n \in \mathbb{N}$. If $G=(V_{1},\ldots,V_{k},E)$ is a spanning subgraph of the $n$-blow-up of $K_{k}$ with ${\delta^*}(G)\geq (\frac{1}{2}+\mu) n$ and $\alpha^*_{k-1}(G)<\alpha n$, then $G$ has a transversal $K_{k}$-factor. Moreover, the bound $\frac{1}{2}$ is asymptotically tight for the case \(k=3\). In addition, we show that if $k\ge 4$, $G=(V_{1},\ldots,V_{k},E)$ is a spanning subgraph of the $n$-blow-up of $C_{k}$ with ${\delta^*}(G)\ge (\frac{2}{k}+\mu) n$, and $\alpha^*_{2}(G)<\alpha n$, then $G$ has a transversal $C_{k}$-factor. This extends a recent result of Han, Hu, Ping, Wang, Wang and Yang.
\end{abstract}

	\section{Introduction}	

For a graph $F$ on $[k]:=\left\{ 1,\ldots,k\right\}$, we say that $B$ is the \textit{$n$-blow-up} of $F$ if there exists a partition $V_{1},\ldots,V_{k}$ of $V(B)$ such that $\vert V_{1}\vert=\cdots=\vert V_{k} \vert =n$ and $\{u,v\}\in E(B)$ if and only if $u\in V_{i}$ and $v\in V_{j}$ for some $\{i,j\}\in E(F)$.
Given a spanning subgraph $G$ of $B$, we call the sequence $V_1,\dots, V_k$ the \textit{parts} of $G$ and we define 
\begin{equation*}
{\delta^*}(G):=\min_{\{i,j\}\in E(F)}\delta(G[V_i,V_{j}]),
\end{equation*}
where $G[V_i,V_{j}]$ is the bipartite subgraph of $G$ induced by the parts $V_i$ and $V_{j}$. 
For a graph $H$, we call $\mathcal{H}$ an \textit{$H$-tiling} of $G$
if $\mathcal{H}$ consists of vertex disjoint copies of $H$ in $G$ and we say that $\mathcal{H}$ \textit{covers} 
$V(\mathcal{H}) := \bigcup\{ V(H') : H' \in \mathcal{H}\}$. We say that $\mathcal{H}$ is \textit{perfect} or an
\textit{$H$-factor} if it covers every vertex of $G$. A subset $R$ of $V(G)$ is \textit{balanced} if $\vert R\cap V_1\vert=\cdots=\vert R\cap V_k\vert$. In particular,
we say a subset of $V(G)$ or a subgraph of $G$ is \textit{transversal} if it intersects each part in exactly one vertex. 
An $\mathcal{H}$-tiling is \textit{transversal} 
if every copy of $H$ in $\mathcal{H}$ is transversal.

Fischer \cite{1999Variants} conjectured a multipartite version of the Hajnal-Szemer{\'e}di Theorem, stating that if $G$ is a spanning subgraph of the $n$-blow-up of $K_k$ and ${\delta^*}(G) \ge \left(1 - \frac{1}{k}\right)n$, then $G$ has a $K_k$-factor.
In the same paper, he proved that for $k\in \{3, 4\}$, such a graph $G$ contains a $K_k$-tiling of size at least $n - o(1)$.
 Johansson \cite{johansson2000triangle} established that for every $n$, if $G$ is a spanning subgraph of the $n$-blow-up of $K_3$ with ${\delta^*}(G) \ge 2n/3 + \sqrt{n}$, then $G$ contains a $K_3$-factor, thus proving the triangle case of the conjecture asymptotically. Later, Lo and M\"arkstrom \cite{LoMarkstrom} and, independently, Keevash and Mycroft \cite{KeevashMycroft2015} proved the conjecture asymptotically for all $k \ge 4$. The following theorem, which was established for $k=3$ by Magyar and Martin \cite{Magyar2002TripartiteVO}, for $k=4$ by Martin and Szemer\'edi \cite{martin2008}, and for $k \ge 5$ by Keevash and Mycroft \cite{KeevashMycroft2015}, demonstrates that Fischer's original conjecture was nearly true for $n$ sufficiently large. (Keevash and Mycroft in fact proved a stronger result, see Theorem 1.1 in \cite{KeevashMycroft2015} for details.)

\begin{theorem}\label{thm:fischer}
  For every $k$, there exists $n_0 := n_0(k)$ such that whenever 
  $n \ge n_0$ the following holds
  for every spanning subgraph $G$ of the $n$-blow-up of $K_k$ with $
\delta^*(G) \ge \left(1 - \frac{1}{k}\right)n.$
  The graph $G$ does not contain a $K_k$-factor if and only if 
  both $n$ and $k$ are odd, $k$ divides $n$ and $G$ is isomorphic to a specific
  spanning subgraph $\Gamma_{n,k}$ of the $n$-blow-up of $K_k$ 
  with $\delta^*(\Gamma_{n,k}) = \left(1 - \frac{1}{k}\right)n$. 
\end{theorem}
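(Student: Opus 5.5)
This is the multipartite Hajnal--Szemer\'edi theorem of Magyar--Martin, Martin--Szemer\'edi and Keevash--Mycroft. I would prove it by the absorbing method combined with a stability analysis. This splits the proof into an extremal case and a non-extremal case.

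\textbf{Non-extremal case.} Fix $\gamma>0$. Call $G$ \emph{$\gamma$-extremal} if it is $\gamma n$-close (in edit distance on each pair $G[V_i,V_j]$) to one of two families of constructions:
\begin{itemize}
\item the "space barrier", in which some sets $A_i\subseteq V_i$ with $\sum|A_i|$ slightly above $n$ span few edges;
\item the "divisibility barrier", in which each $V_i$ splits into parts of size about $n/k$ according to a lattice structure, as in $\Gamma_{n,k}$.
\end{itemize}
When $G$ is not $\gamma$-extremal, I would proceed in three steps.
\begin{enumerate}
\item \emph{Absorbing structure.} Using $\delta^*(G)\ge(1-1/k)n$, show that every balanced $k$-set $S$ has $\Omega(n^{k^2})$ absorbing transversal configurations. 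The non-extremality is used to rule out lattice obstructions: for any two vertices $u,v$ in the same part, there are many transversal $(k-1)$-sets completing both to copies of $K_k$, possibly via short chains. A random selection then gives a small balanced absorbing set $A$.
\item \emph{Almost-perfect tiling.} Apply the multipartite regularity lemma to $G-A$. The reduced graph inherits $\delta^*\ge(1-1/k-\epsilon)m$. Fischer-type fractional tiling results (the asymptotic theorems of Lo--Markstr\"om and Keevash--Mycroft) then give an almost perfect fractional $K_k$-tiling, which the blow-up lemma converts into a $K_k$-tiling of $G-A$ covering all but $\epsilon n$ vertices.
\item \emph{Absorption.} The leftover is balanced, so $A$ absorbs it and we obtain a $K_k$-factor.
\end{enumerate}

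\textbf{Extremal case.} Here I would argue by exact structural analysis. First move a small number of atypical vertices between the approximate classes. Then cover these vertices by transversal $K_k$'s, using the minimum degree, so that the remaining classes have exactly the right sizes modulo the relevant constraints. Finally, complete the factor by applying a Hall-type or blow-up argument inside the near-complete structure.

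\textbf{Main obstacle.} The hardest part is the divisibility-barrier case. There one must show that the only configuration in which the parity and size constraints cannot be corrected is exactly $\Gamma_{n,k}$, which forces $n$ and $k$ odd with $k\mid n$. I would first try to find a single "switcher" copy of $K_k$ that crosses the classes in an odd pattern. Such a copy exists unless every pair $G[V_i,V_j]$ has exactly the $\Gamma_{n,k}$ structure and $\delta^*$ equals $(1-1/k)n$. For the converse direction, I would verify directly that $\Gamma_{n,k}$ has no $K_k$-factor, using a parity count of the copies of $K_k$ meeting the designated classes.
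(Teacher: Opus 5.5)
The paper does not prove this theorem; it quotes it from Magyar--Martin, Martin--Szemer\'edi and Keevash--Mycroft. Your outline has the right overall shape: absorption away from the extremal configurations plus a stability analysis near them, essentially the Keevash--Mycroft strategy. As written, though, one step fails and the part that makes the result exact is not proved.

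\emph{The passage from Step~2 to Step~3 does not work as written.} After deleting $A$ the parts have size $n'=n-|A|/k$, and you only know $\delta^*(G-A)\ge(1-\frac1k)n'-|A|/k^2$. The asymptotic theorems of Lo--Markstr\"om and Keevash--Mycroft need $\delta^*\ge(1-\frac1k+c)n'$. Applied below the threshold (for example after adding universal vertices to each part), they give a tiling whose leftover is bounded only by a multiple of the deficit, i.e.\ of $|A|$. But $A$ can only absorb leftovers of size at most some $\xi n$, which in a random-selection construction is a small fraction of $|A|$. With degree information alone this loss is genuine. Suppose $G$ contains an independent set $I$ with $|I\cap V_i|=n/k$ for all $i$ (this is compatible with $\delta^*(G)=(1-\frac1k)n$) and $A\cap I=\emptyset$. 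Then every transversal $K_k$-tiling of $G-A$ misses at least $|I|-n'=|A|/k$ vertices of $I$. So the absence of the space barrier must be used exactly in Step~2. One way is a robust lemma: a reduced graph far from the space barrier with $\delta^*\ge(1-\frac1k-\epsilon)m$ still has a perfect fractional $K_k$-tiling, so only the regularity error (chosen $\ll\xi$) remains. Your proposal invokes non-extremality only in Step~1. Even there, the lattice argument that turns ``far from a divisibility barrier'' into closedness of each $V_i$ is only asserted.

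\emph{The extremal case is not argued.} This is the larger gap, because the whole difference between the exact and the asymptotic theorem lies here. What you give is a template (move atypical vertices, cover them, finish with Hall or the blow-up lemma), not an argument. The decisive claim, that a parity-switching copy of $K_k$ exists unless $G\cong\Gamma_{n,k}$, is simply asserted. For graphs close to the space barrier, nothing explains how the exact degree condition produces precisely the right number of copies meeting the near-independent set in $0$ or $2$ vertices.

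The two extremal families also cannot be handled separately. In the standard construction of $\Gamma_{n,k}$, each $V_i$ is split into classes $V_i^1,\ldots,V_i^k$ of size $n/k$, and the indices $1,2$ are special. Each set $\bigcup_i V_i^j$ with $j\ge 3$ is independent of size exactly $n$. This tightness for the space barrier forces every copy in a $K_k$-factor to contain exactly two vertices of $\bigcup_i(V_i^1\cup V_i^2)$, both with the same index. Only then does the parity of $n$ give an obstruction. Separating $\Gamma_{n,k}$ from everything close to it is the long stability analysis of Keevash--Mycroft (and the corresponding analysis of Martin--Szemer\'edi for $k=4$). Without it, your proposal does not go beyond the asymptotic statement you cite.
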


A recent trend in Ramsey-Tur\'{a}n Theory concerning graph tiling problems, is to impose an additional constraint on the independence number so as to lower the minimum degree threshold required in the original problem. A typical example is a result of Balogh, Molla, and Sharifzadeh \cite{balogh2016triangle}, which states that if $\alpha_{2}(G)=o(n)$ and $\delta(G) \ge n/2 + o(n)$, then $G$ contains a triangle-factor. This significantly weakens the bound $\delta(G) \ge 2n/3$ from the Corr\'adi--Hajnal theorem. The sharp major term $n/2$ can be seen by considering the union of two disjoint clique of size $n/2 - 1$ and $n/2 + 1$. For general $K_k$-factors with $k\ge 4$, Knierum and Su~\cite{KnierimSu2021} proved an asymptotically tight minimum degree condition $\delta(G) \ge(1-\frac{2}{k})n+o(n)$ under a small independence number $\alpha(G)=o(n)$. There have been more recent developments on this problems~\cite{CHANG2023301,CHEN2024373,chen2025cliquefactorsgraphslowkellindependence,HanHuWangYang2023CliqueFactors,HMWY}.

Recall that there have been a great many of minimum-degree-type tiling results for multipartite graphs~\cite{ergemlidze2022transversal, 1999Variants,johansson2000triangle, KeevashMycroft,LoMarkstrom,Magyar2002TripartiteVO,martin2008}; however, these do not incorporate any pseudo-randomness conditions. A natural follow-up question arises: what can we say if we impose some pseudo-randomness constraints on the multipartite graphs?
To formulate this, we consider an analogous independence-number condition that precludes the existence of large partite holes as follows, which is formally introduced by Nenadov and Pehova~\cite{nenadov2020ramsey}.

\begin{definition}\label{def:holes}
\rm{(\cite{nenadov2020ramsey}) For an integer $r \geq 2$, an \textit{$r$-partite hole} of size $s$ in a graph $G$ is a collection of $r$ disjoint vertex subsets $U_1,\ldots,U_r\subset V(G)$ of size $s$ such that there is no copy of $K_r$ in $G$ with one vertex in each of the $U_i$, $i\in[r]$.
For a graph $G$, $\alpha^*_r(G)$ denotes the size of the largest $r$-partite hole in $G$. 
}
\end{definition}
In particular, when $G$ itself is a $k$-partite graph, we additionally require in Definition~\ref{def:holes} that each $U_i$ should lie inside a partite set. In this case, it seems plausible to impose a small partite hole condition so as to weaken the minimum degree condition for example in Theorem~\ref{thm:fischer}.
Our first main result studies the minimum degree threshold for the existence of a transversal $K_k$-factor in $k$-partite graphs with sublinear $(k-1)$-partite holes and $k$-partite holes, respectively. This can be regarded as a generalization of the result of Balogh--Molla--Sharifzadeh on triangle factors in the partite setting.

	\begin{theorem}\label{thm1}
	Given a positive integer $k\geq3$ and positive constant $\delta$, there exists $\alpha>0$ such that the following holds for sufficiently large $n \in \mathbb{N} $. 
	Let $G=(V_{1},\ldots,V_{k},E)$ be  a spanning subgraph of the n-blow-up of $K_{k}$ with ${\delta^*}(G)\geq \delta n$ and $\alpha^*_{r}(G)<\alpha n$. If either of the following conditions is satisfied:
    \begin{enumerate}
        \item $\delta>\frac{1}{2}$ and $r=k-1$;
        \item $\delta>0$ and $r=k$.
    \end{enumerate}
    
    Then $G$ has a transversal $K_{k}$-factor.
	\end{theorem}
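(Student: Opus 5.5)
We prove both cases of Theorem~\ref{thm1} with the absorbing method, in three steps. First we find a small absorbing set $A$. Then we cover almost all of the remaining vertices with a transversal $K_k$-tiling. Finally $A$ absorbs the small balanced leftover. The constants are chosen as $1/n \ll \alpha \ll \gamma \ll \beta \ll \mu,1/k$, where $\mu$ is the slack above the degree threshold. Case~2 ($r=k$, any $\delta>0$) and Case~1 ($r=k-1$, $\delta>1/2$) use the same framework. They differ only in how we produce transversal copies of $K_k$, or of a nearly complete structure, inside linear-sized sets.

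\emph{Key tool: finding cliques.} In Case~2, any sets $U_i\subseteq V_i$ with $|U_i|\ge \alpha n$ span a transversal $K_k$, by the definition of $\alpha^*_k(G)$. Applying this repeatedly shows that every family of such sets of size $\ge \beta n$ contains a transversal $K_k$-tiling covering all but $\alpha n$ vertices in each part. In Case~1 we get a transversal $K_k$ as follows. Take a vertex $v\in V_k$. Since $\delta^*\ge(1/2+\mu)n$, for any $U_1,\dots,U_{k-1}$ of size $\ge(1/2-\mu/2)n$ the sets $N(v)\cap U_i$ have size $\ge \mu n/2\ge\alpha n$. As $\alpha^*_{k-1}(G)<\alpha n$, this yields a $K_{k-1}$ in the neighbourhood of $v$. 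Hence every vertex lies in many transversal $K_k$'s, in fact $\Omega(n^{k-1})$ of them by supersaturation.

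\emph{Absorbing.} We use the lattice-based absorbing approach of Han. For a balanced $k$-set $S$ (one vertex per part), an absorber is a set $T$ of size $k(t)$, for some bounded $t$, such that both $G[T]$ and $G[T\cup S]$ have transversal $K_k$-factors. It suffices to show that any two vertices $u,u'$ in the same part $V_i$ are \emph{reachable*}. This means that for linearly many sets $W$ of size $k t-1$, both $G[W\cup\{u\}]$ and $G[W\cup\{u'\}]$ have transversal $K_k$-factors. In Case~1 we take a common neighbourhood argument. Since $\delta^*>n/2$, the vertices $u,u'$ have at least $2\mu n$ common neighbours in each other part. Inside these common neighbourhoods, the hole condition for $(k-1)$-partite sets gives a transversal $K_{k-1}$, which completes both $u$ and $u'$ to a $K_k$. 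This gives $\Omega(n^{k-1})$ such $W$ with $t=1$, so all vertices in the same part are 1-reachable. In Case~2, common neighbourhoods may be empty. We instead use chains: $u\sim w$ whenever $N(u)\cap V_j$ and $N(w)\cap V_j$ overlap in $\ge\beta n$ vertices for all $j\ne i$. By pigeonhole, each part splits into at most $\lceil 1/\delta\rceil$ near-classes. We then show that the transversal structure merges the classes. Using $k$-partite cliques found via the hole condition, we get a closed "switching" between classes in different parts, which produces reachability through chains of bounded length. Once every pair is reachable, the standard probabilistic selection (Lo--Markström) gives an absorbing family $\mathcal{F}$ of $\le\gamma n$ disjoint absorbers. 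Every balanced set of size $\le\gamma^2 n$ then has $\Omega(\gamma n)$ absorbers in $\mathcal F$.

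\emph{Almost cover and finish.} Delete $V(\mathcal F)$; the minimum degree drops by at most $k\gamma n$. Greedily take transversal $K_k$'s until in each part fewer than $\gamma^2 n/k$ vertices remain. While at least $\gamma^2 n/k\ge \alpha n$ vertices remain in each part, the clique tool still applies. In Case~2 this is immediate. In Case~1 the degree into small leftover sets is not controlled, so we instead use the regularity lemma for the reduced $k$-partite graph. There, $\delta^*>1/2$ together with the hole condition implies that every $k$ clusters induce $\varepsilon$-regular pairs containing $K_k$'s. A fractional (or perfect) $K_k$-tiling of the reduced graph then covers almost everything. The leftover is balanced because each copy of $K_k$ and the whole vertex set are balanced. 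The absorbers then absorb it. \textbf{Main obstacle:} the reachability step in Case~2, where there is no common-neighbourhood guarantee and merging the classes must use only the absence of $k$-partite holes. A second obstacle is the almost-tiling step in Case~1, which needs regularity together with the hole condition for $(k-1)$-partite sets.
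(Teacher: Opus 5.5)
Your overall framework (absorbing set, almost cover, absorb) matches the paper. Your Case~1 reachability argument is Claim~\ref{96}, and your Case~2 greedy almost cover is fine. However, there are three genuine gaps.

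\textbf{The absorbing step.} The count of $\Omega(n^{k-1})$ connectors ``by supersaturation'' is false. The hole condition only says that every $(k-1)$-tuple of $\alpha n$-sets contains \emph{one} copy of $K_{k-1}$. That yields linearly many disjoint copies, but no positive-density count, because hole-free pieces may be sparse. Here is a counterexample for $k=3$ and small $\mu$:
\begin{itemize}
\item let $N(u)\cap V_j$ and $N(u')\cap V_j$ ($j=2,3$) meet exactly in sets $D_j$ of size $2\mu n$;
\item let $G[V_2,V_3]$ be complete, except that $G[D_2,D_3]$ is random with edge probability $C/n$, where $C$ is large in terms of $\alpha$;
\item join every other vertex of $V_1$ to all of $V_2\cup V_3$.
\end{itemize}
Then $\delta^*(G)\ge(\frac12+\mu)n$ and $\alpha^*_2(G)<\alpha n$, yet $u,u'$ have only $O(n)$ connectors of size $2$.

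Your reachability notion, asking only for ``linearly many'' $W$, is likewise far too weak for the Lo--Markstr\"om random selection. That method needs $\Omega(n^{|T|})$ absorbers $T$ for every $k$-set, and nothing in the hole condition supports such counts. What the hole condition does give is \emph{robust} reachability: for every $W$ with $|W|\le\beta n$ there is a connector avoiding $W$. Greedily, this gives linearly many \emph{vertex-disjoint} absorbers per transversal $k$-set (Lemma~\ref{7}). The paper then assembles the absorbing set with Montgomery's template (Lemma~\ref{31}) rather than by random selection.

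\textbf{Case~2 reachability.} Your near-class and ``switching'' argument is not carried out, and it is unnecessary. For $u,v\in V_1$, pick disjoint sets $D_{j,1}\subseteq N(u)\cap V_j$ and $D_{j,2}\subseteq N(v)\cap V_j$ of size $2\alpha n$ avoiding $W$. By $\alpha^*_k(G)<\alpha n$, all but fewer than $\alpha n$ vertices $w\in V_1\setminus W$ extend $D_{2,1},\dots,D_{k,1}$ to a transversal $K_k$, and likewise for $D_{2,2},\dots,D_{k,2}$. A common such $w$ gives copies $\{w,y_2,\dots,y_k\}$ and $\{w,z_2,\dots,z_k\}$ with $y_j\in N(u)$ and $z_j\in N(v)$. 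Then $\{w,y_2,\dots,y_k,z_2,\dots,z_k\}$ is a connector for $u,v$; this is Claim~\ref{3456}.

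\textbf{The Case~1 almost cover.} For $k\ge4$ the reduced graph need not contain any transversal $K_k$, so there is no $K_k$-tiling of it to use. It is also false that arbitrary $k$ clusters host $K_k$'s. The $(k-1)$-partite hole condition does not produce a $K_k$ in arbitrary linear sets, and it does not pass to the reduced graph, since sparse hole-free pairs have density below $d$. For example:
\begin{itemize}
\item split each $V_i$ into three equal pieces and join the $a$-th piece of $V_i$ to the $b$-th piece of $V_j$ iff $a\neq b$, so $\delta^*=2n/3$ and there is no transversal $K_4$;
\item add a random $4$-partite graph of density $n^{-\epsilon}$ to destroy all $3$-partite holes of size $\alpha n$.
\end{itemize}
Every transversal $K_4$ then uses a random edge, so there are $o(n^4)$ of them, and the counting lemma makes the reduced graph $K_4$-free.

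The missing idea is to tile the reduced graph with transversal stars $K_{1,k-1}$. The bound $\delta^*(R_d)>N_0/2$ gives, by Hall, a perfect matching between $\mathcal{V}_1$ and each $\mathcal{V}_j$, hence a perfect star tiling. Inside a star, a typical vertex $x$ of the centre cluster has at least $(d-\varepsilon)|Z_j|\ge\alpha n$ neighbours in each leaf subset $Z_j$. Then $\alpha^*_{k-1}(G)<\alpha n$ yields a transversal $K_{k-1}$ among these neighbourhoods, which together with $x$ is a transversal $K_k$. Iterating covers all but a $\zeta$-fraction of each star's clusters.
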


It is worth remarking that the degree condition in Theorem~\ref{thm1} part (1) is asymptotically tight for the case $k=3$, while the case $k\ge 4$ is still unclear. This is essentially derived from a construction given in \cite{HMWY}.

\begin{lemma}\label{020920} \rm{(\cite{HMWY})}
 For any $0<\alpha<1$ and all sufficiently large integers $n\in3\mathbb{N}$, letting $d=2\lceil(4/\alpha)^4\rceil$, there exists an $n$-vertex graph $G_0$ with $\delta(G_0)\ge n/2-2d^2$ and $\alpha^*_{\mathrm{2}}(G_0)<\alpha n$ such that $G_0$ contains no $K_3$-factor. 
\end{lemma}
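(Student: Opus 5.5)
The plan is to modify the two-clique extremal example for Corr\'adi--Hajnal type problems. The two parts must be glued by a sparse pseudo-random bipartite graph, so that $2$-partite holes are destroyed. We must also make sure that no triangle can use a cross edge, so the divisibility obstruction survives.

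\textbf{Step 1 (the two sides).} Split the $n$ vertices into $A\cup B$ with $|A|\in\{\lfloor n/2\rfloor,\lfloor n/2\rfloor+1\}$ chosen so that $|A|\not\equiv 0 \pmod 3$. Since $n\in 3\mathbb{N}$, this also gives $|B|\not\equiv 0\pmod 3$.

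\textbf{Step 2 (the expander between the sides).} Let $H$ be a bipartite graph between $A$ and $B$ with maximum degree at most $d$ and the following mixing property: for all $X\subseteq A$ and $Y\subseteq B$ with $|X|,|Y|\ge \alpha n/2$, there is an edge of $H$ between $X$ and $Y$. Such an $H$ can be obtained from a random (near-)$d$-biregular bipartite graph, or from a bipartite Ramanujan-type expander, via the expander mixing lemma. The choice $d=2\lceil(4/\alpha)^4\rceil$ gives second eigenvalue of order $\sqrt d$. Then the mixing lemma shows that
\[
e_H(X,Y)\ge \frac{d|X||Y|}{n/2}-O(\sqrt d)\sqrt{|X||Y|}>0
\]
whenever $|X|,|Y|\ge \alpha n/2$. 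Checking this inequality with the stated value of $d$ is routine but must be done carefully. I expect it, together with securing the existence of $H$ when $|A|\neq|B|$, to be the main technical point. I would handle the unequal sizes by deleting one vertex from a balanced expander, or by using a random bipartite graph with degrees in $\{d-1,d\}$.

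\textbf{Step 3 (the graph $G_0$).} Start with the cliques on $A$ and on $B$ plus $E(H)$. Then delete every edge $aa'$ inside $A$ such that $a,a'$ have a common $H$-neighbour, and likewise inside $B$. Each vertex has at most $d\cdot d=d^2$ vertices at $H$-distance two, so the minimum degree satisfies
\[
\delta(G_0)\ge \min(|A|,|B|)-1-d^2\ge n/2-2d^2 .
\]

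\textbf{Step 4 (no $K_3$-factor).} Consider any triangle meeting both sides. It consists of two vertices on one side, say $a,a'\in A$, joined by an edge, together with $b\in B$ adjacent to both. Then $ab$ and $a'b$ are edges of $H$, so $a,a'$ have a common $H$-neighbour and $aa'$ was deleted, a contradiction. Hence every triangle lies inside $A$ or inside $B$. A $K_3$-factor would therefore tile $A$ by itself, which is impossible since $3\nmid |A|$.

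\textbf{Step 5 (no large $2$-partite hole).} Take disjoint $U_1,U_2$ of size $\alpha n$. Each $U_i$ has at least $\alpha n/2$ vertices on one side.

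If these majority sides are different, apply the mixing property of $H$ to obtain an edge between $U_1$ and $U_2$.

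If they coincide, say both are $A$, then $G_0[A]$ is a clique minus a graph of maximum degree at most $d^2$. A vertex of $U_1\cap A$ therefore has a neighbour in $U_2\cap A$, because $\alpha n/2>d^2$ for large $n$.

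In either case $U_1,U_2$ is not a hole, so $\alpha_2^*(G_0)<\alpha n$.
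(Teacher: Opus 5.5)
Your proof is correct. The choice of $|A|$ with $3\nmid|A|$ (hence $3\nmid|B|$) works, and so does deleting intra-side edges between vertices with a common $H$-neighbour: this costs each vertex at most $d(d-1)$ edges, which the slack in $n/2-2d^2$ absorbs. Every triangle is then confined to $A$ or to $B$, and your two-case analysis of a would-be hole is sound. The paper gives no proof of this lemma; it is imported from \cite{HMWY}. So the only remarks concern the one ingredient you leave external, the graph $H$.

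First, the sentence ``the choice $d=2\lceil(4/\alpha)^4\rceil$ gives second eigenvalue of order $\sqrt d$'' is misphrased. An eigenvalue bound such as $\lambda\le 2\sqrt{d-1}$ is a property of the expander you pick, and it holds for every $d$. For instance, bipartite Ramanujan (multi)graphs exist for every degree and every size by Marcus--Spielman--Srivastava, and multiple edges are harmless here. The value of $d$ only serves to make $\lambda m/d<\alpha n/2$ with $m:=\max\{|A|,|B|\}$, i.e.\ roughly $\sqrt d>2/\alpha$, which holds with a huge margin.

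You can also avoid spectral theory altogether. Take $H$ to be the union of $d$ independent uniform bijections between two sets of size $m$. For fixed $X,Y$ of size $s=\lceil\alpha n/2\rceil$, the probability that no bijection sends a vertex of $X$ into $Y$ is at most $(1-s/m)^{sd}\le e^{-ds^2/m}$. A union bound over at most $(em/s)^{2s}$ pairs then succeeds, since $ds/m\ge d\alpha/2$ is far larger than $2\ln(em/s)$.

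Second, when $n$ is even, $3\mid n$ forces $3\mid n/2$. So you always get $|A|=n/2+1$ and $|B|=n/2-1$, which differ by two, not one. Build a balanced $H$ with sides of size $m=n/2+1$ and delete two vertices from one side. Both the degree bound and the property that every pair of sets of size at least $\alpha n/2$ spans an edge are inherited by this subgraph.
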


\begin{corollary}\footnote{Such a graph could be obtained by uniformly and randomly splitting the graph $G_0$ as in Lemma~\ref{020920} into three equal parts and then using standard concentration inequalities.}
    For all $\alpha, \varepsilon>0$ such that the following holds for sufficiently large $n \in \mathbb{N} $. 
There exists a spanning subgraph $H$ of the $n$-blow-up of $K_{3}$ with ${\delta^*}(H)\geq (1/2-\varepsilon) n$, $\alpha^*_{\rm 2}(H)<\alpha n$ and $H$ contains no transversal $K_{3}$-factor.
\end{corollary}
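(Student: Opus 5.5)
We take $G_0$ from Lemma~\ref{020920}, with a smaller hole parameter, and split it uniformly at random into three equal parts. Given $\alpha,\varepsilon>0$, set $\alpha'=\alpha/4$ and let $N=3n$ with $n$ large. Lemma~\ref{020920} applied with $\alpha'$ gives an $N$-vertex graph $G_0$ with $\delta(G_0)\ge N/2-2d^2$, where $d$ depends only on $\alpha$. It also gives $\alpha^*_2(G_0)<\alpha' N = \tfrac34\alpha n<\alpha n$, and $G_0$ has no $K_3$-factor. Choose a uniformly random equipartition $V(G_0)=V_1\cup V_2\cup V_3$ with $|V_i|=n$. Let $H$ be the spanning subgraph of the $n$-blow-up of $K_3$ consisting of exactly those edges of $G_0$ that go between different parts.

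\textbf{The hole condition transfers deterministically.} A $2$-partite hole in $H$ consists of sets $U_1\subseteq V_i$ and $U_2\subseteq V_j$ with $i\ne j$ and no $H$-edge between them. Since $U_1,U_2$ lie in different parts, every $G_0$-edge between them is an $H$-edge, so there is no $G_0$-edge between them either. Hence $U_1,U_2$ form a $2$-partite hole in $G_0$, and $\alpha^*_2(H)\le\alpha^*_2(G_0)<\alpha n$.

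\textbf{The factor condition transfers deterministically.} A transversal $K_3$-factor of $H$ is a set of $n$ vertex-disjoint triangles covering $V(H)=V(G_0)$. All its edges lie in $G_0$, so it would be a $K_3$-factor of $G_0$, which does not exist. Hence $H$ has no transversal $K_3$-factor.

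\textbf{Minimum degree.} This is the only probabilistic step, and the only place needing care. Fix a vertex $v$ and a part index $j$. Conditioned on $v\in V_i$ with $i\neq j$, the set $V_j$ is a uniformly random $n$-subset of the remaining $3n-1$ vertices. So $|N_{G_0}(v)\cap V_j|$ is hypergeometric with mean at least
\[
\frac{n}{3n-1}\Big(\frac{3n}{2}-2d^2\Big)\ge \frac n2-d^2 .
\]
Hypergeometric Chernoff bounds give
\[
|N_{G_0}(v)\cap V_j|\ge \Big(\frac12-\varepsilon\Big)n
\]
except with probability $e^{-\Omega(\varepsilon^2 n)}$, since $d^2$ is a constant and is therefore $\le\varepsilon n/2$ for large $n$. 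A union bound over the $3n$ vertices and the $3$ choices of $j$ shows that with positive probability every such bound holds, giving $\delta^*(H)\ge(1/2-\varepsilon)n$.

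Fixing such an outcome proves the corollary. The one technical point is that a random equipartition is not a product measure, so independent Chernoff bounds cannot be applied directly. The concentration must come from the hypergeometric tail bound, which is standard and no weaker than the binomial bound.
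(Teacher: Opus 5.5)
Your proposal is correct and follows the route indicated in the paper's footnote: split the graph $G_0$ from Lemma~\ref{020920} (on $3n$ vertices, with a rescaled hole parameter) uniformly at random into three parts of size $n$, and keep only the crossing edges. You also supply the details the footnote leaves implicit: the hole bound and the absence of a $K_3$-factor transfer deterministically from $G_0$ to $H$, and the partite degree bound follows from a hypergeometric tail estimate combined with a union bound over the $O(n)$ vertex--part pairs.
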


In this direction for large cliques $K_k$ with $k\ge 4$, it would be interesting to determine the minimum degree threshold for transversal $K_k$-factors under the condition $\alpha^*_r(G)=o(n)$, where $r\le k$. In particular, for the case $r=2$, it seems plausible to show that $\delta^*(G)=(1-\tfrac{2}{k})n+o(n)$ is enough, as indicated by an aforementioned result of Knierum and Su~\cite{KnierimSu2021}, which is yet out of reach in this paper. 

Along this way, Han, Hu, Ping, Wang, Wang and Yang \cite{Han_Hu_Ping_Wang_Wang_Yang_2024} studied (among others) the transversal cycle factors and showed that if $G$ is a spanning subgraph of the $n$-blow-up of $C_k$ with $k \in 4\mathbb{N}$, ${\delta^*}(G) = \frac{2}{k}n+o(n)$ and $\alpha_{2}^*(G)=o(n)$, then $G$ contains a transversal $C_k$-factor. 
Our second result strengthens this as follows, where we do not require $k$ to be a multiple of $4$.

\begin{theorem}
\label{thm2}
        Given a positive integer $k\geq4$ and $\delta>\frac{2}{k}$, there exists $\alpha>0$ such that the following holds for sufficiently large $n \in \mathbb{N} $. 
	Let $G=(V_{1},\ldots,V_{k},E)$ be  a spanning subgraph of the n-blow-up of $C_{k}$ with ${\delta^*}(G)\geq \delta n$ and $\alpha^*_{2}(G)<\alpha n$. Then $G$ has a transversal $C_{k}$-factor.
\end{theorem}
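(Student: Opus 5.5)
We follow the absorbing method, splitting the proof of Theorem~\ref{thm2} into an absorbing lemma and an almost-cover lemma. Throughout, copies of $C_k$ are transversal, with the $i$-th vertex in $V_i$ and consecutive vertices adjacent in $G[V_i,V_{i+1}]$ (indices mod $k$). The basic tool is the following consequence of $\alpha^*_2(G)<\alpha n$: between any $A\subseteq V_i$ and $B\subseteq V_{i+1}$ with $|A|,|B|\ge \alpha n$ there is an edge. Iterating this (greedily deleting vertices of low degree into the next large set), we can find transversal paths $v_iv_{i+1}\cdots v_j$ with each $v_\ell$ in a prescribed linear-sized set $S_\ell\subseteq V_\ell$. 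The minimum degree $\delta^*(G)\ge(2/k+\mu)n$ is used to make the neighbourhoods, and hence these prescribed sets, linear-sized.

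\textbf{Step 1 (absorbing set).} We aim for a balanced set $A$ of size at most $\gamma n$ such that for every balanced $R$ with $|R|\le \gamma' n$ (where $\gamma'\ll\gamma$), the graph $G[A\cup R]$ has a transversal $C_k$-factor. By the lattice-based absorbing framework (Han's method), it suffices to prove two things.
\begin{itemize}
\item Every vertex $v\in V_i$ lies in $\Omega(n^{k-1})$ transversal $C_k$'s. This holds because $v$ has $\ge \delta n$ neighbours in each of $V_{i-1}$ and $V_{i+1}$, and the path-connecting tool closes the cycle in $\Omega(n^{k-3})$ ways after choosing the two neighbours.
\item Any two vertices $u,u'$ in the same part $V_i$ are $(\beta,t)$-reachable, i.e.\ there are $\Omega(n^{tk-1})$ sets $S$ such that both $G[S\cup\{u\}]$ and $G[S\cup\{u'\}]$ have transversal $C_k$-factors.
\end{itemize}
For reachability we want a single cycle through $u$ and a single cycle through $u'$ that share the other $k-1$ vertices. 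This requires a common neighbour in $V_{i-1}$ and a common neighbour in $V_{i+1}$, which is not guaranteed, since $\delta$ may be near $2/k<1/2$. So we instead partition each $V_i$ into a bounded number of classes of mutually reachable vertices. Vertices $u,u'$ with $|N(u)\cap N(u')\cap V_{i\pm1}|\ge \beta n$ are $1$-reachable. Since every neighbourhood has size $>2n/k$, pigeonhole shows that among any $\lceil k/2\rceil+1$ vertices two have large common neighbourhoods on both sides. Hence there are at most $k/2$ classes per part, and transitivity of reachability (at the cost of increasing $t$) merges these.

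The crux is showing that the \emph{classes connect across parts}, i.e.\ that the reduced lattice generated by transversal edge-vectors of the classes contains every balanced vector. Here $\alpha^*_2=o(n)$ is decisive. If $X\subseteq V_i$ and $X'\subseteq V_{i+1}$ are classes of linear size, then there is an edge between them, and cycles through such edges give enough flexibility to "swap" classes. I expect this lattice/transferral argument, which must handle the case $k\not\equiv 0 \pmod 4$ where parity-type obstructions could appear, to be the main obstacle. I would first try to show directly that $\delta>2/k$ forces each part to be a single class up to $o(n)$ vertices, using the pigeonhole above together with the edge between any two linear sets.

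\textbf{Step 2 (almost cover).} In $G-A$ we find a transversal $C_k$-tiling covering all but at most $\gamma' n$ vertices, leaving a balanced set of leftover vertices. We apply the regularity lemma to the $k$-partite graph to obtain a reduced $k$-partite cluster graph $\Gamma$ with $\delta^*(\Gamma)\ge(2/k+\mu/2)m$. For regular pairs whose density is only $o(1)$ but positive, the small-hole condition still allows embedding along them, in the style of Balogh--Molla--Sharifzadeh and Knierim--Su. It therefore suffices to find a fractional tiling of $\Gamma$ by "transversal $C_k$-homomorphic" structures in which each consecutive pair of clusters is merely an edge of $\Gamma$. By the weak hole condition, a $C_k$ can be embedded into any $k$ clusters $X_1,\dots,X_k$ with $X_i\in V_i$ and $X_iX_{i+1}\in E(\Gamma)$ for all $i$ (regular pairs of any positive density contain many edges between linear subsets). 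So we need a near-perfect fractional matching of $\Gamma$ into such cyclic transversal structures. With $\delta^*>2m/k$, this follows by LP duality / a greedy augmenting argument: a maximal such tiling leaving $\ge\gamma m$ clusters in each part uncovered can be augmented using the degree bound, since the $2/k$ threshold is exactly what lets the neighbourhoods of two consecutive uncovered clusters meet a common cycle of the tiling in enough positions. We then convert the fractional tiling into an actual tiling via regularity and the hole condition.

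\textbf{Step 3 (finishing).} The leftover $R$ is balanced with $|R|\le\gamma' n$, so the absorbing property of $A$ covers $A\cup R$, completing the transversal $C_k$-factor.
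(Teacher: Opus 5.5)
\textbf{Gap 1: the almost-cover step fails at its core.} You reduce to a near-perfect fractional tiling of $\Gamma$ by transversal $k$-tuples $X_1,\dots,X_k$ in which \emph{every} pair $X_iX_{i+1}$ is an edge of $\Gamma$, i.e.\ by transversal copies of $C_k$ in $\Gamma$. For $k\ge 5$, the condition $\delta^*(\Gamma)>2m/k$ does not guarantee even one such copy.

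Here is an example. Split each $V_i$ into halves $A_i,B_i$. For $i<k$, join $A_i$ to $A_{i+1}$ and $B_i$ to $B_{i+1}$ completely, and join $A_k$ to $B_1$ and $B_k$ to $A_1$. Then add a random bipartite graph of density $\Theta_\alpha(1/n)$ between consecutive parts, which destroys all bipartite holes of size $\alpha n$. This $G$ satisfies $\delta^*(G)\ge n/2>2n/k$ and $\alpha^*_2(G)<\alpha n$. However, every transversal $C_k$ switches between the $A$-side and the $B$-side an odd number of times, so it uses a sparse edge. Hence $G$ has only $O_\alpha(n^{k-1})$ transversal $C_k$'s, and by the counting lemma the reduced graph contains no transversal $C_k$ at all. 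No LP-duality or augmenting argument can then produce your tiling. Admitting sparse pairs into $\Gamma$ does not help either: to close a cycle you need a \emph{specific} vertex with linearly many neighbours in a prescribed cluster, and the hole condition never supplies that.

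The missing idea is that only \emph{two} edges of each cycle must come from dense regular pairs. The paper tiles $R_d$ with $H\in\{P_3^*,M_2\}$, i.e.\ a path on three clusters or two disjoint edges, plus isolated clusters. It uses regularity to fix one vertex of the middle cluster with large neighbourhoods on both sides (respectively, two vertices at the inner ends of the two edges). It then completes the cycle through all remaining clusters using the transversal paths of Proposition~\ref{20021117}. The threshold $2/k$ enters only when showing that a maximal $\{P_3^*,M_2\}$-tiling of $R_d$ is perfect. Each uncovered cluster has total degree more than $4N_0/k$, where $N_0$ is the number of clusters per part. On the other hand, each tile receives at most $4|L|/k$ edges from the uncovered set $L$ (Fact~\ref{0485}), which gives a contradiction.

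\textbf{Gap 2: the absorbing step fails on the same example.} The counting form of Han's framework that you invoke needs $\Omega(n^{k-1})$ transversal $C_k$'s through every vertex and $\Omega(n^{tk-1})$ connectors. In the example above, each vertex lies in only $O_\alpha(n^{k-2})$ transversal $C_k$'s, and connectors are correspondingly rare. Your claim that the cycle closes in $\Omega(n^{k-3})$ ways has no basis, since $\alpha^*_2(G)<\alpha n$ forces only roughly linearly many edges between two linear sets, not positive density.

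This is exactly why the paper works with robust reachability: for every $W$ with $|W|\le\beta n$, some connector avoids $W$. That yields linearly many \emph{vertex-disjoint} absorbers per transversal $k$-set, and the absorbing set is then assembled with Montgomery's template (Lemma~\ref{31}). For $C_k$ the paper simply imports this as Lemma~\ref{4}.

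Two further points on your Step 1. You leave its genuinely hard part, merging the reachability classes, unresolved. Also, your pigeonhole step only gives, for each side separately, a pair with a large common neighbourhood. Finding a single pair that is good on both sides at once needs a Ramsey-type bound on the number of vertices, not $\lceil k/2\rceil+1$.
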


Notice that if the condition $\alpha^*_2(G) = o(n)$ is removed from Theorem~\ref{thm2}, the minimum degree threshold for the existence of a transversal $C_k$-factor becomes asymptotically ${\delta^*}(G) \geq \left(1 + \frac{1}{k}\right)\frac{n}{2} + o(n)$, as established by Ergemlidze and Molla~\cite{ergemlidze2022transversal}. 
Moreover, the minimum degree bound in Theorem~\ref{thm2} may not be optimal. Actually, Han, Hu, Ping, Wang, Wang, and Yang \cite{Han_Hu_Ping_Wang_Wang_Yang_2024} conjectured that the (asymptotically) tight condition should be $\frac{n}{k}+o(n)$, as suggested by the so-called space barrier.
To see this, consider the following construction. Let $G$ be the complete $n$-blow-up of $C_k$ with parts $V_1, \ldots, V_k$. In each part $V_i$ for $i \in [k]$, select a subset $U_i \subseteq V_i$ of size $n/k - 1$, and let $U = \bigcup_{i \in [k]} U_i$. Remove from $G$ every edge that is not incident to $U$. Then, add a $k$-partite Erd\H{o}s graph (constructed via a random graph process) on the vertex set $V(G) \setminus U$, ensuring that the resulting graph $G'$ satisfies $\alpha^*_2(G') = o(n)$, while $G' - U$ contains no transversal copy of $C_k$.
Now, observe that every transversal $C_k$ in $G'$ must include at least one vertex from $U$, and the minimum partite degree satisfies ${\delta^*}(G') \geq n/k - 1$. However, since $|U| < n$, $G'$ has no transversal $C_k$-factor.

\paragraph{Notation.}
We begin by establishing notation and terminology that will be used throughout this paper. For a graph $G = (V, E)$, we denote $v(G) = |V(G)|$ and $e(G) = |E(G)|$. Given a subset $U \subseteq V$, let $G[U]$ be the subgraph of $G$ induced by $U$, and define $G - U := G[V \setminus U]$. For any two subsets $A, B \subseteq V(G)$, we write $E_G(A, B)$ for the set of edges between $A$ and $B$, and $e_G(A, B) := |E_G(A, B)|$. The neighborhood of a vertex $v$ in $G$ is denoted by $N_G(v)$, and we use $d_G(v)$ to denote the degree of $v$ in $G$ (i.e., the number of edges incident to $v$). We typically omit the subscript $G$ when the graph is clear from context. For integers $a \leq b$, we define $[a, b] := \{i \in \mathbb{Z} : a \leq i \leq b\}$.

When we write $\beta \ll \gamma$, we mean that $\beta$ and $\gamma$ are constants in $(0,1)$ and that there exists $\beta_0 = \beta_0(\gamma)$ such that all subsequent arguments hold for all $0 < \beta \leq \beta_0$. Hierarchies of other lengths are defined similarly.

\paragraph{Organization.}
The remainder of this paper is organized as follows. Section \ref{sec2} contains the proofs of our main results, Theorems \ref{thm1} and \ref{thm2}. In Section \ref{sec3}, we introduce the lattice-based absorbing method. Section \ref{sec4} is devoted to the regularity lemma and the construction of a tiling with specific structures in the reduced multigraph.

\section{Proofs of Theorem \ref{thm1} and \ref{thm2}}\label{sec2}

The proofs of Theorem \ref{thm1} and Theorem \ref{thm2} use the \emph{absorbing method}.
Our proof employs the absorption method and builds upon techniques developed in \cite{Han2016, Han2017Decision, KeevashMycroft2015}. 
The absorption method was originally introduced by R\"{o}dl, Ruci\'{n}ski, and Szemer\'{e}di approximately a decade ago in their seminal work \cite{Rodl2009}. 
Since its introduction, this method has emerged as a fundamental tool for investigating the existence of spanning structures in various combinatorial settings, including graphs, digraphs, and hypergraphs.

Following the standard absorption approach, the key objectives are to (i) define and construct an absorbing structure within the host graph capable of accommodating leftover vertices, and (ii) obtain an almost-perfect tiling that covers most vertices while leaving only a small linear proportion uncovered. Regarding objective (i), we note that the absorbing component of Theorem \ref{thm2} has been established by \cite{Han_Hu_Ping_Wang_Wang_Yang_2024}; consequently, this work focuses exclusively on proving the absorbing part of Theorem \ref{thm1}. To facilitate this, we now introduce several relevant definitions.

\begin{definition}{\rm(Absorber)}
	\rm {Let $F$ be a $k$-vertex graph and $G=(V_{1},\ldots,V_{k},E)$ be  a spanning subgraph of the $n$-blow-up of $F$.
Given a subset $S\subseteq V(G)$ of size $k$
		and an integer $t$, we say that a subset $A_{S}\subseteq V(G) \backslash S$ is an $(F,t)$-\textit{absorber} of $S$ if $\vert A_{S}\vert \leq kt$ and both $G[A_{S}]$ and $G[A_{S}\cup S] $ contain an $F$-factor.}
\end{definition}

\begin{definition}{\rm(Absorbing set)}
    \rm {Let $F$ be a $k$-vertex graph and
    $G=(V_{1},\ldots,V_{k},E)$
		be  a spanning subgraph of the $n$-blow-up of $F$.
	 We say that a balanced subset $R\subseteq V(G)$ is a $\xi$-\textit{absorbing}\textit{ set} for some $\xi>0$ if for every balanced  subset $U\subseteq V(G)\backslash R$ with $\vert U\vert\leq\xi n$,
		$G[R\cup U]$ contains an $F$-factor consisting of transversal $F$-copies.}
\end{definition}

Our first objective is to construct an absorbing set for Theorem~\ref{thm1}.

\begin{lemma}\label{3}
 Given $k \in \mathbb{N}$ with $k\geq3$ and positive constants $\delta,\gamma$ with $\gamma\leq\frac{\delta}{2}$,
	there exist $\alpha, \xi>0 $ such that the following holds for sufficiently large $n \in \mathbb{N} $. 
	Let $G=(V_{1},\ldots,V_{k},E)$ be  a spanning subgraph of the $n$-blow-up of $K_{k}$
	with ${\delta^*}(G)\geq \delta n$ and $\alpha^*_{r}(G)<\alpha n$.
If either of the following conditions is satisfied: 
        \begin{enumerate}
        \item $\delta>\frac{1}{2}$ and $r=k-1$;
        \item $\delta>0$ and $r=k$.
    \end{enumerate}
    
	Then there exists a $\xi$-absorbing set $R\subseteq V(G)$ of size at most $\gamma n$. 
\end{lemma}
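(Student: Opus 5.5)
We follow the lattice-based absorbing method of Han~\cite{Han2016} and Keevash--Mycroft~\cite{KeevashMycroft2015}, as announced for Section~\ref{sec3}. The target is the standard sufficient condition: if every transversal $k$-set $S$ has $\Omega(n^{kt})$ $(K_k,t)$-absorbers for a constant $t$, then the usual random selection yields $R$. Concretely, we pick a random family of $kt$-sets, each included independently with probability $p=\Theta(\gamma n^{1-kt})$. We then delete overlapping pairs and sets that are not absorbers, using Chernoff and Markov bounds. This leaves a disjoint family of absorbers of total size at most $\gamma n$ such that every transversal $S$ has at least $\xi n$ absorbers in the family. A balanced $U$ with $|U|\le \xi n$ is then split arbitrarily into $|U|/k$ transversal $k$-sets, and each is absorbed greedily by a distinct absorber. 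So all the work is in counting absorbers.

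\textbf{Step 1: reachability.} We call vertices $u,v$ in the same part $V_i$ \emph{$(\beta,j)$-reachable} if there are at least $\beta n^{kj-1}$ sets $W$ of size $kj-1$ such that both $G[W\cup\{u\}]$ and $G[W\cup\{v\}]$ have transversal $K_k$-factors. The first goal is to show that each $u$ is $(\beta,1)$-reachable to at least $\epsilon n$ vertices of its own part. Fix $u\in V_1$ and count transversal $K_k$'s containing $u$. In case (2), $N(u)\cap V_j$ has size at least $\delta n$ for each $j\ne 1$. Since $\alpha_k^*<\alpha n$, a supersaturation argument (taking many disjoint subsets of size $\alpha n$) gives $\Omega(n^{k-1})$ copies of $K_{k-1}$ in the neighborhood of $u$. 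In case (1), we use $\delta>1/2$: any two vertices in the same part have $\ge 2\mu n$ common neighbours in every other part. For the count we only need $r=k-1$ holes inside $N(u)$, which already gives many $K_{k-1}$ there and hence many $K_k$ through $u$. Then double counting on $(k-1)$-cliques $Q$ (the vertices $v\in V_1$ with $Q\cup\{v\}$ a clique) yields many $v$ sharing many completions with $u$, which is $1$-reachability.

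\textbf{Step 2: partition and lattice.} Using the reachability closure lemma (merging via transitivity, where $(\beta,j)$ and $(\beta,j')$ give $(\beta',j+j')$), we partition each $V_i$ into at most $1/\epsilon$ classes that are closed, i.e.\ internally $(\beta',c)$-reachable for some constant $c$. Next we show the partition is trivial, meaning each $V_i$ is a single class, or at least that the lattice generated by index vectors of transversal $K_k$'s contains every transversal unit difference $\mathbf{u}_{i,a}-\mathbf{u}_{i,b}$.

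In case (1), if $V_1$ had two classes $C,C'$, each of size $\ge\epsilon n$, we pick $u\in C$ and $v\in C'$. The common neighbourhoods in the other parts have size $\ge2\mu n$, so the hole condition for $r=k-1$ gives many $K_{k-1}$ inside $N(u)\cap N(v)$. This makes $u,v$ $1$-reachable, a contradiction. In case (2) the degree is small, so instead we argue that $\alpha_k^*$ small forces a transversal $K_k$ meeting any prescribed choice of linear-sized classes, one per part. This puts all "cross" index vectors in the lattice, so the lattice is full and the classes merge.

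We expect this step to be the main obstacle, especially case (2), where common neighbourhoods may be empty. There the merging must go through the lattice/transferral argument rather than direct reachability. We would try the Keevash--Mycroft style argument that every class vector is realised robustly.

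\textbf{Step 3: absorbers.} Once every two vertices in the same part are $(\beta',c)$-reachable, we build absorbers for a transversal $S=\{s_1,\dots,s_k\}$ in the standard way. We pick a transversal $K_k$ $T=\{t_1,\dots,t_k\}$ disjoint from $S$, of which there are $\Omega(n^k)$. Then, for each $i$, we choose a reachability set $W_i$ for the pair $(s_i,t_i)$. The absorber is $A_S=T\cup\bigcup_i W_i$. Without $S$, it is tiled by the factors of $W_i\cup\{t_i\}$. With $S$, it is tiled by $T$ together with the factors of $W_i\cup\{s_i\}$. Counting choices gives $\Omega(n^{k+k(kc-1)})$ absorbers of size $k^2c$, so $t=kc$. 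Applying the random selection from the first paragraph completes the proof.
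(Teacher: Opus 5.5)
\textbf{There is a genuine gap, in Step 1, and everything after it depends on that step.} The hole condition $\alpha^*_r(G)<\alpha n$ does \emph{not} give $\Omega(n^{k-1})$ copies of $K_{k-1}$ in $N(u)$, nor $\Omega(n^{kt})$ absorbers for a transversal $S$. A hole condition only guarantees one copy inside every choice of $\alpha n$-sets. Averaging over such choices therefore gives only about $(\delta/\alpha)^{k-1}$ copies, and greedy deletion gives $\Omega(n)$ vertex-disjoint copies --- never a positive density.

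Here is a counterexample in case (2) with $k=3$. Split each $V_i=A_i\cup B_i$ into halves and join $A_i$ completely to $B_j$ for all $i\neq j$. This is bipartite, hence triangle-free, with $\delta^*=n/2$. Now add a random tripartite graph of density $p=n^{-1/2}$.
\begin{itemize}
\item Janson's inequality and a union bound show that with high probability every three $\alpha n$-sets in distinct parts span a triangle, so $\alpha^*_3(G)<\alpha n$.
\item Every transversal triangle must use a random edge, so there are only $O(n^{5/2})$ of them.
\end{itemize}
For case (1) with $k=4$, start instead from the $3$-colourable $4$-partite graph with $\delta^*=2n/3$ (three colour classes in each part, different colours joined), and add the same random graph. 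This gives $o(n^4)$ transversal $K_4$'s.

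In such graphs an absorber of size $ks$ is a union of $s$ disjoint transversal $K_k$'s, so there are only $o(n^{ks})$ absorbers. Your counting notion of $(\beta,j)$-reachability fails for every pair. The R\"odl--Ruci\'nski--Szemer\'edi random selection needs a positive fraction of all $kt$-sets to absorb each $S$, so it has nothing to work with. This is exactly why the paper remarks that it is unclear whether polynomially many absorbers exist. Your Step 2 for case (2) is also left open, but it turns out to be unnecessary.

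\textbf{The missing idea is to use robustness instead of counting throughout.} Call $u,v\in V_1$ reachable if for \emph{every} $W$ with $|W|\le\beta n$ there is a bounded-size connector outside $W$.

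\emph{Case (1).} For each $j\neq1$,
$$|N_{V_j}(u)\cap N_{V_j}(v)\setminus W|\ge(2\delta-1-\beta)n>\alpha n.$$
So $\alpha^*_{k-1}(G)<\alpha n$ yields a transversal $K_{k-1}$ there, which is a connector of size $k-1$. Each part is therefore closed outright, with no partition or lattice needed.

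\emph{Case (2).} Here $N(u)\cap N(v)$ may be empty. For $j\neq1$, choose disjoint sets $D_{j,1}\subseteq N_{V_j}(u)\setminus W$ and $D_{j,2}\subseteq N_{V_j}(v)\setminus W$, each of size $\alpha n$. Since $\alpha^*_k(G)<\alpha n$, all but fewer than $2\alpha n$ vertices $w\in V_1$ lie in two transversal copies of $K_k$: one $\{w,y_2,\dots,y_k\}$ with $y_j\in D_{j,1}$, and one $\{w,z_2,\dots,z_k\}$ with $z_j\in D_{j,2}$. Pick such a $w\notin W\cup\{u,v\}$. Then $C=\{w,y_2,\dots,y_k,z_2,\dots,z_k\}$ is a connector of size $2k-1$:
\begin{itemize}
\item $C\cup\{u\}$ is tiled by $\{u,y_2,\dots,y_k\}$ and $\{w,z_2,\dots,z_k\}$;
\item $C\cup\{v\}$ is tiled by $\{v,z_2,\dots,z_k\}$ and $\{w,y_2,\dots,y_k\}$.
\end{itemize}

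With this robust reachability, your Step 3 construction $T\cup\bigcup_i W_i$ greedily gives linearly many \emph{vertex-disjoint} absorbers for each transversal $S$. The absorbing set is then assembled with Montgomery's robust bipartite template (Lemma~\ref{31}) rather than by random sampling. This is precisely the paper's proof.
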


For Theorem~\ref{thm2}, we construct an absorbing set using the result of Han, Hu, Ping, Wang, Wang, and Yang~\cite{Han_Hu_Ping_Wang_Wang_Yang_2024}, as follows.

\begin{lemma}\label{4}{\rm(\cite[Lemma~4.2]{Han_Hu_Ping_Wang_Wang_Yang_2024})}
	 Given $k \in \mathbb{N}$ with $k\geq4$ and positive constants $\delta,\gamma$ with $\delta>\frac{2}{k}$ and $\gamma\leq\frac{\delta}{2}$,
	there exist $\alpha, \xi>0 $ such that the following holds for sufficiently large $n \in \mathbb{N} $. 
	Let $G=(V_{1},\ldots,V_{k},E)$ be  a spanning subgraph of the $n$-blow-up of $C_{k}$
	with ${\delta^*}(G)\geq \delta n$ and $\alpha^*_{2}(G)<\alpha n$. 
	Then there exists a $\xi$-absorbing set $R\subseteq V(G)$ of size at most $\gamma n$. 
\end{lemma}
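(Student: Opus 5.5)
The plan is to follow the lattice-based absorbing method of Han~\cite{Han2016,Han2017Decision}, adapted to the partite setting as in \cite{Han_Hu_Ping_Wang_Wang_Yang_2024}. It has two stages. First we show that any two vertices lying in the same part are ``reachable'' from each other in a robust sense. Then a standard random selection of absorbers produces the absorbing set $R$.

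\textbf{Reachability.} For $u,v\in V_i$, say that $u,v$ are \emph{$(\beta,t)$-reachable} if there are at least $\beta n^{kt-1}$ sets $W\subseteq V(G)\setminus\{u,v\}$ of size $kt-1$ such that both $G[W\cup\{u\}]$ and $G[W\cup\{v\}]$ contain a transversal $C_k$-factor. The main tool is the observation that $\alpha^*_2(G)<\alpha n$ gives strong expansion along the cycle. If $A\subseteq V_j$ has $|A|\ge\alpha n$, then all but fewer than $\alpha n$ vertices of $V_{j+1}$ have a neighbour in $A$. By supersaturation, any two sets of size $\Omega(n)$ in adjacent parts span $\Omega(n^2)$ edges. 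Starting from $N(u)\cap V_{i+1}$, which has size at least $\delta n$, we walk around the cycle $V_{i+1},V_{i+2},\dots,V_{i-1}$ and close up in $N(u)\cap V_{i-1}$. This shows that every vertex lies in $\Omega(n^{k-1})$ transversal copies of $C_k$, and more generally that we can count transversal paths with prescribed ends in linear-size sets.

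To make $u$ and $v$ reachable, we build the following gadget. Take a transversal $(k-1)$-path $P$ from $N(u)\cap V_{i+1}$ to $N(u)\cap V_{i-1}$, and a second path $P'$ arranged so that both $\{u\}\cup P$, $\{v\}\cup P'$ and the ``swapped'' configuration tile. Concretely, the gadget chains a few transversal cycles that exchange $u$ for $v$ through intermediate vertices $w\in V_i$ adjacent to both sides. Counting these gadgets via the expansion property should give $\beta n^{kt-1}$ choices for a bounded $t$.

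\textbf{Partition and merging.} I do not expect a direct argument to give reachability for every pair, so the next step is to partition each $V_i$ into closed classes. Two vertices are $(\beta,t)$-reachable within a class, and since every vertex has linear degree $\delta n$, each part splits into at most about $1/\delta$ classes. I then consider the lattice of index vectors of transversal $C_k$-copies with respect to this partition and show it contains all ``transferral'' vectors $\mathbf{u}_{P}-\mathbf{u}_{P'}$ between classes in the same part. This merging step is where $\delta>2/k$ enters. The condition ensures that for two classes $P,P'\subseteq V_i$, the neighbourhoods of typical vertices in $P$ and in $P'$ inside $V_{i+1}$ (or $V_{i-1}$) overlap in a linear-size set, or can be linked via expansion in at most $k$ steps around the cycle. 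This yields cycles differing only in the $V_i$-vertex and hence forces the classes to merge.

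I expect the merging/transferral step to be the main obstacle. The first thing I would try is to use the $k$ parts cyclically: summing the degree conditions over $V_{i+1},\dots$ with $\delta>2/k$ should force a common linear-size target after going around the cycle. This in turn gives a single closed class per part, with $t$ bounded in terms of $k$ and $\delta$.

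\textbf{Building $R$.} Once every two vertices of the same part are $(\beta,t)$-reachable, each transversal $k$-set $S$ has $\Omega(n^{k^2t})$ absorbers of bounded size. These are obtained by taking a transversal $C_k$ $T$ disjoint from $S$ and, for each $j$, a reachability set $W_j$ for the pair $s_j,t_j\in V_j$. We choose a random family of such absorbers, each included with probability of order $\gamma n^{1-k^2t}$. Chernoff bounds and deletion of overlapping pairs then give a family of total size at most $\gamma n$ in which every $S$ has $\Omega(n)$ disjoint absorbers. Their union $R$ is balanced and contains a transversal $C_k$-factor.

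Finally, let $U$ be balanced with $|U|\le\xi n$, where $\xi\ll\beta$. We partition $U$ into transversal $k$-sets and absorb them one at a time using distinct absorbers. This shows that $R$ is $\xi$-absorbing.
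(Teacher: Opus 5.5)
Your argument fails at the counting step, and everything built on it falls with it: reachability via $\beta n^{kt-1}$ connectors, $\Omega(n^{k^2t})$ absorbers per $S$, and the random selection of $R$.

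The claim that two linear-size sets in consecutive parts span $\Omega(n^2)$ edges is false. The condition $\alpha^*_2(G)<\alpha n$ only forbids empty $\alpha n\times\alpha n$ pairs, and a random bipartite graph of density $C\log n/(\alpha n)$ already has this property with only $O(n\log n/\alpha)$ edges.

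This cannot be patched. For $k\ge 5$, split each $V_j$ into halves $V_j^1,V_j^2$. Join $V_j^a$ completely to $V_{j+1}^a$ for $j<k$, and $V_k^a$ completely to $V_1^{3-a}$. Then add such a sparse random graph between any two consecutive parts. Now $\delta^*(G)\ge n/2>2n/k$ and $\alpha^*_2(G)<\alpha n$. But following complete edges once around the cycle switches halves, so every transversal $C_k$ must use one of the sparse edges. Consequently:
\begin{itemize}
\item $G$ has only $n^{k-1+o(1)}$ transversal copies of $C_k$, and each vertex lies in $n^{k-2+o(1)}$ of them, not $\Omega(n^{k-1})$.
\item Any usable absorber contains a transversal $C_k$-factor, so every $S$ has at most $n^{k^2t-kt+o(1)}$ absorbers of size $k^2t$.
\item With your inclusion probability $\gamma n^{1-k^2t}$, the expected number of absorbers of a fixed $S$ in the family is at most $n^{1-kt+o(1)}\to 0$.
\end{itemize}
So the final step of absorbing $U$ one $k$-set at a time has nothing to use. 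This is exactly the obstruction the paper means when it says that polynomially many absorbers may not exist in this setting.

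The missing idea is to rely only on existence statements, which the hole condition does supply robustly. Examples are Proposition~\ref{20021117}, or your correct remark that an $\alpha n$-set sees all but $\alpha n$ vertices of the next part. This is the route the paper relies on: it imports the lemma from \cite{Han_Hu_Ping_Wang_Wang_Yang_2024} and runs the same scheme for $K_k$ in Lemmas~\ref{7} and~\ref{3}.

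In that scheme, reachability is robust: for every $W$ with $|W|\le\beta n$ there is a connector outside $W$. Closedness of each $V_i$ in this sense gives, greedily, linearly many \emph{vertex-disjoint} absorbers for every transversal $k$-set. It does not give a positive proportion of all $k^2t$-sets. A single family of $O(n)$ absorbers cannot serve all $\Theta(n^k)$ transversal $k$-sets. So $R$ must be assembled with Montgomery's robust bipartite template (Lemma~\ref{31}): absorbers are attached only to template edges, and the leftover $U$ is first covered by $C_k$-fans into a random flexible set $X$.

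Separately, your closedness step is only a heuristic. The gadget is never specified. Since $\delta$ may be below $1/2$, two vertices of $V_i$ can have disjoint neighbourhoods in both $V_{i-1}$ and $V_{i+1}$. You still need a genuine multi-step connector, or a real merging argument, showing how $\delta>2/k$ forces a single class. It must be carried out in the robust existential form rather than by counting.
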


Lemmas~\ref{5} and~\ref{6} yield an almost perfect transversal $K_k$-tiling and an almost perfect transversal $C_k$-tiling, respectively.

\begin{lemma} \label{5}
	{\rm(Almost perfect tiling for Theorem \ref{thm1})}.
 Given $k \in \mathbb{N}$ with $k\geq3$ and positive constants $\delta,\zeta$ with $\delta>\frac{1}{2}$,
	there exists $\alpha>0 $ such that the following holds for sufficiently large $n \in \mathbb{N} $. 
	Let $G=(V_{1},\ldots,V_{k},E)$ be  a spanning subgraph of the $n$-blow-up of $K_{k}$
	with ${\delta^*}(G)\geq \delta n$ and $\alpha^*_{k-1}(G)<\alpha n$. 
    Then $G$ contains a transversal $K_{k}$-tiling covering all but at most $\zeta n$ vertices.
\end{lemma}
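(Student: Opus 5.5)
We plan to prove Lemma~\ref{5} with the regularity method. The idea is to find a fractional transversal $K_k$-tiling in the reduced graph and then convert each weighted piece into many genuine transversal copies of $K_k$ in $G$. Roughly, the minimum degree lets us find, in the cluster graph, structures in which one edge is \emph{dense} and the remaining $k-2$ parts see it with density well above $1/2$. The condition $\alpha^*_{k-1}(G)<\alpha n$ then supplies transversal $K_{k-1}$'s inside common neighbourhoods, so no $K_k$ in the reduced graph is needed.

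First we apply the multipartite degree form of the regularity lemma, with constants $1/n\ll\alpha\ll\varepsilon\ll d\ll\zeta,\mu$, where $\delta=\frac12+\mu$. This gives clusters refining each $V_i$, all of equal size $m$, and exceptional sets of size at most $\varepsilon n$. The reduced $k$-partite graph $\Gamma$ has parts $\mathcal V_1,\dots,\mathcal V_k$ with $|\mathcal V_i|=t$. Each pair is either $\varepsilon$-regular with density at least $d$ or empty, and $\delta^*(\Gamma)\ge(\frac12+\mu/2)t$.

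\textbf{Embedding lemma.} Take an $\varepsilon$-regular pair between clusters in any $k-1$ of the parts. Because $\alpha^*_{k-1}(G)<\alpha n$ and $\alpha\ll\varepsilon m/n$, any $k-1$ sets of size $\varepsilon m$, one in each of $k-1$ distinct clusters, span a transversal $K_{k-1}$. Greedy deletion then shows the following. If clusters $C_1\in\mathcal V_1,\dots,C_k\in\mathcal V_k$ satisfy that $C_k$ forms a regular pair of density at least $d$ with every $C_i$, then their union has a transversal $K_k$-tiling covering all but $O(\varepsilon)m$ vertices in each cluster. The mechanism: a typical $v\in C_k$ has at least $(d-\varepsilon)m$ neighbours in each $C_i$. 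Inside these neighbourhoods, repeated use of the hole condition extracts a $K_{k-1}$, which we remove before moving on, as long as at least $\varepsilon m$ vertices remain in each neighbourhood. This holds for any edge pattern among $C_1,\dots,C_{k-1}$, since the hole condition alone creates the $K_{k-1}$. So the structure we need in $\Gamma$ is only a \emph{star}: one cluster $C_k$ with $\Gamma$-edges to one cluster in each other part. By shrinking clusters in proportion, we can likewise realise weighted stars, where the centre cluster is shared among several stars.

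The problem therefore becomes a fractional problem in $\Gamma$. We want nonnegative weights on transversal stars (a centre in some part $\mathcal V_j$, plus one neighbour in each other part) that are balanced and cover total weight at least $(1-\zeta/2)$ of every cluster, with each star contributing equally to its $k$ clusters. The flexibility that matters is that centres may lie in different parts. Under $\delta^*(\Gamma)>(\frac12+\mu/2)t$, every cluster has a neighbourhood larger than half of each other part. We will construct the fractional tiling with a linear programming / Farkas argument: if no such near-perfect fractional cover exists, there is a weight function $w$ on clusters with $\sum w<0$ but $\sum_{C\in S}w(C)\ge 0$ for every transversal star $S$. We then derive a contradiction by averaging. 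Pair each cluster $C$ with its neighbours: for every $C$ and every part $i$, choose a neighbour of $C$ in $\mathcal V_i$ minimising $w$. The half-plus-$\mu$ degree lets us match the negative-weight clusters of one part into the neighbourhoods of others, in the style of Hall's theorem, which forces $\sum w\ge 0$. A simpler alternative is to combine an explicit matching argument with a greedy rebalancing. First find, via K\"onig/Hall and the $>t/2$ degrees, a perfect matching between every two parts of $\Gamma$. Then take stars centred in $\mathcal V_k$ along a system of such matchings, and rotate the roles of the centre part so that each cluster is a centre in a $1/k$ fraction of its weight. We expect this fractional star-cover step to be the main obstacle. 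The embedding step is routine, and converting the fractional tiling back into $G$ by splitting clusters into subclusters of sizes proportional to the weights, with regularity inherited from slicing, is standard. The uncovered vertices (exceptional sets, $O(\varepsilon)$ losses and rounding) total at most $\zeta n$.
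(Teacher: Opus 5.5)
Your proposal is correct and follows essentially the paper's route. Both use regularity, then a perfect $K_{1,k-1}$-tiling of the reduced graph, then embed transversal $K_k$'s inside each star by picking a typical centre vertex and applying $\alpha^*_{k-1}(G)<\alpha n$ to its neighbourhoods in the $k-1$ leaf clusters.

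The step you flag as the main obstacle is immediate and needs no fractional weights, Farkas argument or rotation of centres. Since $\delta^*(\Gamma)>t/2$, Hall's theorem gives perfect matchings between $\mathcal{V}_k$ and each $\mathcal{V}_i$, and the resulting $t$ stars centred in $\mathcal{V}_k$ are disjoint and cover $V(\Gamma)$. Each transversal $K_k$ uses exactly one vertex from each of its star's $k$ clusters, so there is no imbalance to correct.
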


\begin{lemma} \label{6}
{\rm(Almost perfect tiling for Theorem \ref{thm2})}.
 Given $k \in \mathbb{N}$ with $k\geq4$ and positive constants $\delta,\zeta$ with $\delta>\frac{2}{k}$,
	there exists $\alpha>0 $ such that the following holds for sufficiently large $n \in \mathbb{N} $. 
	Let $G=(V_{1},\ldots,V_{k},E)$ be  a spanning subgraph of the $n$-blow-up of $C_{k}$
	with ${\delta^*}(G)\geq \delta n$ and $\alpha^*_{2}(G)<\alpha n$. 
	Then $G$ contains a transversal $C_{k}$-tiling covering all but at most $\zeta n$ vertices.

\end{lemma}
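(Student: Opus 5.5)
We prove Lemma~\ref{6} with the regularity method: we find a suitable fractional tiling in the reduced graph and then embed many transversal $C_k$-copies inside regular tuples, using $\alpha^*_2(G)=o(n)$ to supply the edges the degree condition alone cannot give. Fix $\frac{1}{n}\ll\alpha\ll\varepsilon\ll d\ll\zeta,\delta-\frac2k$.

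\textbf{Regularity.} We apply the multipartite degree form of the regularity lemma to $G$, refining the parts $V_1,\dots,V_k$. Each $V_i$ is split into clusters $V_i^1,\dots,V_i^m$ of equal size $\ell$, plus an exceptional set. We then form the reduced $k$-partite graph $\Gamma$ on the cluster classes $W_1,\dots,W_k$, where $|W_i|=m$. An edge $V_i^aV_{i+1}^b$ is placed in $\Gamma$ when the pair is $\varepsilon$-regular with density at least $d$, and we take indices modulo $k$. Standard inheritance gives $\delta^*(\Gamma)\ge(\delta-2d)m>(\frac2k+\frac\eta2)m$ for some $\eta>0$.

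\textbf{Key observation.} Inside a regular pair we cannot hope to find the missing structure directly. Instead, take clusters $X$ and $Y$ in consecutive parts $V_i,V_{i+1}$, even if they are not adjacent in $\Gamma$. Every pair of subsets of size $\alpha n$ in $X\times Y$ spans an edge, because $\alpha^*_2(G)<\alpha n$.

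So a transversal $C_k$ in $G$ can be built from two ingredients. The first is a path $P$ in $\Gamma$ through $W_{i+1},W_{i+2},\dots,W_{i-1},W_i$ that covers $k-1$ of the cyclic edges. The second is the one closing edge between $W_i$ and $W_{i+1}$, which is supplied by the small-hole condition instead of by regularity.

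This suggests the notion of a \emph{transversal path gadget} in $\Gamma$: a collection of $k$ clusters, one in each $W_j$, spanning a path of length $k-1$ following the cyclic order. Here is how we embed copies greedily inside such a gadget. By regularity, all but $\varepsilon\ell$ vertices in the endpoint clusters extend to many paths. Applying the standard "typical vertices" argument together with $\alpha^*_2<\alpha n$ to the sets of endpoints, we close the path and obtain a $C_k$. We repeat while at least $\zeta\ell/2$ vertices remain in each cluster.

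In this way each gadget, possibly used fractionally, is almost perfectly tiled.

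\textbf{Fractional tiling in $\Gamma$.} The heart of the proof is to show that $\Gamma$ has a fractional tiling by path gadgets of total weight at least $(1-\zeta/2)m$, with each cluster having weight at most $1$. By LP duality, it suffices to show that any weighting $w:V(\Gamma)\to[0,1]$ whose total is less than $(1-\zeta/2)m$ is hit with weight less than $1$ by some gadget.

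Alternatively, and more constructively, we take a maximum fractional gadget tiling and argue by augmenting paths. Suppose many clusters, say in $W_j$, are left uncovered. The degree condition $\delta^*>\frac2k m$ lets an uncovered cluster $X\in W_j$ send edges into both $W_{j-1}$ and $W_{j+1}$. The neighbourhoods have size more than $\frac2k m$ on each side. Pigeonholing over the $k$ gadget positions, we find a gadget whose two clusters adjacent to positions $j\pm1$ can be rerouted. We split that gadget into pieces and recombine them with uncovered clusters, using at most $k$ (hence boundedly many) existing gadgets, so that the total weight strictly increases.

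The threshold $\frac2k$ is exactly what makes this counting work. The sum over the $k$ gadget positions of the neighbourhood densities exceeds $2$, which forces two "good" positions within a single gadget. The extra closing edge being free is why $\frac2k$, rather than the degree needed to find genuine cycles in $\Gamma$, suffices.

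\textbf{Main obstacle.} I expect this augmentation step to be the hardest part, because the parity and cyclic-shift cases differ according to whether $k$ is even or odd. Removing the restriction $k\in4\mathbb{N}$ from \cite{Han_Hu_Ping_Wang_Wang_Yang_2024} happens precisely here. I would first try to handle odd $k$ using two gadgets whose open edges sit at different positions.

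\textbf{Conclusion.} Finally, we convert the fractional tiling to an integral one: split each cluster in proportion to its weights and apply the embedding step to each piece. This leaves at most $(\zeta/2+\varepsilon+k d)n\le\zeta n$ vertices uncovered, and each part loses the same number of vertices, since every copy is transversal.
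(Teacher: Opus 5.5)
There is a genuine gap, and it sits in the step you call the heart of the proof. You claim that $\delta^*(\Gamma)>(\frac2k+\frac\eta2)m$ yields a fractional tiling of $\Gamma$ by path gadgets (transversal paths with $k-1$ edges) of total weight $(1-\zeta/2)m$. This is false for every $k\ge 6$ and small $\zeta$.

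Fix $A_i\subseteq W_i$ with $|A_i|=am$, and let $\Gamma$ consist of all edges between consecutive classes that have an endpoint in $A=\bigcup_i A_i$. Then $\delta^*(\Gamma)=am$. In every path gadget, however, the clusters from $A$ form a vertex cover of a path on $k$ vertices, so each gadget contains at least $\lfloor k/2\rfloor$ of them. In your LP formulation, put $w=1/\lfloor k/2\rfloor$ on $A$ and $w=0$ elsewhere. Every gadget then has weight at least $1$, while the total weight $akm/\lfloor k/2\rfloor$ is below $(1-\zeta/2)m$ whenever $\frac2k<a<(1-\frac\zeta2)\lfloor k/2\rfloor/k$.

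This is not an artefact of the reduction. Take $A_i'\subseteq V_i$ of size $an$, join $A_i'$ completely to $V_{i-1}\cup V_{i+1}$, and put a random bipartite graph of density $n^{-1/2}$ between $V_i\setminus A_i'$ and $V_{i+1}\setminus A_{i+1}'$. With high probability this $G$ has $\delta^*(G)\ge an$ and $\alpha^*_2(G)<\alpha n$. A regular partition refining $\{A_i',V_i\setminus A_i'\}$ produces essentially the $\Gamma$ above, since the random pairs have density far below $d$. So no augmenting-path or pigeonhole argument can work. The heuristic that one free closing edge brings the threshold down to $\frac2k$ is also wrong: for path gadgets the space barrier sits at $\lfloor k/2\rfloor/k\approx\frac12$. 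Your embedding step inside a gadget is fine; the problem is what you ask $\Gamma$ to contain.

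The missing idea is that $\alpha^*_2(G)<\alpha n$ supplies whole transversal paths, not just one edge. By Proposition~\ref{20021117}, sets of size at least $2\alpha n$ in consecutive parts (at least $\alpha n$ for the two end sets) always contain a transversal path through them. Hence $\Gamma$ only needs to provide two edges per copy.

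The paper tiles $\Gamma$ with transversal copies of $P^*_3$ (a path on three clusters in consecutive classes plus $k-3$ isolated clusters) and $M_2$ (two disjoint edges on non-overlapping pairs of consecutive classes plus $k-4$ isolated clusters). Claim~\ref{1117} shows that a maximum such tiling is perfect, by double counting the edges between covered clusters and the uncovered set $L$. Each copy receives at most $4|L|/k$ edges from $L$ (Fact~\ref{0485}, a case analysis of local exchanges). On the other hand, $\delta^*(\Gamma)>\frac2k m$ forces strictly more on average, and this is where $\frac2k$ comes from.

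The embedding (Claim~\ref{1118}) then works as follows. For $P^*_3$, take a typical vertex $v$ of the middle cluster and join its two neighbourhoods by a Proposition~\ref{20021117} path around the remaining classes. For $M_2$, take typical vertices $u,v$ of the two regular pairs, join them by such a path on one side, and close the cycle on the other side through $N(u)$ and $N(v)$. Note that the covering bound which kills your approach only demands $|A|\ge m$ for $\{P^*_3,M_2\}$-tilings, since a $P^*_3$ centred in $A$ uses a single cluster of $A$.
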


Now we are ready to prove  Theorem \ref{thm1} and Theorem \ref{thm2}.

\begin{proof}[Proof of Theorem \ref{thm1}]
	Given a constant $\delta$, we set $\eta:=\delta-\frac{1}{2}$ and
	choose
	$\frac{1}{n}\ll\alpha\ll\zeta\ll\xi\ll\gamma\ll\eta,\delta$.  
	Let $G=(V_{1},\ldots,V_{k},E)$ be  a spanning subgraph of the $n$-blow-up of $K_{k}$ with ${\delta^*}(G)\geq \delta n$ and $\alpha^*_{r}(G)<\alpha n$. 
	
	For case (1), we have $\delta>\frac{1}{2}$, $r=k-1$. By Lemma \ref{3} case (1) and 
	the choice that $\gamma \ll\eta,\delta$, there exists a $\xi$-absorbing set $R\subseteq V(G)$ of size at most $\gamma n$ for some $\xi>0$.
	Let $G^{\prime}:=G-R$ and note that $G^\prime$ is a spanning subgraph of $(n-\frac{\vert R\vert}{k})$-blow-up of $K_k$.
	Then we have 
    \begin{equation}
        {\delta^*}(G^{\prime})\geq \left(\frac{1}{2}+\eta\right) n-\frac{\gamma n}{k}\geq\left(\frac{1}{2}+\frac{\eta}{2} \right)
		n.\nonumber
    \end{equation}

	Therefore by applying Lemma \ref{5} on $G^{\prime}$, we obtain a transversal $K_{k}$-tiling $\mathcal{M}$ that covers all but a set $U$ of at most $\zeta n$ vertices
	in $G^{\prime}$. Since $\zeta\ll\xi$, the absorbing property of $R$ implies that
	$G[R\cup U]$ contains a transversal $K_{k}$-factor, which together with  $\mathcal{M}$  forms
	a transversal $K_{k}$-factor in $G$.

For case (2), we have $\delta>0$, $r=k$. By Lemma \ref{3} case (2) and 
	the choice that $\gamma \ll\eta,\delta$, there exists a $\xi$-absorbing set $R\subseteq V(G)$ of size at most $\gamma n$ for some $\xi>0$.
	Let $G^{\prime}:=G-R$ and note that $G^\prime$ is a spanning subgraph of $(n-\frac{\vert R\vert}{k})$-blow-up of $K_k$.
	Then we have 
    \begin{equation}
        {\delta^*}(G^{\prime})\geq \delta n-\frac{\gamma n}{k}\geq \frac{\delta}{2}
		n.\nonumber
    \end{equation}
    
Note that obtain an almost perfect tiling is almost trivial: by the definition of $\alpha^*_{k}(G)$, we can greedily pick as many vertex-disjoint transversal copies of $K_k$ as possible. The remaining subgraph is $K_k$-free and thus it has at most $k\alpha^*_{k}(G)\leq\zeta n$ vertices, denoted as $U$. Since $\zeta\ll\xi$, the absorbing property of $R$ implies that
	$G[R\cup U]$ contains a transversal $K_{k}$-factor, which together with  $\mathcal{M}$  forms
	a transversal $K_{k}$-factor in $G$.  
\end{proof}

\begin{proof}[Proof of Theorem \ref{thm2}]
	Given $k \in \mathbb{N}$ and a constant $\delta > \frac{2}{k}$, we define $\eta := \delta - \frac{2}{k}$ and choose parameters such that $\frac{1}{n} \ll \alpha \ll \zeta \ll \xi \ll \gamma \ll \eta, \delta.$
Let $G = (V_1, \ldots, V_k, E)$ be a spanning subgraph of the $n$-blow-up of $C_k$ satisfying ${\delta^*}(G) \geq \left( \frac{2}{k} + \eta \right) n$ and $\alpha^*_2(G) < \alpha n$.

By Lemma~\ref{4} and since $\gamma \ll \eta, \delta$, there exists a $\xi$-absorbing set $R \subseteq V(G)$ of size at most $\gamma n$ for some $\xi > 0$. 
Define $G' := G - R$, and note that $G'$ is a spanning subgraph of $\left(n - \frac{|R|}{k}\right)$-blow-up of $C_k$. 
We then have
\[
{\delta^*}(G') \geq \left( \frac{2}{k} + \eta \right) n - \frac{\gamma n}{k} \geq \left( \frac{2}{k} + \frac{\eta}{2} \right) n.
\]
Applying Lemma~\ref{6} to $G'$, we obtain a transversal $C_k$-tiling $\mathcal{B}$ that covers all but a set $U$ of at most $\zeta n$ vertices in $G'$. 
Since $\zeta \ll \xi$, the absorbing property of $R$ ensures that $G[R \cup U]$ contains a transversal $C_k$-factor. 
Combining this with $\mathcal{B}$ yields a transversal $C_k$-factor in $G$.
\end{proof}

\section{Absorbing sets}\label{sec3}

In this section, we prove Lemma~\ref{3} using the absorption method. A standard approach in absorption arguments for $F$-factors is to demonstrate that every $k$-set (where $k := |V(F)|$) admits polynomially many absorbers (see~\cite{Han_Hu_Ping_Wang_Wang_Yang_2024}). However, it remains uncertain whether this property holds in our context. We therefore employ an alternative construction that ensures the existence of a $\xi$-absorbing set under the weaker condition that \emph{every} $k$-set $S$ has \emph{linearly many vertex-disjoint $(F,t)$-absorbers}. This approach builds upon a key insight originally due to Montgomery~\cite{montgomery2019spanning}, which has subsequently been applied in numerous absorption-based proofs. Furthermore, a recent method developed by Nenadov and Pehova~\cite{nenadov2020ramsey} guarantees an absorbing set whenever every $k$-set possesses linearly many vertex-disjoint absorbers. Since the host graph in Lemma~\ref{3} is $k$-partite, our objective reduces to showing that every transversal $k$-set admits linearly many vertex-disjoint absorbers. To establish this, we will utilize the \emph{lattice-based absorbing method} introduced by Han~\cite{Han2017Decision}.

\subsection{obtaining absorbers}
To present the lattice-based absorbing method, we first introduce the necessary definitions and key lemmas that will be used in our analysis.

\begin{definition}
\rm{
Let $G$ and $F$ be as above, and let $m$, $t$ be positive integers. We say that two vertices $u, v \in V(G)$ are \textit{{$(F, m, t)$-reachable}} in $G$ if for every set $W \subseteq V(G)$ of size at most $m$, there exists a set $S \subseteq V(G) \setminus W$ with $|S| \leq kt - 1$ such that both $G[\{u\} \cup S]$ and $G[\{v\} \cup S]$ contain $F$-factors, where we call such $S$ an \emph{$F$-connector} for $u$, $v$. Moreover, a subset $U \subseteq V(G)$ is called \emph{$(F, m, t)$-closed} if every pair of vertices in $U$ is $(F, m, t)$-reachable in $G$ (note that the corresponding $F$-connectors are not required to be contained in $U$).
}
\end{definition}

The following result provides a sufficient condition under which every transversal $k$-set has a linear number of vertex-disjoint absorbers.
\begin{lemma}\label{7}
	Given a positive integer $k\geq3$ and a constant $\delta$,
	there exist $\alpha,\beta>0 $ such that the following holds for sufficiently large $n \in \mathbb{N} $.
	Let $G=(V_{1},\ldots,V_{k},E)$ be  a spanning subgraph of the n-blow-up of $K_{k}$ with ${\delta^*}(G)\geq \delta n$ and $\alpha^*_{r}(G)<\alpha n$. If either of the following conditions is satisfied:
    \begin{enumerate}
    \item  $\delta>\frac{1}{2}$ and $r=k-1$;
     \item  $\delta>0$ and $r=k$.
    \end{enumerate}
Then every transversal $k$-set in $G$ has at least $ \frac{\beta n-k}{4k^2}$
	vertex-disjoint $(K_{k},2k)$-absorbers. 
\end{lemma}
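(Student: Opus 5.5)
We follow the lattice-based absorbing method of Han~\cite{Han2017Decision}. The plan is to show that each part $V_i$ is $(K_k,\beta n,t)$-closed for a suitable $t$; here "closed" means closed in the partite sense, i.e.\ connectors are required to be balanced apart from the one missing vertex in $V_i$. From this closedness we then build absorbers of bounded size greedily. The constant $\frac{\beta n-k}{4k^2}$ comes out of a greedy disjointness argument: each absorber we build uses at most $2k\cdot k\cdot 2$ vertices (roughly $4k^2$). So once we know that, for every set $W$ of size at most $\beta n$, the set $S$ has an absorber avoiding $W\cup S$, we pick absorbers one at a time, adding each chosen absorber to $W$. This continues until $|W|$ would exceed $\beta n$, which yields $\frac{\beta n-k}{4k^2}$ disjoint absorbers.

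\textbf{Step 1 (reachability from many common transversal cliques).} Two vertices $u,v\in V_i$ are $(K_k,\beta n,1)$-reachable if, after deleting any $W$ with $|W|\le \beta n$, there is a transversal $(k-1)$-set $T\subseteq \bigcup_{j\ne i}V_j\setminus W$ such that both $T\cup\{u\}$ and $T\cup\{v\}$ are cliques.

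In case (2), $r=k$, we use the degree condition. The sets $N(u)\cap V_j$ and $N(v)\cap V_j$ each have size at least $\delta n$, but they need not intersect. So we instead aim for reachability in 2 steps via an intermediate vertex $w$. Consider $N(u)\cap V_j$ for $j\neq i$: these form $k-1$ sets of size at least $\delta n-\beta n\ge\alpha n$. Together with $V_i\setminus W$ they form $k$ sets of linear size. Since $\alpha^*_k(G)<\alpha n$, this yields transversal $K_k$'s through neighbours of $u$ in $V_i$, and similarly for $v$. Chaining these two-step connections gives closedness with $t=2$.

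In case (1), $r=k-1$ and $\delta>\frac12$, so $|N(u)\cap N(v)\cap V_j|\ge (2\delta-1)n$ for every $j\ne i$. Applying $\alpha^*_{k-1}(G)<\alpha n$ to these common neighbourhoods (minus $W$) in the $k-1$ parts $V_j$, $j\ne i$, gives a transversal $K_{k-1}$ inside them. This is exactly an $F$-connector with $t=1$. Hence every $V_i$ is closed directly, and this is the reason for the threshold $\frac12$.

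\textbf{Step 2 (building the absorber).} Let $S=\{s_1,\dots,s_k\}$ be transversal with $s_i\in V_i$. Avoiding $W\cup S$, first find a transversal $K_k$ copy $T=\{t_1,\dots,t_k\}$ with $t_i\in V_i$. Such a copy exists by the hole condition, since $\alpha^*_r<\alpha n$ implies there is a transversal $K_k$ among any large sets: extend a $K_{r}$ inside common neighbourhoods, using the degree condition in case (1). Next, for each $i$ take a connector $C_i$ for the pair $s_i,t_i$, with all the $C_i$ disjoint from each other and from everything chosen so far. This is possible by closedness, because the forbidden set has size at most $\beta n + O(k^2)$. Set $A_S=T\cup\bigcup_i C_i$. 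Then $G[A_S]$ has a $K_k$-factor consisting of $T$ together with the factors of $C_i\cup\{t_i\}$. Also $G[A_S\cup S]$ has a $K_k$-factor consisting of the factors of $C_i\cup\{s_i\}$, which together cover $T$. All copies are transversal, since the connectors are balanced-minus-one. The size is at most $k+k(2k\cdot k-1)$. This is where we should be careful to match the bound $|A_S|\le k\cdot 2k$ implicit in the definition of a $(K_k,2k)$-absorber. If needed, we count $t$ per connector as $2$ and note that each $C_i$ has size at most $2k-1$, giving $|A_S|\le k+k(2k-1)=2k^2$.

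\textbf{Main obstacle.} The hardest part is case (2) of Step 1, where $\delta$ may be tiny and no common neighbourhood is guaranteed. Our first attempt is to show that the relation "reachable with $t=1$" on $V_i$ has few equivalence-like classes: each vertex reaches at least $\Omega(n)$ others, by applying the $k$-hole condition to the neighbourhoods of $u$ and a set of candidate partners. Then $2$-step reachability connects everything, because any two large reach-sets in $V_i$ must yield a common transversal clique, again by $\alpha^*_k<\alpha n$ applied to linear-sized sets. If classes nonetheless persist, we fall back on Han's lattice argument, showing that the partition is trivial using transversal $K_k$'s that hit different classes.
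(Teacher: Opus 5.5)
Your architecture matches the paper's. In case (1) you get $t=1$ closedness of each $V_i$ from common neighbourhoods and the $(k-1)$-hole condition. In case (2) you get $t=2$ closedness through an intermediate vertex. Then you build the absorber $T\cup\bigcup_i C_i$, of size at most $k+k(2k-1)=2k^2$, inside a maximal-family count.

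\textbf{The gap: the case (2) connector.} The step you flag as the main obstacle is not resolved, and the resolution you sketch would not work. A $2$-step connector for $u,v\in V_i$ needs a \emph{single} vertex $w\in V_i\setminus W$ with two properties. First, $w$ completes a transversal $K_{k-1}$ inside $N(u)$ to a transversal $K_k$. Second, $w$ completes a \emph{disjoint} transversal $K_{k-1}$ inside $N(v)$.
\begin{itemize}
\item Applying the $k$-hole condition to $V_i\setminus W$ and $N(u)$, then separately to $V_i\setminus W$ and $N(v)$, gives two unrelated intermediate vertices. ``Chaining'' them does not give a $2$-step connector.
\item Knowing only that each vertex reaches $\Omega(n)$ others does not help, since two linear-sized subsets of $V_i$ need not intersect.
\item Any longer chain, or a fall-back to Han's lattice argument, pushes $t$ above $2$. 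The resulting absorbers then have more than $2k^2$ vertices and are no longer $(K_k,2k)$-absorbers.
\end{itemize}

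\textbf{The missing observation.} It is short and quantitative.
\begin{itemize}
\item Fix $W$ first.
\item Choose $D_{j,1}\subseteq N_{V_j}(u)\setminus W$ and $D_{j,2}\subseteq N_{V_j}(v)\setminus(W\cup D_{j,1})$, each of size at least $\alpha n$.
\item Let $X$ be the set of ``bad'' $w\in V_i\setminus W$: those for which no transversal $K_k$ in $\{w\}\cup\bigcup_{j\ne i}D_{j,1}$ contains $w$. Then $|X|<\alpha n$. Otherwise $X$ together with the $D_{j,1}$ would be a $k$-partite hole of size at least $\alpha n$.
\item The same holds for the $D_{j,2}$. So both good sets have size at least $(1-\alpha-\beta)n$, and they intersect in many vertices.
\item Pick $w\neq u,v$ in the intersection, with cliques $Y$ (in the $D_{j,1}$) and $Z$ (in the $D_{j,2}$). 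The connector is $\{w\}\cup Y\cup Z$, of size $2k-1$. The vertex set $\{u\}\cup\{w\}\cup Y\cup Z$ is factored by $\{u\}\cup Y$ and $\{w\}\cup Z$, while $\{v\}\cup\{w\}\cup Y\cup Z$ is factored by $\{w\}\cup Y$ and $\{v\}\cup Z$.
\end{itemize}
This is exactly the paper's second closedness claim.

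\textbf{Step 2: the factorizations are swapped.} With $A_S=T\cup\bigcup_i C_i$:
\begin{itemize}
\item $G[A_S]$ is factored by the copies from $G[C_i\cup\{t_i\}]$ alone. These already cover $T$, so adding $T$ would reuse each $t_i$.
\item $G[A_S\cup S]$ is factored by $T$ together with the copies from $G[C_i\cup\{s_i\}]$. The latter do not cover $T$ on their own.
\end{itemize}

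\textbf{The count.} Closedness is only available for forbidden sets of size at most $\beta n$, not $\beta n+O(k^2)$. This is why one stops at $\frac{\beta n-k}{4k^2}$ absorbers of size at most $2k^2$. Their union then has fewer than $\frac{\beta n-k}{2}$ vertices, so the forbidden set, including $S$, $T$ and the connectors, stays below $\beta n$.
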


\subsection{Proof of Lemma \ref{3}} \label{13} 

To construct an absorbing set, we introduce the concept of an \emph{$F$-fan}.

\begin{definition}
	{\rm(\cite{HMWY})
		For a vertex $v\in V(G)$ and a $k$-vertex graph $F$, an \textit{$F$-fan} $\mathcal{F}_{v}$ at $v$
		in $V(G)$ is a collection of pairwise disjoint sets $S\subseteq  V(G)\backslash \left\{v \right\}$ such that for each $S\in \mathcal{F}_{v}$
		we have that $\vert S\vert=k-1$ and $\left\{v \right\}\cup S$ spans a copy of $F$.
		
	}
\end{definition}

We also employ the following bipartite template introduced by Montgomery \cite{montgomery2019spanning}.

\begin{lemma}\label{31}
{\rm(\cite{montgomery2019spanning})}
	Let $\beta>0$. There exists $m_{0}$ such that the following holds for every $m\geq m_{0}$.
	There exists a bipartite graph $B_{m}$ with vertex classes $X_{m}\cup Y_{m} $ and $Z_{m}$ and a maximum degree {\rm 40}, such that $\vert X_{m}\vert=m+\beta m$, $\vert Y_{m}\vert=2m$ and $\vert Z_{m}\vert=3m$, and for every subset $X^{\prime}_{m}\subseteq X_{m}$ of size $\vert X^{\prime}_{m}\vert=m$, the induced graph $B[X^{\prime}_{m}\cup Y_{m},Z_{m}]$ contains a perfect matching.
\end{lemma}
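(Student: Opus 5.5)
Since $B_m$ only has to exist, I would construct it at random and verify Hall's condition for every $X'_m$ at once through a few pseudorandom properties. I would not handle each $X'_m$ separately, because there are exponentially many of them. Take vertex classes $X_m$, $Y_m$, $Z_m$ of the prescribed sizes and let $B_m$ be the union of two random bipartite graphs of bounded degree:
\begin{itemize}
\item $H_1$ between $X_m$ and $Z_m$, in which every $x\in X_m$ picks $d$ random neighbours in $Z_m$;
\item $H_2$ between $Y_m$ and $Z_m$, a random bounded-degree bipartite graph in which every vertex of $Y_m\cup Z_m$ has degree about $d$. For instance, $H_2$ can be a union of $d$ random matchings from $Z_m$ into $Y_m$, each leaving $m$ vertices of $Y_m$ unmatched.
\end{itemize}
Finally, delete a few vertices of excessive degree, or resample them, so that the maximum degree is at most $40$ for a suitable constant $d$.

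Fix any $X'_m$ of size $m$. By Hall/K\"onig, $B[X'_m\cup Y_m,Z_m]$ has no perfect matching iff there are $A\subseteq X'_m\cup Y_m$ and $B\subseteq Z_m$ with no edges between them and $|A|+|B|>3m$. To see this, take $A$ violating Hall's condition and set $B=Z_m\setminus N(A)$. It therefore suffices to show that, with positive probability, $B_m$ contains no such pair for any choice of $X'_m$. I would split into three regimes according to a small constant $\varepsilon$.
\begin{enumerate}
\item If $|A|\le \varepsilon m$, then $|B|>(3-\varepsilon)m$ forces $|N(A)|<|A|$. This contradicts the expansion property (P1): every $A\subseteq X_m\cup Y_m$ with $|A|\le\varepsilon m$ has $|N(A)|\ge 2|A|$.
\item If $|B|\le\varepsilon m$, we need $|A|\le 3m-|B|$. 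This follows from (P2): every $B\subseteq Z_m$ with $|B|\le \varepsilon m$ satisfies $|N(B)\cap Y_m|\ge |B|$. Indeed, $A$ avoids $N(B)\cap Y_m$, so $|A|\le |X'_m|+|Y_m|-|B| = 3m-|B|$.
\item If both $|A|,|B|\ge\varepsilon m$, I would use (P3): any $A\subseteq X_m\cup Y_m$ and $B\subseteq Z_m$ with $|A|,|B|\ge \varepsilon m$ and $|A|+|B|>3m$ span an edge.
\end{enumerate}
Properties (P1) and (P2) are standard first-moment computations for random graphs of constant degree $d\ge 3$. One sums over sets of size $s\le\varepsilon m$ and over candidate neighbourhoods of size $2s$ (respectively $s$). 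This gives a bound like $\sum_s \binom{3m}{s}\binom{3m}{2s}(3s/m)^{ds}=o(1)$ once $\varepsilon$ is small compared with $d$.

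The main obstacle is (P3), the middle regime, and it is also where $\beta$ enters. Here $A$ may contain up to $m+\beta m$ vertices of $X_m$ and only $m$ of them can be used. I would first handle $A\cap Y_m$ and $B$ via $H_2$, using that $H_2$ is a good expander with spectral gap: a random union of $d$ matchings has second eigenvalue $O(\sqrt d)$. Then the expander mixing lemma bounds $e(A\cap Y_m,B)$ from below whenever $|A\cap Y_m|\,|B|\gtrsim m^2/d$.

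When $|A\cap Y_m|$ is small, $|A|+|B|>3m$ forces $|A\cap X_m|$ and $|B|$ to both be large, with $|A\cap X_m|+|B|>3m-|A\cap Y_m|$. A first-moment union bound over such pairs in $H_1$ works. Each $x\in A\cap X_m$ misses $B$ with probability $(1-|B|/3m)^d$, and this beats $\binom{|X_m|}{|A|}\binom{3m}{|B|}\le 2^{(4+\beta)m}$ once $d$ is a large enough constant; that $d$ can be kept $\le 40$ after the bookkeeping, as in \cite{montgomery2019spanning}. The case distinction is cleanest if I split on whether $|A\cap Y_m|\ge m/2$.

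Once (P1)--(P3) hold simultaneously, which happens with positive probability, the argument above gives a perfect matching for every $X'_m$ of size $m$, which proves the lemma.
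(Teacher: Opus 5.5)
The paper gives no proof of this lemma (it is quoted from Montgomery), so your sketch has to stand on its own. The architecture is the right one: a random bounded-degree graph, the K\"onig reformulation, small-set expansion on both sides, and a union bound for linear-size pairs. However, the middle regime, which you correctly identify as the crux, does not go through as written.

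First, (P3) is false as stated, because it ranges over all $A\subseteq X_m\cup Y_m$. In any graph of maximum degree at most $40$, a set $B\subseteq Z_m$ with $|B|=\varepsilon m$ has $|N(B)|\le 40\varepsilon m$. So $A:=(X_m\cup Y_m)\setminus N(B)$ satisfies $|A|+|B|\ge(3+\beta-39\varepsilon)m>3m$ once $\beta>39\varepsilon$, yet spans no edge to $B$. You must impose $|A\cap X_m|\le m$, which the Hall setup does give you.

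Second, even with that restriction, your split at $|A\cap Y_m|=m/2$ fails. A small $|A\cap Y_m|$ does not force $|A\cap X_m|$ to be large: take $|A\cap X_m|=1$, $|A\cap Y_m|=m/2-1$, $|B|=5m/2+1$. Then the $H_1$-probability $(1-|B|/3m)^{d|A\cap X_m|}$ is a constant, set against your $2^{(4+\beta)m}$ choices. The edges of $H_2$ have to be used in such cases too.

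Third, your mixing-lemma step needs $|A\cap Y_m||B|\gtrsim m^2/d$ with $|B|$ as small as $\varepsilon m$, i.e.\ $d\gtrsim 1/\varepsilon$. So $\varepsilon$ must be fixed before $d$, not chosen ``small compared with $d$'', and then nothing keeps $d$ below $40$.

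The clean repair is to raise both expansion thresholds from $\varepsilon m$ to $m$. Prove $|N(A)|\ge|A|$ for all $A\subseteq X_m\cup Y_m$ with $|A|\le m$, and $|N(B)\cap Y_m|\ge|B|$ for all $B\subseteq Z_m$ with $|B|\le m$. Your regime arguments then leave only the case $|A|,|B|>m$. That case follows from one statement: every $S\subseteq X_m\cup Y_m$ and $T\subseteq Z_m$ with $|S|=|T|=m$ span an edge. This is a first-moment bound over all subsets, with no restriction on $S\cap X_m$. For $\beta=1$ its cost is $\binom{4m}{m}\binom{3m}{m}(2/3)^{11m}\le(64\cdot(2/3)^{11})^m=o(1)$, provided every vertex of $X_m\cup Y_m$ sends $11$ suitably independent edges to $Z_m$.

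The construction also needs repair. Your example for $H_2$ (random non-perfect matchings) leaves a constant fraction of $Z_m$, about $3^{-d}$ of it or more, with no neighbour in $Y_m$. For such a $z$, choose $X'_m$ avoiding its at most $40$ neighbours in $X_m$; then $z$ is isolated. So that $B_m$ violates the lemma outright, not just (P2).

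With independent neighbour choices in $H_1$, linearly many vertices of $Z_m$ exceed any fixed degree. You cannot delete vertices to fix this, because the class sizes are prescribed. Both problems disappear with the following choice:
\begin{itemize}
\item $H_1$ is a union of $s$ random injections $X_m\to Z_m$;
\item $H_2$ is a union of $t$ random perfect matchings between each of three disjoint $m$-subsets of $Z_m$ and each of two disjoint $m$-subsets of $Y_m$.
\end{itemize}
Then all degrees are at most $s+2t$ and every vertex has a neighbour. Hypergeometric bounds combined with AM--GM give the three first-moment estimates. For instance, $s=11$, $t=4$ works for $\beta=1$, with maximum degree at most $19$.

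Finally, the degree bound cannot come from bookkeeping that is uniform in $\beta$, as your $2^{(4+\beta)m}$ count suggests. Every vertex of $X_m\cup Y_m$ needs a neighbour while $e(B_m)\le 40\cdot 3m$, so the statement is actually false for $\beta>117$. The right way to handle $\beta$ is monotonicity. Build $B_m$ once for $\beta_0=1$, and for $\beta\le 1$ delete $(1-\beta)m$ vertices of $X_m$. Every $m$-subset of what remains is an $m$-subset of the original $X_m$, so the matching property is inherited.
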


Now, we give the proof of Lemma \ref{3}, which follows from standard applications of  Lemma \ref{31} combined with Lemma \ref{7} as in for example \cite{Han_Hu_Ping_Wang_Wang_Yang_2024, HMWY}.
\begin{proof}[Proof of Lemma \ref{3}]

Given an integer $k \geq 3$ and constants $\delta$ and $\gamma \leq \frac{\delta}{2}$, we choose parameters such that
$
\frac{1}{n} \ll \alpha \ll \xi \ll \beta \ll \delta, \gamma, \frac{1}{k}.
$
Let $G = (V_1, \ldots, V_k, E)$ be a spanning subgraph of the $n$-blow-up of $K_k$ satisfying ${\delta^*}(G) \geq \delta n$ and $\alpha^*_{r}(G)<\alpha n$. By Lemma~\ref{7}, every transversal $k$-set in $G$ contains at least $\frac{\beta n - k}{4k^2}$ vertex-disjoint $(K_k, 2k)$-absorbers. Define $\tau = \frac{\beta}{8k^2}$; then for every $v \in V(G)$, there is a $K_k$-fan $\mathcal{F}_v$ in $V(G)$ of size at least $\frac{\beta n - k}{4k^2} \geq \tau n$. To complete the argument, it suffices to construct a $\xi$-absorbing set $R$ with $|R| \leq \tau n \leq \gamma n$ for some $\xi > 0$.
	
Let $q = \frac{\tau}{1000k^3}$ and $\beta' = \frac{q^{k-1}\tau}{2k}$. For each $i \in [k]$, let $X_i \subseteq V_i$ be a randomly chosen subset of size $qn$. For every vertex $v \in V(G)$, define $f_v$ to be the number of sets in $\mathcal{F}_v$ that are fully contained in $\bigcup_{i=1}^k X_i$. Note that the expected value satisfies $\mu:=\mathbb{E}[f_v] = q^{k-1}|\mathcal{F}_v| \geq q^{k-1}\tau n$. Applying the union bound and Chernoff's inequality, we obtain
\[
\mathbb{P}\left[{\rm there~is}~v \in V(G) \text{ with } f_v < \frac{\mu}{2}\right] \leq kn \exp\left(-\frac{(\mu/2)^2}{2\mu}\right) \leq kn \exp\left(-\frac{q^{k-1}\tau}{8}n\right) = o(1).
\]
Thus, for sufficiently large $n$, there exist subsets $X_i \subseteq V_i$ with $|X_i| = qn$ such that for every $v \in V(G)$, there is a subfamily $\mathcal{F}'_v \subseteq \mathcal{F}_v$ of at least $\frac{q^{k-1}\tau n}{2} = k\beta'n$ sets that are contained in $\bigcup_{i=1}^k X_i$.

	Let $m=\vert X_{i} \vert/(1+\beta^{\prime})$ and note that $m$ is linear in $n$. 
	Let $\left\{ I_{i}  \right\}_{i\in [k]}$ be a partition of $[3km]$ with each $\vert I_{i} \vert=3m$.
	For $i\in [k]$,	arbitrarily choose $k$ vertex-disjoint subsets $Y_{i}$,  $ Z_{i,j}$ for $j\in[k]\backslash \left\{ i\right\}$ in $V_{i}\backslash X_{i}$	with $\vert Y_{i}\vert=2m$ and $\vert Z_{i,j}\vert=3m$. Define $X = \bigcup_{i=1}^k X_i$, $Y = \bigcup_{i=1}^k Y_i$, and $Z = \bigcup_{\substack{i\in [k] \\ j\in [k]\setminus\{i\}}} Z_{i,j}$, noting that $|X| = (1+\beta')km$, $|Y| = 2km$, and $|Z| = 3k(k-1)m$. For each $j \in [k]$, partition $\bigcup_{i \in [k]\setminus\{j\}} Z_{i,j}$ into a family $\mathcal{Z}_j$ of $3m$ transversal $(k-1)$-sets and fix an arbitrary bijection $\phi_j: \mathcal{Z}_j \to I_j$. Define a function $\varphi: [3km] \to \bigcup_{j=1}^k \mathcal{Z}_j$ by setting $\varphi(x) = \phi_j^{-1}(x)$ for $x \in I_j$. For each $i \in [k]$, let $T_i$ be the bipartite graph from Lemma~\ref{31} with vertex classes $X_i \cup Y_i$ and $I_i$, and set $T = \bigcup_{i=1}^k T_i$. Then $T$ forms a bipartite graph between $X \cup Y$ and $[3km]$ with $\Delta(T) \leq 40$.
	
	\begin{claim}\label{1111}
There exists a family $\{A_e\}_{e \in E(T)}$ of pairwise vertex-disjoint subsets in $V(G) \setminus (X \cup Y \cup Z)$ such that for every edge $e = \{w_1, w_2\} \in E(T)$ with $w_1 \in X \cup Y$ and $w_2 \in [3km]$, the set $A_e$ forms a $(K_k, 2k)$-absorber for the transversal $k$-set $\{w_1\} \cup \varphi(w_2)$.
	\end{claim}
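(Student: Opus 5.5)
The plan is to choose the absorbers greedily, one edge of $T$ at a time, using Lemma~\ref{7}. The point to exploit is that Lemma~\ref{7} gives linearly many \emph{vertex-disjoint} absorbers for every transversal $k$-set, whereas the total number of vertices we must avoid stays a small linear fraction of $n$.

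First I check that each relevant set is transversal. For an edge $e=\{w_1,w_2\}\in E(T)$ with $w_1\in X\cup Y$ and $w_2\in[3km]$, the edge lies in some $T_i$. Hence $w_1\in X_i\cup Y_i\subseteq V_i$ and $w_2\in I_i$, so $\varphi(w_2)=\phi_i^{-1}(w_2)\in\mathcal{Z}_i$. This is a transversal $(k-1)$-set meeting every part except $V_i$, so $S_e:=\{w_1\}\cup\varphi(w_2)$ is a transversal $k$-set. Moreover, $S_e\subseteq X\cup Y\cup Z$.

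Next, enumerate $E(T)=\{e_1,\dots,e_M\}$. Since $\Delta(T)\le 40$ and $|[3km]|=3km$, we have $M\le 120km$. Each $(K_k,2k)$-absorber has at most $2k^2$ vertices. Suppose $A_{e_1},\dots,A_{e_{j-1}}$ have already been chosen, and let $W_j:=X\cup Y\cup Z\cup\bigcup_{l<j}A_{e_l}$. Then
\[
|W_j|\le (1+\beta')km+2km+3k(k-1)m+2k^2\cdot 120km\le 300k^3 m .
\]
Because $m\le qn=\frac{\tau n}{1000k^3}$, this gives $|W_j|\le \frac{3}{10}\tau n$ with $\tau=\frac{\beta}{8k^2}$.

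By Lemma~\ref{7}, $S_{e_j}$ has at least $\frac{\beta n-k}{4k^2}\ge\tau n$ pairwise vertex-disjoint $(K_k,2k)$-absorbers. Since they are disjoint, each vertex of $W_j$ meets at most one of them. So at most $|W_j|<\tau n$ of them intersect $W_j$, and some absorber $A_{e_j}$ avoids $W_j$ entirely. By definition an absorber of $S_{e_j}$ lies in $V(G)\setminus S_{e_j}$. By construction, $A_{e_j}$ also avoids $X\cup Y\cup Z$ and all earlier absorbers. After $M$ steps we obtain the required pairwise disjoint family.

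The only real obstacle is the counting: one must confirm that the constants $q$ and $\tau$ were chosen so that the forbidden set is smaller than the number of disjoint absorbers. The choice $q=\tau/(1000k^3)$ makes this comfortable, so no probabilistic argument is needed for this claim.
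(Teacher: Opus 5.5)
Your proposal is correct and follows essentially the paper's argument: the forbidden set $X\cup Y\cup Z\cup\bigcup_{e}A_e$ has size $O(k^3m)$, which is less than $\tau n$, while Lemma~\ref{7} supplies at least $\tau n$ vertex-disjoint $(K_k,2k)$-absorbers for each transversal $k$-set, so one absorber always avoids it. The only differences are that you phrase it as a direct greedy procedure rather than by contradiction, and that you explicitly check that $\{w_1\}\cup\varphi(w_2)$ is transversal; both are slightly cleaner but not a different route.
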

	\begin{proof}[Proof]

Suppose to the contrary that there exists an edge $e' \in E(T)$ for which no such subset $A_{e'}$ exists. Recall that $m = \frac{qn}{1+\beta'}$ and $\Delta(T) \leq 40$. Then we obtain the bound
\[
|X| + |Y| + |Z| + \left| \bigcup_{e \in E(T) \setminus \{e'\}} A_e \right| \leq 4km + 3k(k-1)m + k^2 |E(T)| \leq 4k^2 m + 80k^2 \cdot 3km \leq \frac{\tau n}{2}.
\]
Since every transversal $k$-set in $G$ has at least $\tau n$ vertex-disjoint $(K_k, 2k)$-absorbers, we may choose one such absorber in $V(G) \setminus \left( X \cup Y \cup Z \cup \bigcup_{e \in E(T) \setminus \{e'\}} A_e \right)$ to serve as $A_{e'}$, yielding a contradiction.
    	\end{proof}

Let $R=X \cup Y \cup Z\cup \bigcup_{e\in E(T)}A_{e}$. Then $\vert R \vert \leq \tau n$. To verify that $R$ is a $\xi$-absorbing set, consider an arbitrary balanced subset $U \subseteq V(G) \setminus R$ with $|U| \leq \xi n$. It suffices to show that $G[R \cup U]$ contains a transversal $K_k$-factor. Note that if there exist subsets $Q_i \subseteq X_i$ with $|Q_i| = \beta'm$ for each $i \in [k]$ such that $G\left[\bigcup_{i=1}^k Q_i \cup U\right]$ contains a transversal $K_k$-factor, then one can extend this to a full $K_k$-factor in $G[R \cup U]$. To see this, set $X_i' = X_i \setminus Q_i$. By Lemma~\ref{31}, there exists a perfect matching $M$ in $T$ between $\bigcup_{i=1}^k X_i' \cup Y$ and $[3km]$. For each edge $e = \{w_1, w_2\} \in M$, take a transversal $K_k$-factor in $G[\{w_1\} \cup \varphi(w_2) \cup A_e]$; for each $e' \in E(T) \setminus M$, take a transversal $K_k$-factor in $G[A_{e'}]$. These together form a transversal $K_k$-factor in $G\left[R \setminus \bigcup_{i=1}^k Q_i\right]$, which combined with the factor in $G\left[\bigcup_{i=1}^k Q_i \cup U\right]$ yields the desired transversal $K_k$-factor in $G[R \cup U]$.
	
Therefore, it remains to construct the sets $Q_i$ as described. Recall that for every vertex $v \in V(G)$, there exists a subfamily $\mathcal{F}_v'$ of at least $k\beta' n$ sets contained in $\bigcup_{i=1}^k X_i$. Since $\xi \ll \beta, \frac{1}{k}$, we have $|U| \leq \xi n \leq \beta' n$. Hence, we may greedily select a family $\mathcal{K}_1$ of vertex-disjoint transversal copies of $K_k$ that cover $U$, with all vertices lying in $\bigcup_{i=1}^k X_i$. Define $Q_{i1} = X_i \cap V(\mathcal{K}_1)$ for each $i \in [k]$; then $|Q_{i1}| = \frac{k-1}{k} |U|$, and $\mathcal{K}_1$ forms a transversal $K_k$-factor in $G\left[\bigcup_{i=1}^k Q_{i1} \cup U\right]$. Next, we greedily choose a family $\mathcal{K}_2$ of $\beta' m - \frac{k-1}{k} |U|$ vertex-disjoint transversal copies of $K_k$ in $G\left[X \setminus V(\mathcal{K}_1)\right]$. This is possible because every vertex $v$ has at least $k\beta' n - (k-1)|U| \geq k\left(\beta' m - \frac{k-1}{k} |U|\right)$ sets in $\mathcal{F}_v'$ that are disjoint from $V(\mathcal{K}_1)$. Now define $Q_{i2} = X_i \cap V(\mathcal{K}_2)$ and set $Q_i = Q_{i1} \cup Q_{i2}$ for each $i$. The sets $Q_i$ then satisfy the desired condition, since $\mathcal{K}_1 \cup \mathcal{K}_2$ is a transversal $K_k$-factor in $G\left[\bigcup_{i=1}^k Q_i \cup U\right]$. This completes the proof.
\end{proof}

\subsection{Proof of Lemma \ref{7}}

We will end this section with a detailed proof of Lemma \ref{7}.

\begin{proof}[Proof of Lemma \ref{7}]

Given a constant $\delta$, if $\delta > \frac{1}{2}$, we define $\eta := \delta - \frac{1}{2}$. Choose parameters satisfying $\frac{1}{n} \ll \alpha \ll \beta \ll \delta, \eta$. Let $G = (V_1, \ldots, V_k, E)$ be a spanning subgraph of the $n$-blow-up of $K_k$ such that ${\delta^*}(G) \geq \delta n$ and $\alpha^*_{r}(G)<\alpha n$.
	\begin{claim}\label{96}
		For $\delta>\frac{1}{2}$, $r=k-1$, $V_{i}$ is
		$(K_{k},\beta n,1)$-closed for each $i\in[k]$.
	\end{claim}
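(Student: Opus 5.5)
The plan is to verify the definition directly. Fix $i\in[k]$, two vertices $u,v\in V_i$, and an arbitrary set $W\subseteq V(G)$ with $|W|\le \beta n$. With $t=1$, I need a set $S\subseteq V(G)\setminus W$ with $|S|\le k-1$ such that both $G[\{u\}\cup S]$ and $G[\{v\}\cup S]$ contain a $K_k$-factor. I will take $S$ to be a transversal $(k-1)$-set on the parts $V_j$, $j\in[k]\setminus\{i\}$, such that $S$ spans a copy of $K_{k-1}$ and every vertex of $S$ is adjacent to both $u$ and $v$. Then $\{u\}\cup S$ and $\{v\}\cup S$ each span a transversal copy of $K_k$, as required.

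First, I use the degree condition to find large common neighbourhoods. For each $j\ne i$, the bipartite graph $G[V_i,V_j]$ has minimum degree at least $\delta n=(\frac12+\eta)n$. Hence $|N(u)\cap V_j|,|N(v)\cap V_j|\ge(\frac12+\eta)n$, and inclusion–exclusion inside $V_j$, which has size $n$, gives
\[
|N(u)\cap N(v)\cap V_j|\ge 2\left(\tfrac12+\eta\right)n-n=2\eta n.
\]
Removing $W$ leaves, for each $j\neq i$, a set $C_j:=(N(u)\cap N(v)\cap V_j)\setminus W$ with $|C_j|\ge 2\eta n-\beta n\ge \eta n$, since $\beta\ll\eta$. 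Because $\alpha\ll\eta$, this is at least $\alpha n$.

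Next, I use the hole condition. Choose $U_j\subseteq C_j$ with $|U_j|=\lceil\alpha n\rceil$ for each $j\in[k]\setminus\{i\}$. This gives $k-1$ disjoint sets, each lying in a distinct partite set, so they form a legitimate candidate for a $(k-1)$-partite hole in the sense of Definition~\ref{def:holes} as adapted to $k$-partite graphs. Since $\alpha^*_{k-1}(G)<\alpha n$, they are not a hole. Therefore $G$ contains a copy of $K_{k-1}$ with exactly one vertex $s_j\in U_j$ for each $j\ne i$. Setting $S=\{s_j: j\ne i\}$ gives $S\cap W=\emptyset$ and $|S|=k-1=k\cdot 1-1$, and $S$ is a transversal $K_{k-1}$ lying entirely in $N(u)\cap N(v)$. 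Thus $S$ is a $K_k$-connector for $u$ and $v$.

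Since $u,v$ and $W$ were arbitrary, $V_i$ is $(K_k,\beta n,1)$-closed. There is no serious obstacle here. The only point needing care is the constant hierarchy: the slack $2\eta n$ obtained from $\delta>\frac12$ must absorb both $|W|\le\beta n$ and the hole size $\alpha n$, which is exactly why $\alpha,\beta\ll\eta$. This step is also precisely where the threshold $\frac12$ enters the argument.
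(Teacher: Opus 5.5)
Your proposal is correct and follows essentially the same route as the paper: you take the common neighbourhood of $u$ and $v$ in each $V_j$, $j\neq i$, with $W$ removed. Its size is at least $2\eta n-\beta n\ge\alpha n$ (the only difference is that the paper removes $W$ before intersecting). You then invoke $\alpha^*_{k-1}(G)<\alpha n$ to find a transversal $K_{k-1}$ there, which serves as the $K_k$-connector of size $k-1$.
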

	\begin{proof}[Proof.]
		
Without loss of generality, assume that $i = 1$. For any two vertices $u, v \in V_1$, %since $\bar{\delta}(G) \geq \delta n$, we define $2(k-1)$ vertex subsets: for each $j \in [k] \setminus \{1\}$, 
let $D_{j,1} = N_{V_j}(u)$ and $D_{j,2} = N_{V_j}(v)$ for each $j \in [k] \setminus \{1\}$. Consider any vertex set $W \subseteq V(G) \setminus \{u, v\}$ with $|W| \leq \beta n$. For $i \in [k]$ and $j \in [k] \setminus \{1\}$, define $V'_i = V_i \setminus W$, $D'_{j,1} = D_{j,1} \setminus W$, and $D'_{j,2} = D_{j,2} \setminus W$. Note that $|D'_{j,1}|, |D'_{j,2}| \geq \delta n - \beta n \geq \left(\frac{1}{2} + \eta - \beta\right) n > \frac{n}{2}+\alpha n$, so we have $|D'_{j,1} \cap D'_{j,2}| \geq \alpha n$. For each $j \in [k] \setminus \{1\}$, let $S_j = D'_{j,1} \cap D'_{j,2}$. By the definition of $\alpha^*_{k-1}(G) < \alpha n$, there exists a transversal $K_{k-1}$. In fact, the transversal $K_{k-1}$ is the $K_k$-connector for $u,v$. It follows by definition that $u$ is $(K_k, \beta n, 1)$-reachable to $v$.	
	\end{proof}

	\begin{claim}\label{3456}
		For $\delta>0$, $r=k$, $V_{i}$ is
		$(K_{k},\beta n,2)$-closed for each $i\in[k]$.
	\end{claim}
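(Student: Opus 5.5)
We work directly with the definition of $(K_k,\beta n,2)$-reachability. Fix $u,v\in V_1$ (taking $i=1$ without loss of generality) and a set $W$ with $|W|\le\beta n$. Since $t=2$, the aim is a connector $S$ with $|S|=2k-1$ of the form
\[
S=\{w\}\cup T\cup T',
\]
where $w\in V_1\setminus W$, and $T,T'$ are disjoint transversal $(k-1)$-sets in $V_2\cup\dots\cup V_k$ avoiding $W$. We want
\[
\{u\}\cup T,\quad \{w\}\cup T',\quad \{v\}\cup T',\quad \{w\}\cup T
\]
to all span copies of $K_k$. Then $\{u\}\cup S$ is covered by the cliques $\{u\}\cup T$ and $\{w\}\cup T'$, and $\{v\}\cup S$ by $\{v\}\cup T'$ and $\{w\}\cup T$. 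Both are transversal $K_k$-factors.

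\emph{Step 1: disjointness.} For each $j\in[2,k]$, split $V_j$ into two halves $V_j^1,V_j^2$. A uniformly random balanced split gives, by Chernoff, $|N_{V_j}(u)\cap V_j^1|\ge \delta n/3$ and $|N_{V_j}(v)\cap V_j^2|\ge\delta n/3$ with high probability, so such a split exists. Set
\[
A_j:=(N_{V_j}(u)\cap V_j^1)\setminus W,\qquad B_j:=(N_{V_j}(v)\cap V_j^2)\setminus W .
\]
Since $\beta\ll\delta$, each of these sets has size at least $\delta n/4$. Choosing $T\subseteq\bigcup_j A_j$ and $T'\subseteq\bigcup_j B_j$ then forces $T\cap T'=\emptyset$.

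\emph{Step 2: the $u$-side cliques.} Greedily pick vertex-disjoint transversal copies of $K_k$ with one vertex in $V_1\setminus(W\cup\{u,v\})$ and one vertex in each $A_j$. By $\alpha^*_k(G)<\alpha n$, whenever all $k$ remaining sets have size at least $\alpha n$, a further transversal $K_k$ exists. Hence we obtain a family $\mathcal K_u$ of at least $\delta n/5$ such cliques. For $w\in V_1$ covered by $\mathcal K_u$, let $T_w$ denote the rest of its clique. By construction $\{w\}\cup T_w$ and $\{u\}\cup T_w$ both span $K_k$, the latter because $T_w\subseteq N(u)$. Let $P\subseteq V_1$ be the set of these vertices $w$, so $|P|\ge\delta n/5$.

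\emph{Step 3: the $v$-side clique.} Apply the hole condition once more to the $k$ sets $P,B_2,\dots,B_k$, each of size at least $\alpha n$. This yields a transversal $K_k$ of the form $\{w\}\cup T'$ with $w\in P$ and $T'\subseteq\bigcup_j B_j$. Then $\{v\}\cup T'$ is a clique since $T'\subseteq N(v)$. Setting $T:=T_w$, Step 1 guarantees $T\cap T'=\emptyset$, and all sets avoid $W\cup\{u,v\}$. So $S=\{w\}\cup T\cup T'$ is a $K_k$-connector of size $2k-1=kt-1$, as required.

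The main obstacle is Step 1. Without it, the cliques through $u$ and through $v$ must share the vertex $w$ while using disjoint $(k-1)$-sets, and a naive double greedy cannot rule out every candidate $T'$ intersecting its $T_w$. The random splitting of the parts $V_2,\dots,V_k$ removes this issue cheaply, because $\delta>0$ is a fixed constant and all neighbourhoods are linear.
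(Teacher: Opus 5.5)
Your proof is correct and is essentially the paper's argument: the connector is $\{w\}\cup T\cup T'$ with $T$ and $T'$ taken from disjoint pieces of $N(u)$ and $N(v)$, and the common vertex $w\in V_1$ is found using $\alpha^*_k(G)<\alpha n$. The differences are minor: the paper gets disjointness deterministically by choosing $D_{j,2}\subseteq N_{V_j}(v)\setminus(W\cup D_{j,1})$, so your random split is unnecessary; and it locates $w$ by showing that all but fewer than $\alpha n$ vertices of $V_1\setminus W$ extend to a clique on each side and intersecting the two sets, rather than by your greedy family followed by one further application of the hole condition.
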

	\begin{proof}[Proof.]
Without loss of generality, assume that $i = 1$. Consider any vertex set $W \subseteq V(G) \setminus \{u, v\}$ with $|W| \leq \beta n$. For $i \in [k]$ and $j \in [k] \setminus \{1\}$, define $V'_i = V_i \setminus W$, $D_{j,1} \subseteq N_{V_j}(u) \setminus W$, and $D_{j,2} \subseteq N_{V_j}(v) \setminus (W \cup D_{j,1})$ satisfying $|D_{j,l}| = 2\alpha n$ for $j \in [2,k]$, $l \in [2]$. In addition, 
let $A_u=\{w\in V_1' \mid \text{there is a transversal}~K_k \text{ in } \{w\}\cup(\bigcup_{j=2}^{k} D_{j1}) \}$ and $A_v=\{ w\in V_1'\mid \text{there is a transversal } K_k \text{ in } \{w\}\cup(\bigcup_{j=2}^{k} D_{j,2})\}.$
Note that $|A_{u}|, |A_{v}| \geq n-\alpha n-\beta n $, so we have $|A_{u} \cap A_{v}| \neq \emptyset$. Arbitrarily choose a vertex $w \in A_{u} \cap A_{v}$. Take vertices 
		$y_{j}\in D_{j1}$ for $j \in [k] \setminus \{1\}$ such that the set $K_1$ is a transversal $K_{k}$  passing through $w, y_{2},... ,y_{k}$. 
	Similarly, we can obtain another set $K_2$ of a transversal $K_{k}$ in $V(G)\backslash(W\cup V(K_{1}))$ that passes through $w, z_{2},..., z_{k}$, where $z_{j}\in D_{j,2}$ for $j \in [k] \setminus \{1\}$.
 	In fact, the set $\left\{ w \right\}\cup\left(V(K_{1})\backslash\left\{ u \right\}\right)\cup \left(V(K_{2})\backslash\left\{ v \right\}\right)$ is a $K_{k}$-connector for $u$, $v$. Therefore $u$ is $(K_{k},\beta n,2)$-reachable to $v$.
	\end{proof}

For every transversal $k$-subset $S\subseteq V(G)$, we greedily obtain as many pairwise disjoint $(K_{k},2k)$-absorbers for $S$ as possible.
For convenience,
we write $S=\left\{ s_{1}, s_{2},\ldots,s_{k} \right\}$ where $s_{i}\in V_{i}$ for $i\in[k]$.
Let $\mathcal{A}=\left\{ A_{1}, A_{2},\ldots,A_{\ell} \right\} $ be a maximal family of such absorbers. Suppose to the contrary that $\ell< \frac{\beta n-k}{4k^2}$. Since each $A_{j}$ has size at most $2k^2$, we have $\left\vert \bigcup_{j=1}^{\ell}A_{j}\right\vert< \frac{\beta n-k}{2}$.

Since $\alpha \ll \beta \ll \delta$, we can obtain a transversal copy $T$ of $K_k$ in $V(G) \setminus \left( \bigcup_{j=1}^{\ell} A_j \cup S \right)$, and write $T = \{t_1, t_2, \dots, t_k\}$ with $t_i \in V_i$ for each $i \in [k]$. By the closedness of each $V_i$, we select a family $\{I_1, I_2, \dots, I_k\}$ of vertex-disjoint subsets in $V(G) \setminus \left( \bigcup_{j=1}^{\ell} A_j \cup S \cup T \right)$ such that each $I_i$ is a $K_k$-connector for $s_i$ and $t_i$ with $|I_i| \leq 2k - 1$. Note that for any $1 \leq k' \leq k$, we have
\[
\left\lvert \bigcup_{j=1}^{\ell} A_j \cup \left( \bigcup_{i=1}^{k'} I_i \right) \cup S \cup T \right\rvert \leq \frac{\beta n - k}{2} + k(2k - 1) + 2k < \beta n.
\] 
Hence, we can iteratively choose each $I_i$ because $s_i$ and $t_i$ are $(K_k, \beta n, 2)$-reachable. It follows that $\bigcup_{i=1}^{k} I_i \cup T$ is a $(K_k, 2k)$-absorber for $S$, which contradicts the maximality of $\ell$.	
\end{proof}

\section{Almost cover}\label{sec4}
The proof of Lemma~\ref{4} follows from a standard application of the regularity method. 
%\subsection{Regularity}
We will first introduce some basic definitions and properties.
Given a graph $G$ and a pair $(X,Y)$ of vertex-disjoint subsets in $V(G)$, the
\textit{density} of $(X,Y)$ is defined as
\begin{equation}
	d(X,Y)=\frac{e(X,Y)}{\vert X \vert \vert Y \vert}. \nonumber
\end{equation}
Given constants $\varepsilon,d>0$, we say that $(X,Y)$ is \textit{$(\varepsilon,d)$-regular} if $d(X,Y)\geq d$ and for all $X^{\prime} \subseteq X$, $Y^{\prime} \subseteq Y$ with $\vert X^{\prime} \vert \geq \varepsilon \vert X \vert$ and $\vert Y^{\prime} \vert \geq \varepsilon \vert Y \vert$, we have
\begin{equation}
	\vert d(X^{\prime},Y^{\prime})-d(X,Y)\vert \leq \varepsilon. \nonumber
\end{equation}
The following fact is simply derived from the
definition.	
\begin{fact}\label{22}
Let $(X,Y)$ be an $(\varepsilon,d)$-regular pair and let $B \subseteq Y$ with $|B| \ge \varepsilon |Y|$. Then all but at most $\varepsilon |X|$ vertices in $X$ have at least $(d-\varepsilon)|B|$ neighbors in $B$.
\end{fact}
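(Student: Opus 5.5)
The statement is Fact~\ref{22}, and I will prove it by contradiction straight from the definition of an $(\varepsilon,d)$-regular pair.

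Let $X'\subseteq X$ be the set of vertices of $X$ with fewer than $(d-\varepsilon)|B|$ neighbours in $B$. Suppose, for contradiction, that $|X'|>\varepsilon|X|$.

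First I check the size conditions. We have $|X'|\ge\varepsilon|X|$ by assumption, and $|B|\ge\varepsilon|Y|$ by hypothesis. So the regularity condition applies to the pair $(X',B)$ and gives
\[
|d(X',B)-d(X,Y)|\le\varepsilon .
\]
Combining this with $d(X,Y)\ge d$, we get $d(X',B)\ge d-\varepsilon$.

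Next I bound the same density from the definition of $X'$. Summing degrees into $B$ over the vertices of $X'$,
\[
e(X',B)=\sum_{x\in X'}|N(x)\cap B|<|X'|(d-\varepsilon)|B| .
\]
The inequality is strict because $X'$ is nonempty and each summand is strictly less than $(d-\varepsilon)|B|$. Dividing by $|X'||B|$ gives $d(X',B)<d-\varepsilon$.

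The two bounds on $d(X',B)$ contradict each other. Hence $|X'|\le\varepsilon|X|$, which is exactly the claim.

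There is no real obstacle here. The only care needed is at the threshold: we assume $|X'|>\varepsilon|X|$, which is stronger than the $|X'|\ge\varepsilon|X|$ that regularity requires, and the strict degree inequality then yields the strict contradiction.
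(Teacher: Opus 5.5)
Your argument is correct and is exactly the direct derivation from the definition of an $(\varepsilon,d)$-regular pair that the paper has in mind. The paper gives no written proof beyond remarking that the fact follows from the definition, and applying regularity to the set of low-degree vertices against $B$ is the standard way to carry this out.
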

We now state a degree form of the regularity lemma (see
\cite{1991Szemer}, Theorem 1.10). 

\begin{lemma}\label{19}
	{\rm(Degree form of  Regularity Lemma \cite{1991Szemer}).} 
	For every $\varepsilon>0$ there is an $N = N(\varepsilon)$
	such that the following holds for any real number $d\in(0,1]$ and $n\in \mathbb{N}$. Let $G=(V, E)$ be an $n$-vertex graph.
	Then there exists a partition $\mathcal{P}=V_{0}\cup V_{1}\cup\dots\cup V_{k}$  
	and a spanning subgraph
	$G^{\prime} \subseteq G$ with the following properties: 
    \begin{enumerate}
	\item $\frac{1}{\varepsilon} \leq k \leq N$;
	  \item $\vert V_{0} \vert \leq \varepsilon n$ and
	$\vert V_{1} \vert=\dots=\vert V_{k} \vert=m\leq \varepsilon n$;
	\item $d_{G^{\prime}}(v)\geq d_{G}(v)-(d+\varepsilon)n$ for all $v \in V(G)$;
	\item every $V_{i}$ is an independent set in $G^{\prime}$ for $i\in[k]$;
	\item every pair $(V_{i}, V_{j})$, $1 \leq i<j \leq k$  is $\varepsilon$-regular in $G^{\prime}$ with density $0$ or at least $d$.
    \end{enumerate}
\end{lemma}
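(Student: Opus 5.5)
The plan is to derive this degree form from the standard (equitable) Szemerédi regularity lemma by a cleaning argument, as in Komlós–Simonovits. Given $\varepsilon$, fix $\varepsilon' \ll \varepsilon$. Apply the usual regularity lemma to $G$ with parameter $\varepsilon'$ and a lower bound $\ell_0 \ge 2/\varepsilon$ on the number of parts. This gives $N' = N'(\varepsilon')$ and a partition $U_0 \cup U_1 \cup \dots \cup U_\ell$ with $\ell_0 \le \ell \le N'$ and $|U_0| \le \varepsilon' n$. The clusters $U_1,\dots,U_\ell$ have equal size, and all but at most $\varepsilon'\ell^2$ pairs $(U_i,U_j)$ are $\varepsilon'$-regular. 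Set $N := N'$.

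\textbf{Cleaning the partition.} I would move two kinds of vertices into the exceptional set.
\begin{itemize}
\item Whole clusters: a cluster lying in more than $\sqrt{\varepsilon'}\,\ell$ irregular pairs goes entirely into the exceptional set. Double counting shows there are at most $2\sqrt{\varepsilon'}\,\ell$ such clusters.
\item Atypical vertices: for each remaining cluster $U_i$ and each regular partner $U_j$, Fact~\ref{22} says all but $\varepsilon'|U_i|$ vertices of $U_i$ have degree into $U_j$ close to $d(U_i,U_j)|U_j|$. In particular, their degree is at most $(d(U_i,U_j)+\varepsilon')|U_j|$. A vertex of $U_i$ is atypical for more than $\sqrt{\varepsilon'}\,\ell$ indices $j$ for at most $\sqrt{\varepsilon'}\,|U_i|$ vertices, again by double counting. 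I move these vertices to the exceptional set too.
\end{itemize}
Then I trim all surviving clusters to a common size $m$ by moving surplus vertices into the exceptional set. The result is the parts $V_1,\dots,V_k$ with $k \ge (1-2\sqrt{\varepsilon'})\ell \ge 1/\varepsilon$ and $m \le n/\ell \le \varepsilon n$. The new exceptional set $V_0$ has size $O(\sqrt{\varepsilon'})n \le \varepsilon n$. Since each cluster kept at least a $(1-\sqrt{\varepsilon'})$ fraction of its vertices, the regular pairs remain $2\varepsilon'$-regular, hence $\varepsilon$-regular, with density changed by at most $\varepsilon'$.

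\textbf{Defining $G'$.} Delete from $G$ the following edges:
\begin{itemize}
\item all edges inside some $V_i$;
\item all edges between $V_i$ and $V_j$ when the pair is irregular;
\item all edges between $V_i$ and $V_j$ when the pair has density less than $d$.
\end{itemize}
Edges touching $V_0$ are kept, so vertices of $V_0$ lose nothing. Properties (1), (2), (4) and (5) are then immediate. For (5), note that a sub-pair of an $\varepsilon$-regular pair with density $0$ after deletion is trivially regular.

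\textbf{Degree loss, the main obstacle.} The key step is property (3), because the loss must be controlled for every vertex, not on average. Fix $v\in V_i$. Its losses are bounded as follows:
\begin{itemize}
\item inside $V_i$: at most $m \le \varepsilon' n$;
\item irregular pairs: at most $\sqrt{\varepsilon'}\,\ell\cdot m \le \sqrt{\varepsilon'}\,n$, by the choice of surviving clusters;
\item low-density pairs where $v$ is atypical: at most $\sqrt{\varepsilon'}\, n$;
\item the remaining low-density pairs: at most $\sum_j (d+2\varepsilon')m \le (d+2\varepsilon')n$.
\end{itemize}
The total is at most $(d+\varepsilon)n$ once $\varepsilon'$ is small. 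This per-vertex control is exactly why the atypical vertices had to be moved to $V_0$ beforehand, and I expect this bookkeeping to be the only delicate point.
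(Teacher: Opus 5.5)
The paper gives no proof of this lemma. It simply quotes it from Koml\'os--Simonovits \cite{1991Szemer} (Theorem 1.10), so there is no in-paper argument to compare against. Your derivation from the equitable regularity lemma is the standard one, and it is correct.

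You rightly saw that discarding clusters that lie in many irregular pairs is not enough for the per-vertex bound (3). For example, the centre of a spanning star would lose almost all of its degree through deleted low-density pairs. Moving the vertices that are atypical for more than $\sqrt{\varepsilon'}\,\ell$ partners into $V_0$ fixes this. Note also that $d$ is used only in the deletion step, so $N$ depends on $\varepsilon$ alone, as required.

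Some bookkeeping should be tidied:
\begin{itemize}
\item The bound $m\le\varepsilon' n$ needs $\ell_0\ge 1/\varepsilon'$, not merely $\ell_0\ge 2/\varepsilon$. Alternatively, use $m\le n/\ell\le\varepsilon n/2$ and take $2\sqrt{\varepsilon'}+2\varepsilon'\le\varepsilon/2$.
\item Fact~\ref{22} as stated only bounds degrees from below. You need its upper-bound analogue: fewer than $\varepsilon'|U_i|$ vertices of $U_i$ have more than $(d(U_i,U_j)+\varepsilon')|U_j|$ neighbours in $U_j$. This follows by testing regularity on that set against all of $U_j$.
\item In the last loss term, the per-partner bound is $(d+2\varepsilon')|U_j|$, where $|U_j|\ge m$ is the original cluster size, and it uses $d(U_i,U_j)\le d(V_i,V_j)+\varepsilon'<d+\varepsilon'$. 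It is $\sum_j |U_j|\le n$ that yields the total $(d+2\varepsilon')n$; writing $m$ in place of $|U_j|$ understates the bound.
\end{itemize}

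Finally, the statement as written is false for $1\le n<1/\varepsilon$: then $m\le\varepsilon n<1$ and $|V_0|\le\varepsilon n<1$ would force $V=\emptyset$. Also, the regularity lemma with at least $\ell_0$ parts only applies once $n$ is large. For the remaining small $n\ge 1/\varepsilon$, the singleton partition with $G'=G$ works after enlarging $N$. With that added, your argument covers every $n$ for which the lemma can hold.
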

A fundamental tool used in conjunction with an $\varepsilon$-regular partition is the notion of reduced graph. The \emph{reduced graph} $R_d$ associated with the partition $\mathcal{P}$ is defined on the vertex set $\{V_1, \ldots, V_k\}$, where two parts $V_i$ and $V_j$ are connected by an edge in $R_d$ if the pair $(V_i, V_j)$ has density at least $d$ in $G'$. For each $i \in [k]$, we denote by $d_R(V_i)$ the degree of $V_i$ in $R_d$.

\begin{fact}\label{20}
Given positive constants $d$, $\varepsilon$, and $\delta$, consider an $n$-vertex graph $G = (V, E)$ with  $\delta(G) \geq \delta n$. Let $G'$ and $\mathcal{P}$ be obtained by applying Lemma{\rm{~\ref{19}}}, and let $R_d$ be the reduced graph as defined above. Then for every vertex $V_i \in V(R_d)$, we have $d_R(V_i) \geq (\delta - 2\varepsilon - d)k$.
\end{fact}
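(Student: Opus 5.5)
Fact~\ref{20} is a degree count transferring the minimum degree of $G$ to $R_d$, using properties (2), (3) and (5) of Lemma~\ref{19}. Fix a cluster $V_i$ and a vertex $v\in V_i$.

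By property (3), $d_{G'}(v)\ge d_G(v)-(d+\varepsilon)n\ge(\delta-d-\varepsilon)n$. We split the neighbours of $v$ in $G'$ according to the cluster containing them.
\begin{itemize}
\item At most $|V_0|\le\varepsilon n$ of them lie in $V_0$.
\item None lie in $V_i$, since $V_i$ is independent in $G'$ by property (4).
\item A neighbour in some $V_j$ ($j\ne i$) forces $d_{G'}(V_i,V_j)>0$. By property (5) this density is then at least $d$, so $V_iV_j\in E(R_d)$.
\end{itemize}
Each cluster has exactly $m$ vertices, so $v$ has at most $d_R(V_i)\,m$ neighbours in $V(G')\setminus V_0$. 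Combining,
\[
(\delta-d-\varepsilon)n\le d_{G'}(v)\le \varepsilon n+d_R(V_i)\,m .
\]

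Since the clusters are disjoint subsets of $V$, we have $km\le n$. Hence $d_R(V_i)\ge(\delta-d-2\varepsilon)n/m\ge(\delta-d-2\varepsilon)k$. If $\delta-d-2\varepsilon\le 0$ the bound holds trivially, so no case distinction is needed.

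The only subtle point is the direction of the inequality $n/m\ge k$. It uses $km\le n$ rather than $km\ge(1-\varepsilon)n$, and this is the inequality that gives the stated form without any loss. No step is a genuine obstacle; the argument is routine bookkeeping.
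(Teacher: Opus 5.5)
Your proof is correct. The paper states this fact without proof. Your argument supplies the standard justification: property (5) forces every pair to have $G'$-density $0$ or at least $d$, so each $G'$-neighbour of $v\in V_i$ outside $V_0$ lies in a cluster adjacent to $V_i$ in $R_d$, and you then finish using $km\le n$.
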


\subsection{Almost perfect tilings}\label{15}

We now proceed to the proofs of Lemma~\ref{5} and Lemma~\ref{6}, which will be established separately in the following.

\begin{proof}[Proof of Lemma \ref{5}]
Given constants $\delta, \zeta$ with $\delta > \frac{1}{2}$, define $\eta := \delta - \frac{1}{2}$ and choose parameters satisfying the hierarchy
$
\frac{1}{n} \ll \alpha \ll \frac{1}{N_0} \ll \varepsilon \ll \zeta, \delta, \eta.
$
Applying Lemma~\ref{19} to $G$ with $d := \frac{\eta}{4}$, we obtain a partition $\mathcal{P} = \{U_0\} \cup \{U_{i,j} \subseteq V_i : i \in [k], j \in [N_0]\}$ that refines the original partition $\{V_1, \dots, V_k\}$ of $G$, along with a spanning subgraph $G'$ satisfying conditions (a)--(e). Let $m := |U_{i,j}|$ for all $i \in [k]$ and $j \in [N_0]$. Define the reduced graph $R_d$ on the vertex set $\{U_{i,j} : i \in [k], j \in [N_0]\}$, where two vertices are adjacent if the corresponding pair has density at least $d$ in $G'$. For each $i \in [k]$, let $\mathcal{V}_i = \{U_{i,j} : j \in [N_0]\}$; then $\{\mathcal{V}_i : i \in [k]\}$ forms a partition of $V(R_d)$. By Fact~\ref{20}, we have ${\delta^*}(R_d) \geq \left(\delta - \frac{\eta}{2}\right)N_0 = \left(\frac{1}{2} + \frac{\eta}{2}\right)N_0$. 

To construct an almost perfect transversal $K_k$-tiling in $G$, we define an auxiliary transversal graph $H \cong K_{1,k-1}$ and utilize it to embed transversal copies of $K_k$ (see Claim~\ref{28}).
Given that ${\delta^*}(R_d) \geq \left(\frac{1}{2} + \frac{\eta}{2}\right) N_0$, there exists a perfect matching in the bipartite subgraph $R_d[\mathcal{V}_i, \mathcal{V}_{i+1}]$. This enables us to greedily select $N_0$ vertex-disjoint copies of $H$ that altogether cover all vertices of $R_d$. We denote by $\mathcal{H}$ this family of $N_0$ vertex-disjoint copies of $H$.

	\begin{claim}\label{28}
		For each copy of $H$ in $\mathcal{H}$, we can obtain a transversal $K_{k}$-tiling covering all but at most  $\frac{\zeta m}{2} $ vertices in the union of its clusters in $G$.
	\end{claim}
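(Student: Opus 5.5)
Fix a copy of $H\cong K_{1,k-1}$ in $\mathcal{H}$ with centre cluster $U_{1}$ (say, in $\mathcal{V}_1$) and leaves $U_2,\dots,U_k$ with $U_j\in\mathcal{V}_j$. By construction each pair $(U_1,U_j)$ is $\varepsilon$-regular in $G'$ with density at least $d$, while the pairs $(U_i,U_j)$ with $i,j\ge 2$ carry no density information. The plan is to repeatedly embed one transversal $K_k$ greedily inside the current leftover sets. Each copy uses only one vertex per cluster, so the sets shrink in lockstep. We stop when each leftover has size below $\frac{\zeta m}{2k}$, which leaves at most $\frac{\zeta m}{2}$ vertices uncovered in total.

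At a general stage, let $U_j'\subseteq U_j$ be the uncovered vertices. All the $U_j'$ have the same size $s\ge \frac{\zeta m}{2k}$, and since $\varepsilon\ll\zeta$ we have $s\ge \varepsilon m$ with plenty of room to spare. To find a transversal $K_k$ in $U_1'\cup\dots\cup U_k'$:

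\textbf{Step 1 (pick the centre).} By Fact~\ref{22} applied to each pair $(U_1,U_j)$ with $B=U_j'$, all but at most $(k-1)\varepsilon m$ vertices of $U_1$ have at least $(d-\varepsilon)s$ neighbours in every $U_j'$. Since $|U_1'|=s>(k-1)\varepsilon m$, we can choose such a vertex $w\in U_1'$.

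\textbf{Step 2 (pick the rest).} Set $N_j:=N_{G'}(w)\cap U_j'$ for $j\ge2$. These are $k-1$ sets, each lying in a distinct part $V_j$ and each of size at least $(d-\varepsilon)s\ge (d-\varepsilon)\frac{\zeta}{2k}m$. Since $m\ge (1-\varepsilon)n/N_0$ is linear in $n$ and $\alpha\ll 1/N_0,\varepsilon,\zeta,\eta$, this size exceeds $\alpha n$. Trimming each $N_j$ to exactly $\lceil\alpha n\rceil$ vertices, the hypothesis $\alpha^*_{k-1}(G)<\alpha n$ gives a copy of $K_{k-1}$ in $G$ with one vertex in each $N_j$. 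Together with $w$ this is a transversal $K_k$ in $G$, because $G'\subseteq G$ and $w$ is adjacent to all of the $N_j$.

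Remove this copy and iterate. Since one vertex leaves each $U_j'$ per round, the sets stay equal in size, and the argument runs until $s<\frac{\zeta m}{2k}$.

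The only real point to check is the constant hierarchy: we need $(d-\varepsilon)\zeta m/(2k)\ge\alpha n$, which holds because $\alpha$ is chosen after $N_0$. This is exactly why the hierarchy places $\alpha\ll 1/N_0$. The absence of regularity among the leaf clusters is not an obstacle, because the small-hole condition directly supplies the required $K_{k-1}$ among the leaves.
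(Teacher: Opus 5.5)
Your proof is correct and follows essentially the same route as the paper. Both use Fact~\ref{22} to pick a centre vertex with many neighbours in every leftover leaf set, apply $\alpha^*_{k-1}(G)<\alpha n$ to those neighbourhoods to obtain the $K_{k-1}$, and iterate until the leftovers fall below $\frac{\zeta m}{2k}$. Your bookkeeping is slightly more careful than the paper's: you allow up to $(k-1)\varepsilon m$ exceptional centre vertices rather than $\varepsilon m$, and you trim the neighbourhoods to a common size $\lceil \alpha n\rceil$, as the definition of a partite hole requires.
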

	
	\begin{proof}[Proof.]
    
Consider a copy of $H$ with vertex set say $V(H) = \{U_{1,1}, U_{2,2}, \ldots, U_{k,k}\}$, where $\{U_{1,1}, U_{j,j}\} \in E(H)$ for each $j \in [k] \setminus \{1\}$. This implies that the pair $(U_{1,1}, U_{j,j})$ is $(\varepsilon,d)$-regular in $G'$. To complete the argument, it suffices to show that for any collection of subsets $Z_j \subseteq U_{j,j}$ with $|Z_j| \geq \frac{\zeta m}{2k}$ for all $j \in [k]$, there exists a transversal copy of $K_k$ containing exactly one vertex from each $Z_j$.

Since $|Z_j| \geq \frac{\zeta m}{2k} \geq \varepsilon m$ for each $j \in [k]$, Fact~\ref{22} implies the existence of a subset $Z_1' \subseteq Z_1$ with $|Z_1'| \geq |Z_1| - \varepsilon m$ such that every vertex $x\in Z_1'$ has at least $(d - \varepsilon)|Z_j|$ neighbors in $Z_j$ for every $j \in [k] \setminus \{1\}$. Define $Z_j' = N(x) \cap Z_j$ for $j = 2,\ldots,k$. By the assumption that $a_{k-1}^*(G) < \alpha n$ and the lower bound $|Z_j'| \geq (d - \varepsilon)|Z_j| \geq \alpha n$ for each $j \in [k] \setminus \{1\}$, there exists at least one copy of $K_{k-1}$ among the sets $Z_2', \ldots, Z_k'$. Combining this with the vertex $x$ yields a transversal copy of $K_k$ in $\bigcup_{i=1}^k Z_i$. Repeating this procedure, we obtain a transversal $K_k$-tiling that covers all but at most $\frac{\zeta m}{2}$ vertices in the union of its clusters in $G$.
	\end{proof} 
	
This completes the proof, since the union of the $K_k$-tilings over all copies of $H$ in $\mathcal{H}$ leaves at most
\[
|U_0| + |\mathcal{H}| \cdot \frac{\zeta m}{2} \leq \varepsilon n + \frac{\zeta n}{2} \leq \zeta n
\]
vertices uncovered, as required.
\end{proof}	

To prove Lemma \ref{6}, we require the following proposition \ref{20021117}.

\begin{proposition}{\rm ({\cite{Han_Hu_Ping_Wang_Wang_Yang_2024}}, Proposition 4.7)}\label{20021117}
		Given an integer $k\ge2$ and a positive  constant $\alpha\leq\frac{1}{2}$,  let $G=(V_{1},\ldots,V_{k},E)$ be a spanning subgraph of the $n$-blow-up of $C_k$ with  $\alpha^*_{2}(G)<\alpha n$. 
		For any integers $i,j$ with $1\leq i<j\leq k$ and a collection of subsets $X_{s}\subseteq V_s$ with $s\in [i,j]$, if $\vert X_{i}\vert$, $\vert X_{j}\vert\geq\alpha n$ and $\vert X_{\ell}\vert\geq2\alpha n$ for $\ell\in [i+1,j-1]$ {\rm(}possibly empty{\rm)}, then there exists a transversal path   $x_{i}x_{i+1}\dots x_{j-1}x_{j}$   where $x_{s}\in X_{s}$ for $s\in [i,j]$.
	\end{proposition}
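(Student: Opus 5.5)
We argue by induction on $j-i$, always trying to embed the path greedily from one end. When $j=i+1$ there are no interior sets, and $|X_i|,|X_j|\ge\alpha n$ with $\alpha^*_2(G)<\alpha n$ directly yields an edge $x_ix_{i+1}$ with $x_i\in X_i$, $x_{i+1}\in X_{i+1}$. Here we use that $\{i,i+1\}$ is an edge of $C_k$, so $X_i,X_{i+1}$ lie in adjacent parts and form a legitimate $2$-partite hole candidate; shrinking to subsets of size exactly $\lceil\alpha n\rceil$ if needed, they cannot form a hole.

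For the inductive step with $j\ge i+2$, we pass from the end $X_i$ to the first interior set by a ``good vertex'' reduction. Let $B\subseteq X_{i+1}$ be the set of vertices with no neighbour in $X_i$. If $|B|\ge\alpha n$, then $(X_i,B)$ would be a $2$-partite hole of size at least $\alpha n$, contradicting $\alpha^*_2(G)<\alpha n$. Hence $|B|<\alpha n$, and the set $X'_{i+1}:=X_{i+1}\setminus B$ has $|X'_{i+1}|\ge 2\alpha n-\alpha n=\alpha n$. Every vertex of $X'_{i+1}$ has a neighbour in $X_i$.

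We then apply the induction hypothesis to the index range $[i+1,j]$ with sets $X'_{i+1},X_{i+2},\dots,X_j$. The new endpoints satisfy $|X'_{i+1}|\ge\alpha n$ and $|X_j|\ge\alpha n$, and all interior sets still have size at least $2\alpha n$. This gives a transversal path $x_{i+1}\dots x_j$. Finally, we choose $x_i\in X_i$ to be a neighbour of $x_{i+1}$, which exists by the definition of $X'_{i+1}$. The vertices are automatically distinct because they lie in different parts, so $x_ix_{i+1}\dots x_j$ is the required transversal path.

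The only delicate point is the bookkeeping of sizes: it must be checked that removing the bad set from an interior set of size $2\alpha n$ leaves at least $\alpha n$ vertices. This is exactly why the interior sets need size $2\alpha n$ while the endpoints only need $\alpha n$. We must also check that holes are only considered between consecutive parts $V_s,V_{s+1}$, which are adjacent in $C_k$ because $i<j$ and the path indices are consecutive. No other obstacle arises.
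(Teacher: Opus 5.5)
Your proof is correct: discarding the fewer than $\alpha n$ vertices of $X_{i+1}$ with no neighbour in $X_i$ leaves more than $\alpha n$ vertices, which is enough to recurse. You also rightly note that holes are only meaningful between parts adjacent in $C_k$. The paper does not reprove this proposition but imports it from the cited work, and your induction is the standard greedy neighbourhood-propagation argument for statements of this type.
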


Next, we prove Lemma \ref{6}.

\begin{proof}[Proof of Lemma \ref{6}]

Given $k \in \mathbb{N}$ and constants $\delta, \zeta$ with $\delta > \frac{2}{k}$, define $\eta := \delta - \frac{2}{k}$ and choose parameters according to the hierarchy
$
\frac{1}{n} \ll \alpha \ll \frac{1}{N_0} \ll \varepsilon \ll \zeta, \delta, \eta.
$
Let $G = (V_1, \ldots, V_k, E)$ be a spanning subgraph of the $n$-blow-up of $C_k$ satisfying ${\delta^*}(G) \geq \delta n$ and $\alpha^*_2(G) < \alpha n$. Applying Lemma~\ref{19} to $G$ with $d := \frac{\eta}{4}$ yields a partition $\mathcal{P} = \{U_0\} \cup \{U_{i,j} \subseteq V_i : i \in [k], j \in [N_0]\}$ that refines the original partition $\{V_1, \dots, V_k\}$, along with a spanning subgraph $G'$ satisfying conditions (a)--(e). Let $m := |U_{i,j}|$ for all $i \in [k]$ and $j \in [N_0]$. Define the reduced graph $R_d$ on the vertex set $\{U_{i,j} : i \in [k], j \in [N_0]\}$, where two vertices are adjacent if the corresponding $\varepsilon$-regular pair has density at least $d$ in $G'$. For each $i \in [k]$, let $\mathcal{V}_i = \{U_{i,j} : j \in [N_0]\}$; then $\{\mathcal{V}_i : i \in [k]\}$ forms a partition of $V(R_d)$. By Fact~\ref{20}, we obtain ${\delta^*}(R_d) \geq \left(\delta - \frac{\eta}{2}\right)N_0 = \left(\frac{2}{k} + \frac{\eta}{2}\right)N_0$.
	
To construct an almost perfect transversal $C_k$-tiling in $G$, we define an auxiliary graph $H$ on $k$ vertices, which may take one of two forms: either $P^*_3$, consisting of a 3-vertex path together with $k-3$ isolated vertices, or $M_2$, consisting of two disjoint edges together with $k-4$ isolated vertices. Let $\mathcal{H}$ denote a family of vertex-disjoint copies of $H$.

Suppose that \(\mathcal{H}\) is a maximal tiling.
For each $i$, let $P_i= V(\mathcal{H})\cap V_i$, $L_i:=V_i\setminus P_i$ and \(L:=\bigcup_{i=1}^k L_i\). Note that  $\vert V_i \vert=N_0$ and $|P_i|=|P_j|$ for any $i,j\in [k]$.

	\begin{claim}\label{1117}
$L_i=\emptyset$ for any $i \in [k].$
	\end{claim}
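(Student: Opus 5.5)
The plan is a counting argument on the reduced graph $R_d$, combined with a local switching step that contradicts the maximality of $\mathcal{H}$. Throughout, copies of $H$ in $\mathcal{H}$ are taken to be transversal: each uses exactly one cluster of every $\mathcal{V}_i$, and their edges join consecutive classes $\mathcal{V}_i,\mathcal{V}_{i+1}$ (indices mod $k$).

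First I would observe that $|P_i|=|P_j|$ and $|\mathcal{V}_i|=N_0$ give $|L_i|=|L_j|$ for all $i,j$. So if the claim fails, every $L_i$ is nonempty. In that case I fix one leftover cluster $x_i\in L_i$ for each $i\in[k]$.

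Suppose $\{x_1,\dots,x_k\}$ already spans a copy of $H$ in $R_d$, that is, two edges $x_{i-1}x_i,x_ix_{i+1}$ forming a path or two disjoint edges $x_ix_{i+1},x_jx_{j+1}$. Then this transversal $k$-set can be added to $\mathcal{H}$, contradicting maximality. More generally, I would first use maximality to show that the leftover clusters of $L$ induce very few edges of $R_d$.

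I expect this to hold in the strong sense that $R_d[L]$ has no path on three vertices and no two disjoint edges between consecutive classes. Otherwise one can choose the remaining $x_j$'s arbitrarily in $L_j$ (they are nonempty) and add a new copy of $H$. Hence, apart from at most $O(1)$ exceptional edges, all $R_d$-neighbours of $x_i$ lie in $P$. By ${\delta^*}(R_d)\ge(\frac{2}{k}+\frac{\eta}{2})N_0$, each $x_i$ has at least $(\frac{2}{k}+\frac{\eta}{2})N_0$ neighbours in each of $P_{i-1}$ and $P_{i+1}$. So the number of edges between $X:=\{x_1,\dots,x_k\}$ and $P$ is at least
\[
2k\Big(\frac{2}{k}+\frac{\eta}{2}\Big)N_0-O(1)=(4+k\eta)N_0-O(1).
\]
Since $|\mathcal{H}|=|P_1|\le N_0$, averaging gives a copy $H'\in\mathcal{H}$ that receives at least $5$ edges from $X$.

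The final step, which I expect to be the main obstacle, is a case analysis showing that $V(H')\cup X$ (two clusters in each class) can be split into two transversal $k$-sets, each spanning a $P^*_3$ or an $M_2$. Replacing $H'$ by these two copies enlarges $\mathcal{H}$, which is the desired contradiction.

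Write $V(H')=\{y_1,\dots,y_k\}$ with $y_i\in\mathcal{V}_i$. A split is determined by a set $S\subseteq[k]$ of indices where $x_i$ and $y_i$ swap roles. I would use two facts. First, each edge from $X$ to $H'$ has the form $x_iy_{i\pm1}$. Second, $H'$ contributes at least two internal edges (a path $y_{j-1}y_jy_{j+1}$ or two disjoint edges). With five such cross edges, pigeonhole over the $k$ cyclic positions gives a configuration of one of two kinds. In the first kind, some $x_i$ is adjacent to both $y_{i-1}$ and $y_{i+1}$, giving a $P^*_3$ centred at $x_i$ that uses only one position on each side. In the second kind, there are two cross edges at far-apart positions, forming an $M_2$.

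In either case I would place these edges in one part of the split, and check that the complementary part retains the internal edges of $H'$ unless they overlap the positions used. The five edges (rather than four) should give enough slack to avoid the overlapping positions. If direct pigeonhole is too tight, I would strengthen the count using the $\frac{k\eta}{2}N_0$ surplus to find an $H'$ receiving more edges, say $6$ or $7$, at the cost of a slightly larger but still bounded case check.
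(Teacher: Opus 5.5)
Your overall architecture is fine. A per-row bound $e(X,V(H'))\le 4$, averaged over a uniformly random transversal row $X\subseteq L$, gives exactly the paper's bound $e(V(H'),L)\le 4|L|/k$. But the step you defer is therefore precisely Fact~\ref{0485}, which the paper proves by the long case analysis of Appendix~\ref{claim}. It is not a routine check, and your sketch of it does not work.

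The threshold has no slack. Take $H'$ the path $y_1y_2y_3$ plus isolated $y_4,\dots,y_k$, let $X$ span no edge, and take cross edges $x_1y_2$, $y_1x_2$, $x_2y_3$, $y_3x_4$. Checking all $o\in\{A,B\}^k$ shows that no re-split exists with four edges.

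Now add the fifth edge $x_4y_5$ ($k\ge 5$); your verification still fails.
\begin{itemize}
\item The only cross edge avoiding $y_1,y_2,y_3$ is $x_4y_5$. So no re-split has one part keeping the path of $H'$ while the other part carries cross edges.
\item The only $x_i$ with two $y$-neighbours are $x_2$ and $x_4$, and both of their paths use $y_1$ or $y_3$. So your ``first kind'' always overlaps $H'$.
\end{itemize}
A re-split does exist: $\{x_1,y_2,y_3,y_4,x_5,\dots,x_k\}$ (path $x_1y_2y_3$) and $\{y_1,x_2,x_3,x_4,y_5,\dots,y_k\}$ (disjoint edges $y_1x_2$, $x_4y_5$). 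However, it breaks up $H'$ and mixes one of its edges with cross edges, which is neither of your two kinds. Covering all such configurations requires a genuine exhaustive analysis: $P_3^*$ and $M_2$ with its two edges at every relative distance, and cross edges at isolated vertices. This analysis is the content of the claim, and it is missing.

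Your count also contains an error. Maximality only rules out a \emph{transversal} $P_3^*$ or $M_2$ inside $L$. It does not give each $x_i$ only $O(1)$ neighbours in $L$; for instance $R_d[L_1,L_2]$ may be complete bipartite. The bound you can actually prove is $e(X,P)\ge(4+k\eta)N_0-2|L_1|$. Divided by $N_0$ (your bound $|\mathcal{H}|\le N_0$), it falls below $4$ once $2|L_1|>k\eta N_0$.

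This is repairable, as follows.
\begin{itemize}
\item At most two cyclically consecutive pairs $(L_i,L_{i+1})$ carry edges, and their supports in the shared class are disjoint. Hence $\sum_{x\in X}d_L(x)\le 2|L_1|$ for every transversal $X\subseteq L$; this is the row version of the paper's $e(L)\le |L_1|^2$.
\item Using $|\mathcal{H}|=N_0-|L_1|$ exactly then gives $e(X,P)-4|\mathcal{H}|\ge k\eta N_0+2|L_1|>0$.
\end{itemize}
That surplus is all you get. For $\eta<1/k$ and small $|L_1|$ it only guarantees a copy receiving five edges. So your fallback of finding a copy with six or seven edges is unavailable, and the switching lemma must be proved at exactly five.
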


\begin{proof}[Proof.]
Suppose for contradiction that $\vert L_i \vert>0$.
We first claim that $e(L)\le|L_1|^2$. Indeed, if $e(L)>|L_1|^2$, then there exist $L_i,L_j,L_p,L_q$ (maybe $j=p$) such that $e(L_i, L_j)>0$ and $e(L_p, L_q)>0$. 
Pick $u_s\in L_s$ with $s\in [k]$ such that $u_iu_j,u_pu_q\in E(G)$. Then we can obtain either a new $P^*_3$ or $M_2$ in $L$, contradicting to the maximality of \(\mathcal{H}\).  
Next we will count the number of edges between \(P\) and \(L\). Note that a lower bound is obtained as follows.
\[
\begin{aligned}
e(P,L)= \sum_{v\in L}\bigl(d(v)-d_L(v)\bigr)
&= \sum_{v\in L} d(v) - 2e(L) \\
&\ge 2\left(\frac{2}{k}N_0 + \frac{\eta}{2} N_0\right)|L| - 2\left(\frac{|L|}{k}\right)^2\\
&= |L|\left(\frac{4}{k}N_0 - \frac{2|L|}{k^2} + \eta N_0\right) \\
&= |L|\left(\frac{4}{k}\cdot \frac{|V(\mathcal{H})|+|L|}{k} - \frac{2|L|}{k^2} + \eta N_0\right) \\
&> |L|\left(\frac{4}{k^2}|V(\mathcal{H})| + \eta N_0\right).
\end{aligned}
\]

Without loss of generality, we assume that $\mathcal{H}$ consists of $p$ vertex-disjoint copies of $P^*_3$ and $q$ vertex-disjoint copies of $M_2$ for some $p,q\in\mathbb{N}$. Obviously, we have $p+q=|V(\mathcal{H})|/k$.

	\begin{fact}\label{0485}
For any copy in  \(\mathcal{H}\) of either $P_3^*$ or $M_2$, it holds that $e(P_3^*,L)\le 4|L|/k$ and $e(M_2,L)\le 4|L|/k$.
	\end{fact}

We defer its proof in Appendix \ref{claim}.
Taking it for granted, we have 
\[
\begin{aligned}
e(P,L)
&\le pe(P_3^*,L)+qe(M_2,L)\\
&\le\frac{4|P||L|}{k^2},
\end{aligned}
\]
yielding a contradiction with the lower bound. 
Therefore, $\vert L_i \vert  = 0$. 
\end{proof}

	\begin{claim}\label{1118}
		For each copy of $H\in \mathcal{H}$, we can obtain a transversal $C_{k}$-tiling covering all but at most  $\frac{\zeta m}{2} $ vertices in the union of its clusters in $G$.
	\end{claim}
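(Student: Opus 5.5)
Fix a copy of $H$ in $\mathcal{H}$. Its $k$ clusters lie one in each $\mathcal{V}_i$; write them as $W_1,\dots,W_k$ with $W_i\subseteq V_i$. The edges of $H$ are regular pairs of density at least $d$ in $G'$. There are two cases: either $H\cong P^*_3$, with edges $W_{a-1}W_a$ and $W_aW_{a+1}$ for consecutive indices on the cycle, or $H\cong M_2$, with edges $W_aW_{a+1}$ and $W_bW_{b+1}$. (I am assuming, as the construction intends, that the edges of $H$ correspond to edges of $C_k$, i.e.\ to consecutive parts.)

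As in Claim~\ref{28}, it suffices to prove the following: for any $Z_j\subseteq W_j$ with $|Z_j|\ge \frac{\zeta m}{2k}$ for all $j\in[k]$, there is a transversal $C_k$ meeting each $Z_j$ in one vertex. Given this, we remove such cycles greedily. This keeps the removed amounts equal across the parts, since every cycle is transversal. While every $Z_j$ still has at least $\zeta m/(2k)$ vertices we can continue, so at the end at most $\zeta m/2$ vertices remain uncovered.

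To find one such cycle, we use regularity to pick the vertices on the edges of $H$, then close the cycle with Proposition~\ref{20021117}.

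\emph{Case $P^*_3$.} By Fact~\ref{22}, applied with $|Z_{a\pm1}|\ge\varepsilon m$, all but at most $2\varepsilon m$ vertices $x\in Z_a$ have at least $(d-\varepsilon)|Z_{a\pm1}|$ neighbours in each of $Z_{a-1}$ and $Z_{a+1}$. Fix such an $x$ and set $X_{a-1}=N(x)\cap Z_{a-1}$ and $X_{a+1}=N(x)\cap Z_{a+1}$. Both have size at least $(d-\varepsilon)\zeta m/(2k)\ge 2\alpha n$, because $\alpha\ll 1/N_0$ and $m\ge(1-\varepsilon)n/N_0$. Now relabel the cycle so that it starts at $a+1$ and ends at $a-1$, avoiding $a$. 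This is a path of consecutive parts $a+1,a+2,\dots,a-1$ of length $k-2$. Apply Proposition~\ref{20021117} with endpoint sets $X_{a+1},X_{a-1}$ and interior sets $Z_\ell$, each of size at least $2\alpha n$. The resulting path together with $x$ is a transversal $C_k$.

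\emph{Case $M_2$.} Since $(Z_a,Z_{a+1})$ is $(\varepsilon,d)$-regular, it contains many edges. We need an edge $yz$ with $y\in Z_a$, $z\in Z_{a+1}$ such that the remaining pieces can be connected. The natural route is to take $X_a\subseteq Z_a$ of typical vertices and the neighbourhood of $X_a$ in $Z_{a+1}$. However, Proposition~\ref{20021117} only gives a path between \emph{sets}, and we must close up through specific vertices $y$ and $z$.

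The cleaner approach is to find a transversal path $P$ from part $a+1$ around to part $a$, of the form $z=x_{a+1},\dots,x_a=y$, with $yz$ an edge. We build it in pieces:
\begin{enumerate}
\item Choose $z\in Z_{a+1}$ typical towards $Z_a$, and choose $w\in Z_{b+1}$ typical towards $Z_b$.
\item Use Proposition~\ref{20021117} on the segment from $a+1$ to $b$, with first set $\{z\}$. This fails because it is a single vertex. So instead start the segment at $a+2$ with the set $N(z)\cap Z_{a+2}$, which requires $\alpha^*_2$ and the min degree of $G$.
\end{enumerate}
The main obstacle is exactly this: the regular pair gives large neighbourhoods only along edges of $H$, while on the other consecutive pairs we only have $\alpha^*_2(G)<\alpha n$.

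I would resolve this by choosing vertices outward from the edges of $H$ and running Proposition~\ref{20021117} on the two gaps between the $M_2$-edges. The sets feeding into these gaps are neighbourhoods of typical vertices, obtained by Fact~\ref{22} via regularity on the $H$-edges. The difficulty is that for the endpoint of a gap we need a large set in which every vertex is adjacent to the fixed endpoint of the $H$-edge, and this needs care. My remedy is to reverse the order of choice. For each gap, first choose a large set $X_{b+1}\subseteq Z_{b+1}$ of vertices typical towards $Z_b$ (available by Fact~\ref{22}), and likewise $X_a\subseteq Z_a$ typical towards $Z_{a+1}$. Then apply Proposition~\ref{20021117} on parts $b+1,\dots,a$ with end sets $X_{b+1}$ and $X_a$; this gives a path whose endpoints $y\in Z_a$ and $w\in Z_{b+1}$ are typical. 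Finally pick $z\in N(y)\cap Z_{a+1}$ and $u\in N(w)\cap Z_b$, and close the remaining gap $a+1,\dots,b$ by a path from $z$ to $u$. This last step is the delicate one: it needs a path between two fixed vertices. I would handle it by keeping $N(y)\cap Z_{a+1}$ and $N(w)\cap Z_b$ as sets and applying Proposition~\ref{20021117} between these two sets. That requires $y$ and $w$ to be chosen jointly, so I would select many candidate paths for the first gap and argue by counting that some pair of endpoint-neighbourhood sets connects.
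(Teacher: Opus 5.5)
Your proposal is correct and follows the paper's route. The reduction to a single transversal $C_k$ and the $P^*_3$ case are identical to the paper's. Your final ``reversed order'' argument for $M_2$ is also exactly the paper's: apply Proposition~\ref{20021117} on one gap between typical end sets, then on the other gap between the neighbourhoods of the two resulting endpoints. You handle the two gaps in the opposite order and for arbitrary positions of the two edges, whereas the paper treats only the configuration with one isolated part between them.

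Your closing worry is unfounded, though. Once the typical endpoints $y$ and $w$ are fixed, $N(y)\cap Z_{a+1}$ and $N(w)\cap Z_b$ each have size at least $(d-\varepsilon)\zeta m/(2k)\ge\alpha n$. The second gap uses parts disjoint from the first path, so Proposition~\ref{20021117} applies directly and the cycle closes through the edges at $y$ and $w$. No joint choice or counting is needed, so you can drop that sentence and the earlier false starts.
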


\begin{proof}[Proof.]
For each copy of $H$ in $\mathcal{H}$, we consider separately the cases $H = M_2$ and $H = P^*_3$.

When $H = M_2$, we may assume without loss of generality that the two disjoint edges are separated by exactly one isolated vertex. Specifically, let $V(M_2) = \{U_{1,1}, U_{2,2}, \ldots, U_{k,k}\}$ with edges $\{U_{1,1}, U_{2,2}\}$ and $\{U_{4,4}, U_{5,5}\}$. Consequently, the pairs $(U_{1,1}, U_{2,2})$ and $(U_{4,4}, U_{5,5})$ are $(\varepsilon, d)$-regular in $G'$. To complete the argument, it suffices to show that for any subsets $Z_i \subseteq U_{i,i}$ with $|Z_i| \geq \frac{\zeta m}{2k}$ for each $i \in [k]$, there exists a transversal copy of $C_k$ containing exactly one vertex from each $Z_i$. Since $|Z_i| \geq \frac{\zeta m}{2k} \geq \varepsilon m$ for $i \in \{1,2,4,5\}$, Fact~\ref{22} implies the existence of a subset $Z_2' \subseteq Z_2$ with $|Z_2'| \geq |Z_2| - \varepsilon m$ such that every vertex in $Z_2'$ has at least $(d - \varepsilon)|Z_1|$ neighbors in $Z_1$, and similarly, a subset $Z_4' \subseteq Z_4$ with $|Z_4'| \geq |Z_4| - \varepsilon m$ such that every vertex in $Z_4'$ has at least $(d - \varepsilon)|Z_5|$ neighbors in $Z_5$. By the assumption $\alpha^*_{\mathrm{2}}(G) < \alpha n$ and noting that $|Z_2'|, |Z_4'| \geq \frac{\zeta m}{2k} - \varepsilon m \geq \alpha n$ and $|Z_i| \geq \frac{\zeta m}{2k} \geq 2\alpha n$ for every $i \in [k]$, Proposition~\ref{20021117} allows us to construct a copy of $P_3$ with one vertex in each of $Z_2'$, $Z_3$, and $Z_4'$.

Arbitrarily select a $P_3=\{u,w,v\}$ with $u \in Z_2'$, $w\in Z_3$ and $v \in Z_4'$. Define $Q_1 = N(u) \cap Z_1$ and $Q_2 = N(v) \cap Z_5$. Then $|Q_1|, |Q_2| \geq (d - \varepsilon) \cdot \frac{\zeta m}{2k} \geq \alpha n$, and note that $|Z_i| \geq \frac{\zeta m}{2k} \geq 2\alpha n$ for every $i \in [k]$. Again applying Proposition~\ref{20021117} with $X_i = Q_1$ and $X_j = Q_2$, we obtain a transversal path $T$ of length $(k-3)$ with endpoints $u' \in Q_1$ and $v' \in Q_2$. Then $T\cup T'$ is a transversal copy of $C_k$ in $\bigcup_{i=1}^k Z_i$, where $T'=u'uwvv'$. 
%By adding the edges $u'u$, $uw$, $wv$ and $vv'$, we can construct a transversal copy of $C_k$ in $\bigcup_{i=1}^k Z_i$.

When $H = P^*_3$, suppose we have a copy of $P^*_3$ in the reduced graph. Without loss of generality, assume its vertex set is $V(P^*_3) = \{U_{1,1}, U_{2,2}, \dots, U_{k,k}\}$ with edges $\{U_{1,1}, U_{2,2}\}$ and $\{U_{2,2}, U_{3,3}\}$. Then the pairs $(U_{1,1}, U_{2,2})$ and $(U_{2,2}, U_{3,3})$ are $(\varepsilon, d)$-regular in $G'$. To complete the argument, it suffices to show that for any subsets $Z_i \subseteq U_{i,i}$ with $|Z_i| \geq \frac{\zeta m}{2k}$ for each $i \in [k]$, there exists a transversal copy of $C_k$ containing exactly one vertex from each $Z_i$. Since $|Z_i| \geq \frac{\zeta m}{2k} \geq \varepsilon m$ for $i \in \{1,2,3\}$, Fact~\ref{22} implies that there exists a subset $Z_2' \subseteq Z_2$ with $|Z_2'| \geq |Z_2| - 2\varepsilon m$ such that every vertex in $Z_2'$ has at least $(d - \varepsilon)|Z_1|$ neighbors in $Z_1$ and at least $(d - \varepsilon)|Z_3|$ neighbors in $Z_3$.

  Choose a vertex $v \in Z_2'$, and define $Q_1 = N(v) \cap Z_1$ and $Q_2 = N(v) \cap Z_3$. Then $|Q_1|, |Q_2| \geq (d - \varepsilon) \cdot \frac{\zeta m}{2k} \geq \alpha n$, and we note that $|Z_i| \geq \frac{\zeta m}{2k} \geq 2\alpha n$ for all $i \in [k]$. Applying Proposition~\ref{20021117} with $X_i = Q_1$ and $X_j = Q_2$, we obtain a transversal path of length $k - 1$ with endpoints $u' \in Q_1$ and $v' \in Q_2$. By adding the edges $\{v, u'\}$ and $\{v, v'\}$, we obtain a transversal copy of $C_k$ in $\bigcup_{i=1}^k Z_i$. Repeating this procedure, we obtain a transversal $C_k$-tiling that covers all but at most $\frac{\zeta m}{2}$ vertices in the union of its clusters in $G$.
\end{proof}

	This would finish the proof as  the union of these $C_{k}$-tilings taken over all copies of $H$ in $\mathcal{H}$ would leave at most
	\begin{equation}
		\vert U_{0}\vert+\frac{\zeta m }{2}\vert {\mathcal{H}} \vert
		\leq \varepsilon n+\frac{\zeta n }{2}\leq\zeta n \nonumber
	\end{equation}
	vertices uncovered.
\end{proof}

\bibliographystyle{abbrv}
\bibliography{references}

@article{1991Szemer,
author = {Koml\'os, J. and Simonovits, M.},
year = {1996},
pages = {295-352},
title = {{S}zemer\'edi's Regularity Lemma and Its Applications in Graph Theory},
volume = {2},
journal = {Combinatorica}
}

@article{balogh2016triangle,
  title={Triangle factors of graphs without large independent sets and of weighted graphs},
  author={Balogh, J. and Molla, T. and Sharifzadeh, M.},
  journal={Random Struct. Algorithms},
  volume={49},
  number={4},
  pages={669--693},
  year={2016},
  publisher={Wiley Online Library}
}

@article{nenadov2020ramsey,
  title={On a {R}amsey--{T}ur{\'a}n Variant of the {H}ajnal--{S}zemer{\'e}di Theorem},
  author={Nenadov, R. and Pehova, Y.},
  journal={SIAM J. Discrete Math.},
  volume={34},
  number={2},
  pages={1001--1010},
  year={2020},
  publisher={SIAM}
}

@article{Han2017Decision,
  title={Decision problem for perfect matchings in dense $k$-uniform hypergraphs},
  author={Han, J.},
  journal={Trans. Am. Math. Soc.},
  volume={369},
  number={7},
  pages={5197-5218},
  year={2017},
}

@article{Magyar2002TripartiteVO,
  title={Tripartite version of the {C}orr{\'a}di--{H}ajnal theorem},
  author={ Magyar, C. and  Martin, R.},
  journal={Discrete Math.},
  year={2002},
  volume={254},
  pages={289-308}
}

@article{montgomery2019spanning,
  title={Spanning trees in random graphs},
  author={Montgomery, R.},
  journal={Adv. Math.},
  volume={356},
  pages={106793},
  year={2019},
  publisher={Elsevier}
}

@article{ergemlidze2022transversal,
  title={Transversal ${C}_k$-factors in subgraphs of the balanced blow-up of ${C}_k$},
  author={Ergemlidze, B. and Molla, T.},
  journal={Comb. Probab. Comput.},
  volume={31},
  number={6},
  pages={1031--1047},
  year={2022},
  publisher={Cambridge University Press}
}

@article{johansson2000triangle,
 title={Triangle-factors in a balanced blown-up triangle},
 author={Johansson, R.},
 journal={Discrete Math.},
 volume={211},
 number={1-3},
 pages={249--254},
 year={2000},
 publisher={Elsevier}
}

@article{1999Variants,
 title={Variants of the {H}ajnal--{S}zemer\'edi Theorem},
 author={ Fischer, E. },
 journal={J. Graph Theory},
 year={1999},
 volume={31},
 pages={275--282},
}

@article{KeevashMycroft,
	title={A multipartite {H}ajnal--{S}zemer{\'e}di theorem},
	author={Keevash, P. and Mycroft, R.},
	journal={J. Comb. Theory Ser. B},
	volume={114},
	pages={187--236},
	year={2015},
	publisher={Elsevier}
}

@article{LoMarkstrom,
	title={A multipartite version of the {H}ajnal--{S}zemer{\'e}di theorem for graphs and hypergraphs},
	author={Lo, A. and Markstr{\"o}m, K.},
	journal={Comb. Probab. Comput.},
	volume={22},
	number={1},
	pages={97--111},
	year={2013},
	publisher={Cambridge University Press}
}

@article{Han_Hu_Ping_Wang_Wang_Yang_2024,
title={Spanning trees in graphs without large bipartite holes}, 
volume={33},
DOI={10.1017/S0963548323000378},
number={3},
journal={Comb. Probab. Comput.}, 
author={Han, Jie and Hu, Jie and Ping, Lidan and Wang, Guanghui and Wang, Yi and Yang, Donglei},
year={2024}, 
pages={270–285}}

@article{martin2008,
  author  = {Martin, R. and Szemer{\'e}di, E.},
  title   = {Quadripartite version of the {H}ajnal--{S}zemer{\'e}di theorem},
  journal = {Discrete Math.},
  volume  = {308},
  pages   = {4337--4360},
  year    = {2008}
}

@article{HMWY,
  author  = {Han, J. and Morris, P. and Wang, G. and Yang, D.},
  title   = {A {R}amsey--{T}ur{\'a}n Theory for Tilings in Graphs},
  journal = {Random Struct. Algorithms},
  volume  = {64},
  year    = {2024},
  pages   = {94--124}
}

@article{chen2025cliquefactorsgraphslowkellindependence,
  author  = {Chen, Ming and Han, Jie and Yang, Donglei},
  title   = {Clique-factors in graphs with low {$K_{\ell}$}-independence number},
  year    = {2025},
  journal ={\rm{arXiv preprint arXiv:2509.16851}}

}

@article{Han2016,
  author  = {J. Han},
  title   = {Near perfect matchings in $k$-uniform hypergraphs {II}},
  journal = {SIAM J. Discrete Math.},
  year    = {2016},
  volume  = {30},
  pages   = {1453--1469}
}

@article{KeevashMycroft2015,
  author  = {P. Keevash and R. Mycroft},
  title   = {A geometric theory for hypergraph matching},
  journal = {Memoirs Am. Math. Soc.},
  year    = {2015},
  volume  = {223},
  number  = {1098},
  pages   = {vi+95}
}

@article{Rodl2009,
  author  = {V. R\"{o}dl and A. Ruci{\'n}ski and E. Szemer{\'e}di},
  title   = {Perfect matchings in large uniform hypergraphs with large minimum collective degree},
  journal = {J. Comb. Theory Ser. A},
  year    = {2009},
  volume  = {116},
  number  = {3},
  pages   = {613--636}
}

@article{HanHuWangYang2023CliqueFactors,
  author  = {Han, Jie and Hu, Ping and Wang, Guanghui and Yang, Donglei},
  title   = {Clique-factors in graphs with sublinear $\ell$-independence number},
  journal = {Comb. Probab. Comput.},
  year    = {2023},
  volume  = {32},
  number  = {4},
  pages   = {665--681},
}

@article{CHANG2023301,
title = {Embedding clique-factors in graphs with low $\ell$-independence number},
journal = {J. Comb. Theory Ser. B},
volume = {161},
pages = {301-330},
year = {2023},
author = {Fan Chang and Jie Han and Jaehoon Kim and Guanghui Wang and Donglei Yang},

}

@article{CHEN2024373,
title = {H-factors in graphs with small independence number},
journal = {J. Comb. Theory Ser. B},
volume = {169},
pages = {373-405},
year = {2024},
author = {Ming Chen and Jie Han and Guanghui Wang and Donglei Yang},
}

@article{KnierimSu2021,
  author  = {Knierim, Charlotte and Su, Pascal},
  title   = {${K}_r$-factors in graphs with low independence number},
  journal = {J. Comb. Theory Ser. B},
  volume  = {148},
  pages   = {60--83},
  year    = {2021},
}

\appendix
\section{Proof of Fact \ref{0485}}\label{claim}

In the proof of Fact \ref{0485}, we perform a case analysis on the number of edges between  $L$  and the non-isolated vertices inside each copy of either $P_3^*$ or $M_2$ in $\mathcal{H}$.
Let $P^* = u_2u_3u_4 \cup \{u_1, u_5, \ldots, u_k\}$ be a copy of $P^*_3$ in $\mathcal{H}$.  
Let $M = u_2u_3 \cup u_4u_5 \cup \{u_1, u_6, \ldots, u_k\}$ and  
$M' = u_2u_3 \cup u_5u_6 \cup \{u_1, u_4, u_7, \ldots, u_k\}$ be two copies of $M_2$ in $\mathcal{H}$.
Let $\overrightarrow{e}(u_i, L)$ ( or $\overleftarrow{e}(u_i, L)$) denote the number of edges between $u_i$ and $L_{i+1}$ (resp. $L_{i-1}$), where $L_0 = L_k$ and $L_{k+1} = L_1$.  
Define $e(u_i, L) = \overrightarrow{e}(u_i, L) + \overleftarrow{e}(u_i, L)$.
Let $I_{P^*}$, $I_M$, and $I_{M'}$ denote the sets of non-isolated vertices of $P^*$, $M$, and $M'$ in $\mathcal{H}$, respectively.  
Let $J_{P^*}$, $J_M$, and $J_{M'}$ denote the sets of isolated vertices of $P^*$, $M$, and $M'$ in $\mathcal{H}$, respectively.

\begin{claim}\label{P^*}
    Let $P^*$ be the set defined above. Then the following statements hold:
\begin{enumerate}
    \item[$(1)$.] If $\overleftarrow{e}(u_3,L) > 0$ and $\overrightarrow{e}(u_3,L) > 0$, then for each $i \in \{2,4\}$, at most one of $\overleftarrow{e}(u_i, L) > 0$ and $\overrightarrow{e}(u_i, L) > 0$ holds.
    \item[$(2)$.] Assume $e(I_{P^*}, L) > 3|L|/k$. If $\overleftarrow{e}(u_3, L) > 0$ and $\overrightarrow{e}(u_3, L) > 0$, then at most one of the conditions $\overrightarrow{e}(u_2, L) > 0$ and $\overleftarrow{e}(u_4, L) > 0$ holds. 
    Moreover, we have at most one of the following four conditions holds:
 $\overleftarrow{e}(u_2, L) > 0,  \overrightarrow{e}(u_2, L) > 0, \overleftarrow{e}(u_4, L) > 0, \overrightarrow{e}(u_4, L) > 0.$
    \label{1.1}    
    \item[$(3)$.] If $\overleftarrow{e}(u_2, L) > 0$ and $\overrightarrow{e}(u_2, L) > 0$, then $\overleftarrow{e}(u_3, L) = \overrightarrow{e}(u_4, L) = 0$.
    \label{1.2}
\end{enumerate}

\end{claim}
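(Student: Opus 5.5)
We argue each of the three statements by contradiction with the maximality of $\mathcal{H}$: whenever the assumed edges between vertices of $P^*$ and $L$ exist, we exhibit a rearrangement that replaces $P^*$ by two vertex-disjoint copies of $P_3^*$ or $M_2$ using the $k$ vertices of $P^*$ plus $k$ vertices of $L$. This gives a tiling with more members and contradicts maximality. Recall that everything lives in the reduced graph, which is $k$-partite with parts $\mathcal{V}_1,\dots,\mathcal{V}_k$ and edges only between consecutive parts. A copy of $P_3^*$ needs a path $xyz$ with $x,y,z$ in three consecutive parts, and a copy of $M_2$ needs two disjoint edges in two disjoint pairs of consecutive parts. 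In each case the remaining isolated vertices must fill out a transversal. The basic resource is that, since $L_i\neq\emptyset$ for all $i$ and $|L_i|=|L_j|$, we can always complete a partial structure by taking unused vertices of $L$ and of $J_{P^*}$ as isolated vertices. One should check that all $L_i$ are nonempty in the setting of Claim~\ref{1117}. This holds because $|P_i|$ is the same for every $i$ and $|V_i|=N_0$, so $|L_i|=|L_1|>0$ for all $i$.

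For (1), suppose $u_3$ has neighbours $a\in L_2$ and $b\in L_4$, and, say, $u_2$ has neighbours $c\in L_1$ and $d\in L_3$. Then $a u_3 b$ is a path on parts $2,3,4$, which gives a $P_3^*$. Also $c u_2 d$ is a path on parts $1,2,3$, giving a second $P_3^*$; alternatively we use one edge $cu_2$ together with some other disjoint edge to get an $M_2$. The two structures use disjoint vertex sets. We then complete both to transversals using $u_1,u_4,\dots,u_k$ for one and fresh vertices of $L$ for the other, which is possible because each $L_i$ is nonempty. The symmetric case with $u_4$ is identical.

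For (2), assume $u_3$ has neighbours in both $L_2$ and $L_4$. The edge count $e(I_{P^*},L)>3|L|/k$ forces a vertex of $I_{P^*}$ to have many neighbours in $L$. If both $\overrightarrow{e}(u_2,L)>0$ and $\overleftarrow{e}(u_4,L)>0$ hold, we take $u_2$ joined to some $x\in L_3$ and $u_4$ joined to some $y\in L_3$. Together with $u_3$'s neighbours this gives two disjoint paths or edges, for example $u_2x$ and $au_3$ with $a\in L_4$ and $y$ handled similarly. These form an $M_2$ plus a $P_3^*$. The second part extends this to all four conditions via a case split on which two of them hold, each case combined with (1).

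For (3), if $u_2$ has neighbours in both $L_1$ and $L_3$, then $u_2$ alone centres a new $P_3$. If additionally $u_3$ had a neighbour in $L_2$, we would obtain a second disjoint edge, giving two structures. Likewise, if $u_4$ had a neighbour in $L_5$, the edge $u_4w$ with $w\in L_5$ together with $u_2$'s path would do the same.

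The main obstacle is (2). Showing the hypothesis $e(I_{P^*},L)>3|L|/k$ is used correctly requires verifying that the swaps keep everything vertex-disjoint. It also requires checking that enough distinct vertices of $L$ are available, possibly with $|L_i|\ge 2$ needed. This is where the count (many neighbours) is used to pick distinct neighbours. Here I would first try choosing neighbours greedily, using that a vertex with more than $|L|/k$ neighbours in $L$ must have neighbours in both adjacent parts, or at least two in one part.
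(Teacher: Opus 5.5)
The real gap is in (2), in exactly the case that needs the hypothesis $e(I_{P^*},L)>3|L|/k$; parts (1) and (3) are fine. Your (1) is the paper's swap. In (3), the second copy is the path $v_2u_3u_4$ (resp.\ $u_3u_4v_5$), which uses the edge $u_3u_4$ of $P^*$. A single new edge at $u_3$ or $u_4$ is not by itself a copy of $P_3^*$ or $M_2$.

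\textbf{Why your construction for (2) fails.} Take $x\in N(u_2)\cap L_3$ and $y\in N(u_4)\cap L_3$ with $x\neq y$. Your configuration does not give two copies. The vertices $x,y,u_3$ all lie in $V_3$, so no transversal copy contains two of them. This rules out putting $u_2x$ with $au_3$, or $u_2x$ with $u_4y$, in one copy.

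More generally, every edge you have at hand meets $V_3$: $u_2u_3$, $u_3u_4$, the two edges from $u_3$ to $L_2$ and $L_4$, $u_2x$ and $u_4y$. So each new copy would have to be a $P_3^*$ centred at its unique $V_3$-vertex. Only $u_3$ has two usable neighbours, so you get at most one new copy. Your proposed use of the count, via a single vertex with more than $|L|/k$ neighbours in $L$, does not change this.

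\textbf{The missing step.} You need a pigeonhole in $L_3$ between the two vertices $u_2$ and $u_4$. By (1), $\overrightarrow{e}(u_2,L)>0$ and $\overleftarrow{e}(u_4,L)>0$ force $\overleftarrow{e}(u_2,L)=\overrightarrow{e}(u_4,L)=0$. Also $e(u_3,L)\le |L_2|+|L_4|=2|L|/k$. Hence
\[
|N(u_2)\cap L_3|+|N(u_4)\cap L_3| = e(I_{P^*},L)-e(u_3,L) > \frac{|L|}{k}=|L_3|,
\]
so $u_2$ and $u_4$ have a common neighbour $v_3\in L_3$. Now replace $P^*$ by the two copies of $P_3^*$
\[
v_2u_3v_4\cup\{u_1,u_5,\dots,u_k\} \quad\text{and}\quad u_2v_3u_4\cup\{v_1,v_5,\dots,v_k\}.
\]

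\textbf{The ``moreover'' part.} Your ``$M_2$ plus $P_3^*$'' shape is right here, but you should write the cases out rather than defer to a case split. Two conditions at the same vertex are excluded by (1). Suppose instead $u_2$ has a neighbour $v_i\in L_i$ and $u_4$ has a neighbour $v_j\in L_j$ with $i\neq j$, i.e.\ $(i,j)\in\{(1,3),(1,5),(3,5)\}$. Take $v_2u_3v_4\cup\{u_1,u_5,\dots,u_k\}$ together with the $M_2$ formed by $u_2v_i\cup u_4v_j$ and the remaining $v_s$. The pair $i=j=3$ is exactly the first part. When $k=4$ we have $L_5=L_1$, so the pair $(1,5)$ must also be handled by the same pigeonhole, now in $L_1$. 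Throughout, one vertex from each $L_s$ suffices, so $|L_i|\ge 2$ is never needed.
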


\begin{proof}
For $(1)$, assume that $\overleftarrow{e}(u_3, L) > 0$ and $\overrightarrow{e}(u_3, L) > 0$, and suppose that both $e(u_2, L) > 0$ and $e(u_4, L) > 0$ hold. Take $v_2 \in N(u_3) \cap L_2$ and $v_4 \in N(u_3) \cap L_4$.

By symmetry, assume that $\overleftarrow{e}(u_2, L) > 0$ and $\overrightarrow{e}(u_2, L) > 0$. Take $v_1 \in N(u_2) \cap L_1$ and $v_3 \in N(u_2) \cap L_3$. We can obtain a new $\{P_3^*, M_2\}$-tiling $\mathcal{H}^*$ by replacing $P^*$ with $v_2 u_3 v_4 \cup \{u_1, u_5, \dots, u_k\}$ and $v_1 u_2 v_3 \cup \{u_4, v_5, \dots, v_k\}$, where $v_s \in L_s$. Clearly, $|V(\mathcal{H}^*)| = |V(\mathcal{H})| + k$, which contradicts the maximality of $\mathcal{H}$. Hence, for each $i \in \{2,4\}$, at most one of $\overleftarrow{e}(u_i, L) > 0$ and $\overrightarrow{e}(u_i, L) > 0$ holds.

For $(2)$, if $\overrightarrow{e}(u_2, L) > 0$ and $\overleftarrow{e}(u_4, L) > 0$, then by $e(I_{P^*}, L) > 3|L|/k$, we have $L_3 \cap N(u_2) \cap N(u_4) \neq \emptyset$. Pick $v_3 \in L_3 \cap N(u_2) \cap N(u_4)$. We can obtain a new $\{P^*_3, M_2\}$-tiling $\mathcal{H}'$ by replacing $P^*$ with $v_2 u_3 v_4 \cup \{u_1, u_5, \dots, u_k\}$ and $u_2 v_3 u_4 \cup \{v_1, v_5, \dots, v_k\}$, where $v_s \in L_s$. Clearly, $|V(\mathcal{H}')| = |V(\mathcal{H})| + k$ contradicts the maximality of $\mathcal{H}$.
For the moreover part,  Suppose for contradiction we can pick $v_i \in N(u_2) \cap L$ and $v_j \in N(u_4) \cap L$. We can obtain a new $\{P^*_3, M_2\}$-tiling $\mathcal{H}''$ by replacing $P^*$ with $v_2 u_3 v_4 \cup \{u_1, u_5, \dots, u_k\}$ and $u_2 v_i \cup u_4 v_j \cup \bigl( \{v_1, \dots, v_k\} \setminus \{v_i, v_j, v_2, v_4\} \bigr)$, where $v_s \in L_s$. Clearly, $|V(\mathcal{H}'')| = |V(\mathcal{H})| + k$ contradicts the maximality of $\mathcal{H}$.

\begin{figure*}[htbp]
    \centering
    \includegraphics[trim=4cm 5.6cm 4cm 4.2cm,clip, scale=0.5]{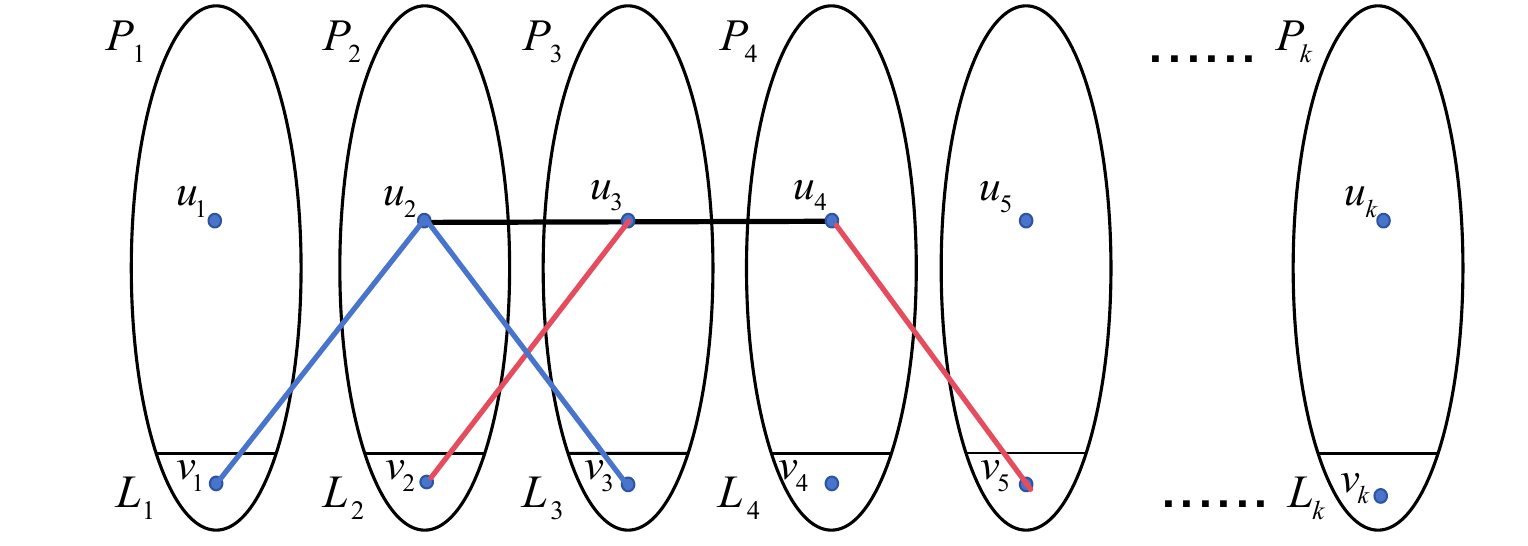}
    \caption{ $P^*=u_2u_3u_4\cup\{u_1,u_5,\cdots, u_k\}$}
    \label{fig2}
\end{figure*}

For $(3)$, suppose $\overleftarrow{e}(u_2, L) > 0$ and $\overrightarrow{e}(u_2, L) > 0$. Take $v_1 \in N(u_2) \cap L_1$ and $v_3 \in N(u_2) \cap L_3$. Now suppose that either $\overleftarrow{e}(u_3, L) > 0$ or $\overrightarrow{e}(u_4, L) > 0$. Pick $v_2 \in N(u_3) \cap L_2$ or $v_5 \in N(u_4) \cap L_5$; see Figure~\ref{fig2}. Then we obtain a new $\{P^*_3, M_2\}$-tiling $\mathcal{H}'$ by replacing $P^*$ either with $v_1 u_2 v_3 \cup \{v_4, u_5, \dots, u_k\}$ and $v_2 u_3 u_4 \cup \{u_1, v_5, \dots, v_k\}$, or with $v_1 u_2 v_3 \cup \{v_4, u_5, \dots, u_k\}$ and $u_3 u_4 v_5 \cup \{u_1, v_2, v_6, \dots, v_k\}$, where each $v_s \in L_s$. Clearly, $|V(\mathcal{H}')| = |V(\mathcal{H})| + k$, which contradicts the maximality of $\mathcal{H}$.
\end{proof}

\begin{claim}\label{M}
 Let $M$ be the set defined above. Then the following conclusions hold:
\begin{enumerate}%[label=p1]
    \item[$(1)$.] If $\overleftarrow{e}(u_2,L)>0$ or $\overrightarrow{e}(u_3,L)>0$, then $\overleftarrow{e}(u_4,L)=\overrightarrow{e}(u_5,L)=0$.\label{2.1}
    \item[$(2)$.] If $\overleftarrow{e}(u_3,L)>0$ and $\overrightarrow{e}(u_3,L)>0$, then $e(u_2,L)=0$.\label{2.2}
    \item[$(3)$.] If $\overleftarrow{e}(u_2,L)>0$ and $\overrightarrow{e}(u_2,L)>0$, then $\overleftarrow{e}(u_3,L)=0$.\label{2.3}
\end{enumerate}
\end{claim}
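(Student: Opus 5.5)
Each of the three statements of Claim~\ref{M} will be proved by contradiction with the maximality of $\mathcal{H}$, as in Claim~\ref{P^*}. In each case we take the assumed edges to $L$ and build a $\{P^*_3,M_2\}$-tiling $\mathcal{H}^*$ that replaces $M=u_2u_3\cup u_4u_5\cup\{u_1,u_6,\dots,u_k\}$ by two transversal copies of $P^*_3$ or $M_2$. These two copies use the vertices of $M$ together with one vertex $v_s\in L_s$ for each $s\in[k]$, where $v_s$ is either prescribed as a neighbour or arbitrary, and distinct copies use distinct $v_s$. Then $|V(\mathcal{H}^*)|=|V(\mathcal{H})|+k$, a contradiction. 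Throughout we use the adjacency structure of $C_k$: a vertex of $V_i$ only has neighbours in $V_{i-1}\cup V_{i+1}$, and $\overleftarrow{e}$, $\overrightarrow{e}$ are as defined before Claim~\ref{P^*}. We also use that all $L_s$ are nonempty, since $|P_i|=|P_j|$ and some $L_i\neq\emptyset$.

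For $(1)$, suppose $\overleftarrow{e}(u_2,L)>0$ and pick $v_1\in N(u_2)\cap L_1$, or suppose $\overrightarrow{e}(u_3,L)>0$ and pick $v_4\in N(u_3)\cap L_4$. Suppose in addition that $\overleftarrow{e}(u_4,L)>0$ (take $v_3\in N(u_4)\cap L_3$) or $\overrightarrow{e}(u_5,L)>0$ (take $v_6\in N(u_5)\cap L_6$). In every combination the edge $u_2u_3$ grows to a $P_3$: either $v_1u_2u_3$ or $u_2u_3v_4$. Likewise $u_4u_5$ grows to $v_3u_4u_5$ or $u_4u_5v_6$. The two resulting $P_3$'s, padded with isolated vertices, become two transversal copies of $P^*_3$. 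The first copy contains $u_2,u_3$ and its third path vertex from $L$, with all remaining positions filled by $u_s$'s (including $u_4,u_5$ as isolated vertices where needed) or $v_s$'s. For instance, take $v_1u_2u_3$ together with $\{u_4,u_5,\dots,u_k\}$, and take $v_3u_4u_5$ together with fresh vertices... Here one must check that each copy is transversal and that the $u_s$ and $v_s$ are used exactly once overall. The cleanest assignment is: the first copy is the $P_3$ on $\{u_2,u_3\}\cup\{v\}$ plus the $u_s$ in the remaining parts, except in the parts used by the second $P_3$; the second copy is the other $P_3$ plus $v_s$ elsewhere. This needs a short case-by-case check, which I expect to be the main bookkeeping obstacle. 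The danger is that both new path vertices lie in the same part, e.g.\ $v_4$ and $v_3$ with $k$ small, or that the two paths overlap in parts. Since the parts $\{1,2,3,4\}$ and $\{3,4,5,6\}$ may overlap, I will instead let the second copy be the $P_3$ through $u_4u_5$ with $v_s$ in every other part, and let the first copy be the $P_3$ through $u_2u_3$ with $u_s$ in every other part. Whenever a part index is used by a path in one copy, the corresponding vertex of the other type ($u_s$ or $v_s$) goes to the other copy, and the disjointness of the chosen $v$'s is ensured since they lie in distinct parts or can be chosen distinct.

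For $(2)$ and $(3)$ we use the $P^*_3$ trick directly, mimicking Claim~\ref{P^*}$(1),(3)$. In $(2)$, take $v_2\in N(u_3)\cap L_2$ and $v_4\in N(u_3)\cap L_4$, so that $v_2u_3v_4$ is a $P_3$. Suppose also that $u_2$ has a neighbour $v_1\in L_1$ or $v_3\in L_3$. Then replace $M$ by $v_2u_3v_4\cup\{u_1,u_5,\dots,u_k\}$, which is a $P^*_3$ (or, better, keep $u_4u_5$ to get an $M_2$-type copy, if the parts allow it), and by $u_2v_1\cup\{\text{remaining } v_s, u_4\}$ or the analogous copy with $v_3$. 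The second copy contains a single edge $u_2v_1$, so we augment it with the edge $u_4u_5$ moved over to make an $M_2$. Thus the first copy is $v_2u_3v_4\cup\{u_1,u_5',\dots\}$ with $v_5,\dots,v_k$ as padding, and the second copy is $v_1u_2\cup u_4u_5\cup\{v_3,u_6,\dots,u_k\}$, which is an $M_2$ since its edges $v_1u_2$ and $u_4u_5$ are disjoint. In $(3)$, take $v_1,v_3\in N(u_2)$, giving the $P_3$ $v_1u_2v_3$, and suppose $v_2\in N(u_3)\cap L_2$. Then use the copies $v_1u_2v_3\cup\{\dots\}$ and $v_2u_3\cup u_4u_5\cup\{\dots\}$ (an $M_2$), distributing the remaining $u_s$ and $v_s$ transversally. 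Again the only real work is checking transversality and disjointness in each configuration.
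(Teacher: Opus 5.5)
Your proposal is correct and follows the paper's argument. In (1) you extend both edges of $M$ to paths through the assumed neighbours and form two $P_3^*$ copies; your swap rule (each part gives its $u_s$ to one copy and its $v_s$ to the other) reproduces exactly the paper's assignments. In (2) and (3) you use the same $P_3^*$/$M_2$ pairs as the paper ($v_2u_3v_4$ with $v_iu_2\cup u_4u_5$, and $v_1u_2v_3$ with $v_2u_3\cup u_4u_5$), and you differ only in how the padding vertices of parts $6,\dots,k$ are split. You should delete your first tentative assignment in (1), which reuses $u_4,u_5$ in both copies, in favour of the swap rule you state afterwards.
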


\begin{proof}
For $(1)$, assume that $\overrightarrow{e}(u_3, L) = |L|/k$. Choose $v_4 \in N(u_3) \cap L_4$. 
If $\overleftarrow{e}(u_4,L)>0$, choose $v_3 \in N(u_4) \cap L_3$; see Figure~\ref{fig3}.
Then we obtain a new $\{P^*_3, M_2\}$-tiling $\mathcal{H}'$ by replacing $M$
with $u_2 u_3 v_4 \cup \{u_1, v_5, u_6, \dots, u_k\}$ and $v_3 u_4 u_5 \cup \{v_1, v_2, v_6, \dots, v_k\}$.
If $\overrightarrow{e}(u_5,L)>0$, choose $v_6 \in N(u_5) \cap L_6$.
Then we obtain a new $\{P^*_3, M_2\}$-tiling $\mathcal{H}'$ by replacing $M$  with $u_2 u_3 v_4 \cup \{u_1, v_5, u_6, \dots, u_k\}$ and $u_4 u_5 v_6 \cup \{v_1, v_2, v_3, v_7, \dots, v_k\}$, where $v_s \in L_s$. Clearly, $|V(\mathcal{H}')| = |V(\mathcal{H})| + k$ contradicts the maximality of $\mathcal{H}$. The proof is similar for the case $\overleftarrow{e}(u_2, L) > 0$.

\begin{figure*}[htbp]
    \centering
    \includegraphics[trim=4cm 5.7cm 3.7cm 4cm,clip, scale=0.5]{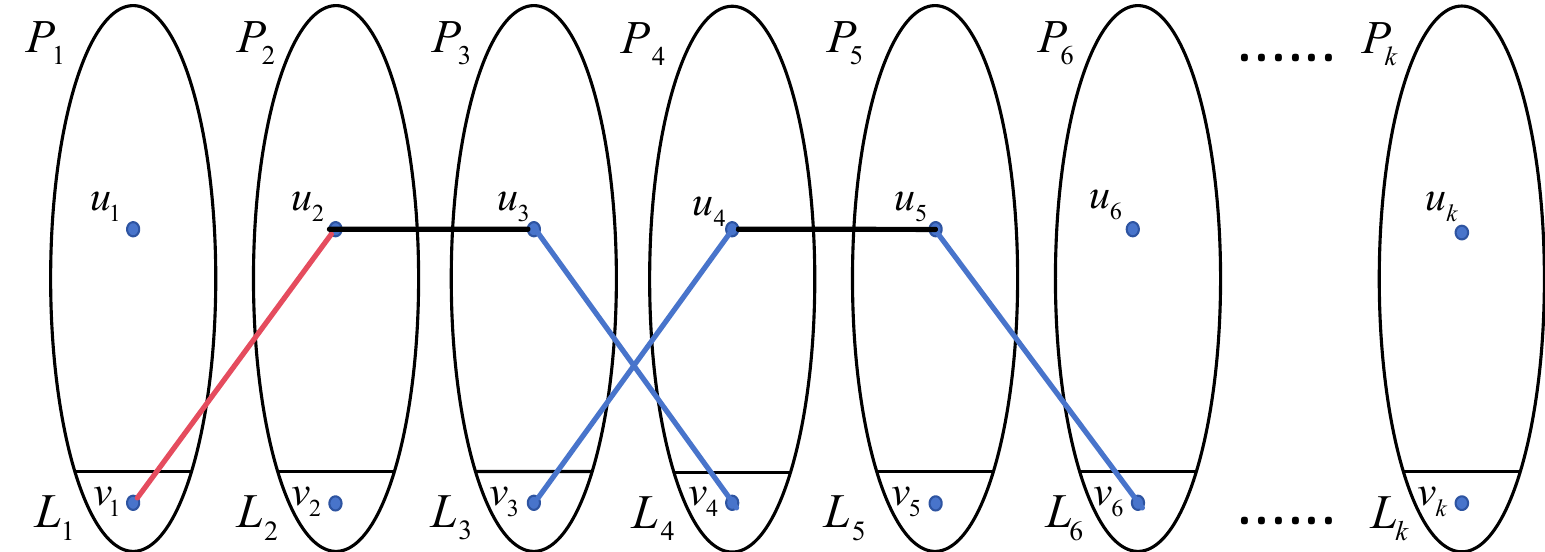}
    \caption{ $M=u_2u_3\cup u_4u_5\cup\{u_1,u_6,u_7\ldots,u_k\}$}
    \label{fig3}
\end{figure*}

\begin{figure*}[htbp]
    \centering
    \includegraphics[trim=4cm 5.7cm 3.7cm 4cm,clip, scale=0.5]{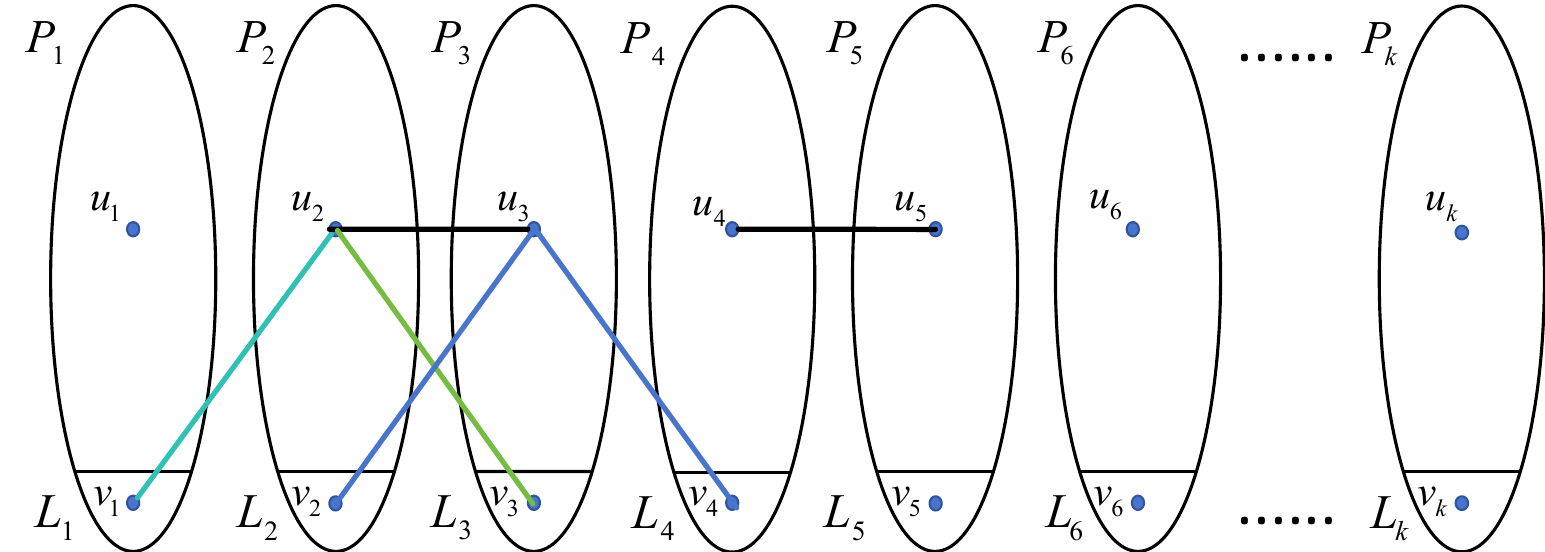}
    \caption{ $M=u_2u_3\cup u_4u_5\cup\{u_1,u_6,u_7\ldots,u_k\}$}
    \label{fig4}
\end{figure*}

For $(2)$, assume that $\overleftarrow{e}(u_3, L) > 0$ and $\overrightarrow{e}(u_3, L) > 0$. Take $v_2 \in N(u_3) \cap L_2$ and $v_4 \in N(u_3) \cap L_4$. Suppose that there exists a vertex $v_i \in N(u_2) \cap L_i$ for some $i \in \{1,3\}$; see Figure~\ref{fig4}. Then we obtain a new $\{P^*_3, M_2\}$-tiling $\mathcal{H}''$ by replacing $M$ with $v_2 u_3 v_4 \cup \{u_1, v_5, u_6, \dots, u_k\}$ and $v_i u_2 \cup u_4 u_5 \cup \bigl( \{v_1, v_3, v_6, \dots, v_k\} \setminus \{v_i\} \bigr)$, where $v_s \in L_s$. Clearly, $|V(\mathcal{H}'')| = |V(\mathcal{H})| + k$ contradicts the maximality of $\mathcal{H}$.

\begin{figure*}[htbp]
    \centering
    \includegraphics[trim=4cm 5.7cm 3.7cm 4cm,clip, scale=0.5]{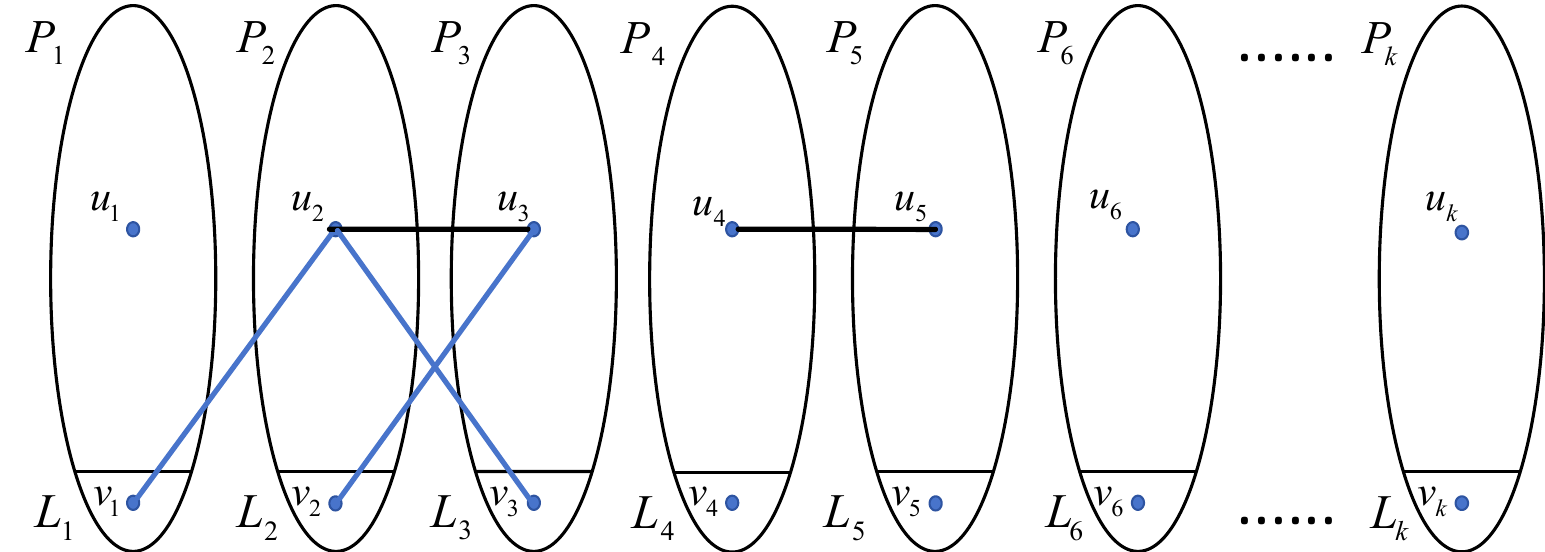}
    \caption{ $M=u_2u_3\cup u_4u_5\cup\{u_1,u_6,u_7\ldots,u_k\}$}
    \label{fig5}
\end{figure*}

For $(3)$, assume that $\overleftarrow{e}(u_2, L) > 0$ and $\overrightarrow{e}(u_2, L) > 0$. Take $v_1 \in N(u_2) \cap L_1$ and $v_3 \in N(u_2) \cap L_3$. Suppose that there exists a vertex $v_2 \in N(u_3) \cap L_2$; see Figure~\ref{fig5}. Then we obtain a new $\{P^*_3, M_2\}$-tiling $\mathcal{H}'''$ by replacing $M$ with $v_1 u_2 v_3 \cup \{v_4, v_5, u_6, \dots, u_k\}$ and $v_2 u_3 \cup u_4 u_5 \cup \{u_1, v_6, \dots, v_k\}$, where $v_s \in L_s$. Clearly, $|V(\mathcal{H}''')| = |V(\mathcal{H})| + k$ contradicts the maximality of $\mathcal{H}$.
\end{proof}

\begin{claim}\label{M'}
Let $M'$ be the set as above. Then the following conclusions hold:
\begin{enumerate}%[label=p1]
    \item[$(1)$.] If $\overleftarrow{e}(u_2,L)>0$, then $\overleftarrow{e}(u_5,L)=\overrightarrow{e}(u_6,L)=0$; If $\overrightarrow{e}(u_3,L)>0$, then $\overrightarrow{e}(u_6,L)=0$.\label{3.1}
    \item[$(2)$.] If $\overleftarrow e(u_3,L)>0$ and $\overrightarrow e(u_3,L)>0$, then $e(u_2,L)=0$.\label{3.2}
    \item[$(3)$.] If $\overleftarrow e(u_2,L)>0$ and $\overrightarrow e(u_2,L)>0$, then $e(u_3,L)=0$.\label{3.3}
\end{enumerate}
\end{claim}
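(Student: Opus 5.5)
Each part follows the same template as Claims~\ref{P^*} and~\ref{M}. We assume the stated configuration of edges from $\{u_2,u_3,u_5,u_6\}$ into $L$ and also the forbidden one. We then pick witnesses $v_s\in L_s$, using that $|L_s|=|L|/k>0$ for every $s$, since $|P_i|$ is the same for all $i$ and $L_i\neq\emptyset$. Finally we exhibit two vertex-disjoint copies from $\{P_3^*,M_2\}$ that replace $M'$ and use its $k$ vertices together with one vertex from each $L_s$. The new tiling has $|V(\mathcal{H})|+k$ vertices, which contradicts the maximality of $\mathcal{H}$. Recall that $M'=u_2u_3\cup u_5u_6\cup\{u_1,u_4,u_7,\dots,u_k\}$. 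Each new copy must be transversal, with exactly one vertex in each part, and its edges must be between cyclically consecutive parts.

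For (1), first suppose $\overleftarrow e(u_2,L)>0$, and take $v_1\in N(u_2)\cap L_1$. If also $\overleftarrow e(u_5,L)>0$, take $v_4\in N(u_5)\cap L_4$. We replace $M'$ by $v_1u_2u_3\cup\{u_4,v_5,v_6,\ldots\}$, a $P_3^*$ whose remaining parts are filled with $v_s$'s, and by $v_4u_5u_6\cup\{u_1,\ldots\}$, a $P_3^*$ whose remaining parts are filled with the leftover $u_s$ and $v_s$. Each part then receives exactly one vertex in each copy. If instead $\overrightarrow e(u_6,L)>0$, take $v_7\in N(u_6)\cap L_7$ and use $u_5u_6v_7$ in place of $v_4u_5u_6$. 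This needs $k\ge 7$, and otherwise indices are read cyclically. The second assertion is analogous. If $\overrightarrow e(u_3,L)>0$ with $v_4\in N(u_3)\cap L_4$, and $\overrightarrow e(u_6,L)>0$ with $v_7$, we use $u_2u_3v_4$ and $u_5u_6v_7$.

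For (2), take $v_2\in N(u_3)\cap L_2$ and $v_4\in N(u_3)\cap L_4$, together with $v_i\in N(u_2)\cap L_i$ for some $i\in\{1,3\}$. We form $v_2u_3v_4$ as a $P_3^*$, completed by $u_1,u_5,u_6,\ldots$ (keeping the edge $u_5u_6$ is fine but not needed). We also form $v_iu_2\cup u_5u_6$ as an $M_2$, completed by the remaining $v_s$. We check that the parts $\{1,3\}$, $\{2\}$ and $\{5,6\}$ are disjoint, so each copy is transversal.

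For (3), take $v_1\in N(u_2)\cap L_1$ and $v_3\in N(u_2)\cap L_3$, together with a neighbor $v_j$ of $u_3$ in $L_2$ or $L_4$. We form $v_1u_2v_3$ as a $P_3^*$, and either $v_2u_3\cup u_5u_6$ or $u_3v_4\cup u_5u_6$ as an $M_2$, each completed by the unused vertices. In the case $j=4$ we must check that $u_3v_4$ and $u_5u_6$ still form two disjoint edges, in parts $\{3,4\}$ and $\{5,6\}$, which they do.

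The main obstacle is the bookkeeping rather than any idea. For every swap we must verify that each of the two new copies takes exactly one vertex from each $V_s$, and that no vertex $u_s$ or $v_s$ is used twice. We also need $k$ large enough that indices such as $7$ are distinct from $1$ modulo $k$. For small $k$ (such as $k=6$, where $u_7=u_1$) we handle the cyclic wraparound separately, or note that the corresponding statement becomes vacuous or is absorbed into the other cases.
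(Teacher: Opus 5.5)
You use the same local swap template as the paper. The paper omits the proof as "similar to Claim~\ref{M}" and writes out only the case $\overrightarrow{e}(u_3,L)=0$ of (3); your $j=4$ construction there, $v_1u_2v_3$ together with $u_3v_4\cup u_5u_6$, is exactly the paper's. Your swaps for (1) (when $k\ge 7$) and for (3) are correct.

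\textbf{The gap is in (2).} The replacement you write there is not valid.
\begin{itemize}
\item You complete the $P_3^*$ copy $v_2u_3v_4$ with $u_5,u_6$, and you also use the edge $u_5u_6$ in the $M_2$ copy $v_iu_2\cup u_5u_6$. So $u_5$ and $u_6$ are used twice.
\item The $M_2$ copy cannot be completed by ``the remaining $v_s$'', because $v_4$ is already in the $P_3^*$. Its slot in $V_4$ must be $u_4$, which you never place; $v_5$ and $v_6$ are never placed either.
\end{itemize}
This is exactly the double-use check you named as the main obstacle. Since $u_5u_6$ is the only known edge available as the second edge of the $M_2$, it must go there, and the parts $V_5,V_6$ of the $P_3^*$ must take $v_5,v_6$. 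The correct replacement is
\[
v_2u_3v_4\cup\{u_1,v_5,v_6,u_7,\ldots,u_k\}
\quad\text{and}\quad
v_iu_2\cup u_5u_6\cup\bigl(\{v_1,v_3,u_4,v_7,\ldots,v_k\}\setminus\{v_i\}\bigr).
\]
For both $i=1$ and $i=3$, each copy is transversal and every $u_s$ and $v_s$ is used exactly once.

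\textbf{Your closing remark on small $k$ is also unsupported.} Take $k=6$ in the first assertion of (1) with $\overrightarrow{e}(u_6,L)>0$. Then $L_7=L_1$, so both witnesses lie in $L_1$. Your swap, with paths $v_1u_2u_3$ and $u_5u_6v_1'$ and $u_1$ left out, works only if $N(u_2)\cap L_1$ and $N(u_6)\cap L_1$ contain distinct vertices. Suppose instead both equal a single vertex $x$. Then no rearrangement of $V(M')\cup L$ that uses only the known edges $u_2u_3$, $u_5u_6$, $xu_2$, $xu_6$ produces two disjoint copies. So this case is neither vacuous nor absorbed into the others. You should state that this sub-case of (1) is proved for $k\ge 7$; the paper does not treat it separately either.
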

\begin{proof}
The proof is similar to the proof of Claim \ref{M} and thus omitted except $\overrightarrow{e}(u_3,L)=0$ in case (3).

Now assume that $\overleftarrow{e}(u_2, L) > 0$ and $\overrightarrow{e}(u_2, L) > 0$. Take $v_1 \in N(u_2) \cap L_1$ and $v_3 \in N(u_2) \cap L_3$. Suppose that there exists a vertex $v_4 \in N(u_3) \cap L_4$; see Figure~\ref{fig0126}.
\begin{figure*}[htbp]
    \centering
    \includegraphics[trim=4cm 5.7cm 3.7cm 4cm,clip, scale=0.5]{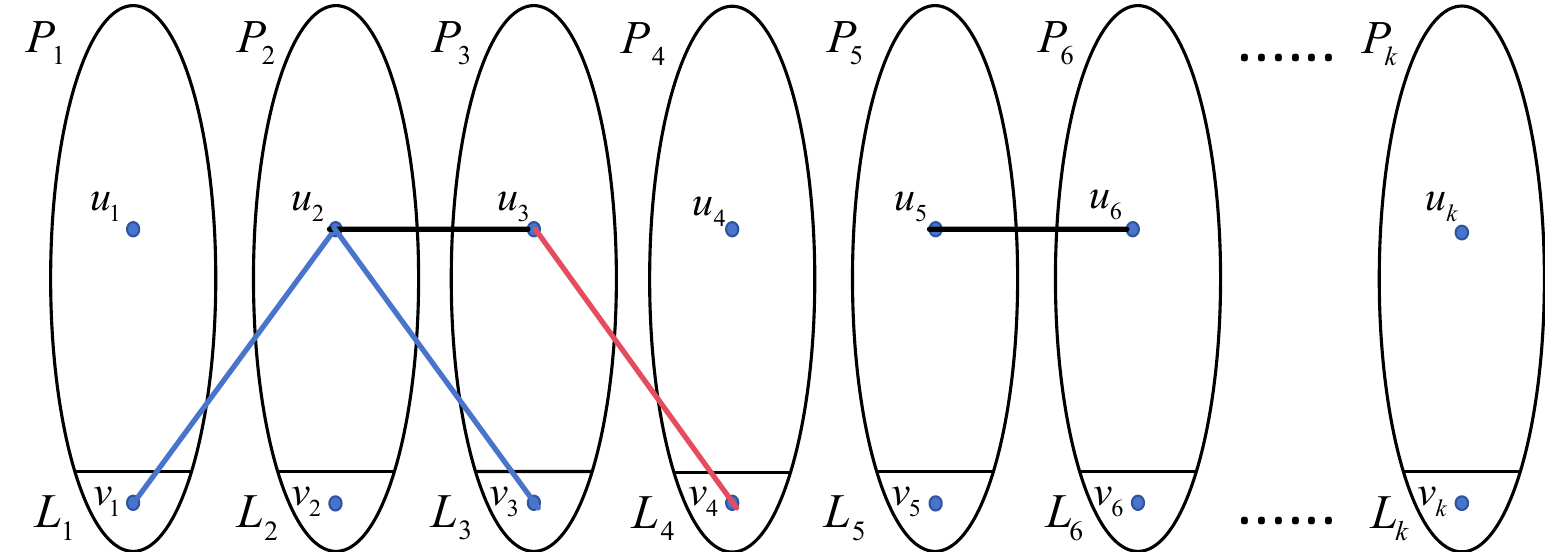}
    \caption{ $M'=u_2u_3\cup u_5u_6\cup\{u_1,u_4,u_7\ldots,u_k\}$}
    \label{fig0126}
\end{figure*}
Then we obtain a new $\{P^*_3, M_2\}$-tiling $\mathcal{H}'$ by replacing $M$ with $v_1 u_2 v_3 \cup \{u_4, v_5, v_6, u_7, \dots, u_k\}$ and $u_3 v_4 \cup u_5 u_6 \cup \{u_1, v_2, v_7, \dots, v_k\}$, where each $v_s \in L_s$. Clearly, $|V(\mathcal{H}')| = |V(\mathcal{H})| + k$ contradicts the maximality of $\mathcal{H}$.
\end{proof}

\begin{claim}\label{2p3}
There is at most one non-isolated vertex $u_i$ in $P^*$ or $M$ that satisfies $\overleftarrow{e}(u_i, L) > 0$ and $\overrightarrow{e}(u_i, L) > 0$.
\end{claim}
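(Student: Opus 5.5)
We argue by contradiction, treating $P^*$ and $M$ separately. In each case, assume two distinct non-isolated vertices $u_i, u_j$ both have neighbours in $L$ on both sides, i.e.\ $\overleftarrow{e}(u_i,L)>0$, $\overrightarrow{e}(u_i,L)>0$, and likewise for $u_j$. We then produce a $\{P_3^*,M_2\}$-tiling covering $k$ more vertices than $\mathcal{H}$. As in Claims~\ref{P^*}--\ref{M'}, the leftover sets $L_s$ are nonempty and all of the same size, so we may always fill isolated positions with fresh vertices $v_s\in L_s$.

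For $P^*=u_2u_3u_4\cup\{u_1,u_5,\dots,u_k\}$ the non-isolated vertices are $u_2,u_3,u_4$, so up to the symmetry $u_2\leftrightarrow u_4$ there are two possible pairs.
\begin{itemize}
\item The pair $\{u_2,u_3\}$ (and symmetrically $\{u_3,u_4\}$). If $u_3$ has neighbours on both sides, then Claim~\ref{P^*}(1) says $u_2$ cannot also have neighbours on both sides. Alternatively, Claim~\ref{P^*}(3) applied to $u_2$ gives $\overleftarrow{e}(u_3,L)=0$. Either way this is a contradiction.
\item The pair $\{u_2,u_4\}$. We give a direct replacement. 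Pick $v_1,v_3\in N(u_2)$ with $v_1\in L_1$, $v_3\in L_3$, and $v_3',v_5\in N(u_4)$ with $v_3'\in L_3$, $v_5\in L_5$. If $v_3\neq v_3'$, form $v_1u_2v_3\cup\{\ldots\}$ and $v_3'u_4v_5\cup\{\ldots\}$, and place $u_3$ in the first copy as its isolated representative of $V_3$? This fails, since position $3$ is already occupied by $v_3$. Instead, use $v_1u_2v_3$ together with $u_3,\dots$ taken from $L$, and $u_4v_5$ as an edge. Concretely, take $v_1u_2v_3\cup\{v_4',u_5,\dots\}$ and an $M_2$ or $P_3^*$ containing $u_4v_5$ with $u_3$ and $u_1$ as isolated vertices. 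Claim~\ref{P^*}(3) already covers this pair: its hypothesis on $u_2$ forces $\overrightarrow{e}(u_4,L)=0$. So this case too is immediate.
\end{itemize}
Hence for $P^*$ the claim follows directly from Claim~\ref{P^*}(1) and (3).

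For $M=u_2u_3\cup u_4u_5\cup\{u_1,u_6,\dots,u_k\}$, the pairs inside one edge are excluded by Claim~\ref{M}(2),(3) and their mirror images, obtained by reflecting the cycle indices. The pairs across the two edges are $(u_2\text{ or }u_3)$ with $(u_4\text{ or }u_5)$. If $u_2$ or $u_3$ has neighbours on both sides, then $\overleftarrow{e}(u_2,L)>0$ or $\overrightarrow{e}(u_3,L)>0$. Claim~\ref{M}(1) then gives $\overleftarrow{e}(u_4,L)=\overrightarrow{e}(u_5,L)=0$, so neither $u_4$ nor $u_5$ has neighbours on both sides. This is a contradiction.

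The main obstacle is checking that the cited parts of Claims~\ref{P^*} and~\ref{M} apply verbatim, in particular that their mirror versions (indices reversed) are valid. Where a mirror case is not literally stated, we redo the swap explicitly. For example, if $u_4,u_5$ are both two-sided, use $v_3u_4v_5$ as a new $P_3^*$ and rebuild a copy containing $u_2u_3$ with fresh $L$-vertices in the remaining positions. Each such swap increases $|V(\mathcal{H})|$ by $k$, contradicting maximality.
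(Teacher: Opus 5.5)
Your proof is correct, but it is organised differently from the paper's.

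\textbf{The paper's route.} It splits the pairs by index gap. For $i_2-i_1\neq 2$ it performs one uniform swap: the copy is replaced by two $P_3^*$'s whose paths $v\,u_{i_1}v'$ and $v''u_{i_2}v'''$ run through fresh $L$-neighbours. Only for the gap-$2$ pairs ($\{u_2,u_4\}$ in $P^*$; $\{u_2,u_4\}$ and $\{u_3,u_5\}$ in $M$) does it invoke Claims~\ref{P^*}(3) and~\ref{M}(1). These are the pairs where the two neighbours needed in $L_{i_1+1}$ might coincide.

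\textbf{Your route.} You send every pair through the earlier claims:
\begin{itemize}
\item Claim~\ref{P^*}(1),(3) for $P^*$;
\item Claim~\ref{M}(1) for all four pairs across the two edges of $M$;
\item Claim~\ref{M}(2),(3) and their mirror images for $\{u_2,u_3\}$ and $\{u_4,u_5\}$.
\end{itemize}
The mirror images are legitimate: $i\mapsto 7-i \pmod k$ is an automorphism of $C_k$ that maps $M$ to itself and swaps $\overleftarrow{e}$ with $\overrightarrow{e}$. All six pairs of $M$ and all three pairs of $P^*$ are covered.

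\textbf{What each buys.} Your route needs no new construction and avoids the slot bookkeeping of the direct swap. As displayed in the paper, that swap puts all remaining $u_s$ in the first copy and all remaining fresh $v_s$ in the second. This double-books $u_{i_2}$ when $i_2-i_1\ge 3$, and $L_{i_1-1}$ when $i_2-i_1=1$, so the leftover slots must be redistributed. The paper's route is independent of the shape of the copy and of any symmetry. That is why the same swap can be reused for $M'$ in Claim~\ref{more5M}.

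\textbf{Two passages to tidy.} Neither affects correctness, since each case ultimately rests on a cited claim.
\begin{itemize}
\item In the $\{u_2,u_4\}$ bullet, delete the abandoned construction. A copy whose only edge is $u_4v_5$, with $u_1,u_3$ isolated, is neither a $P_3^*$ nor an $M_2$. The valid version uses the path $u_3u_4v_5$, which is exactly the swap in Claim~\ref{P^*}(3).
\item In your closing example, the copy containing $u_2u_3$ needs a second edge, e.g.\ $v_4'u_5$ with $v_4'\in N(u_5)\cap L_4$. This gives the copies $v_3u_4v_5\cup\{v_1,v_2,v_6,\dots,v_k\}$ and $u_2u_3\cup v_4'u_5\cup\{u_1,u_6,\dots,u_k\}$.
\end{itemize}
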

\begin{proof}
Suppose that there are two non-isolated vertices $u_{i_1}$ and $u_{i_2}$ in $P^*$ or $M$ such that $\overleftarrow{e}(u_{i_j}, L) > 0$ and $\overrightarrow{e}(u_{i_j}, L) > 0$ for $j\in [2].$ Assume $i_1 < i_2$. 
We consider two cases separately.

\textbf{Case 1.} $i_2-i_1\neq 2$.
Take $v_{j_1}\in N(u_{i_1})\cap L_{i_1-1}$, $v_{j_2}\in N(u_{i_1})\cap L_{i_1+1}$ and $v_{l_1}\in N(u_{i_2})\cap L_{i_2-1}$, $v_{l_2}\in N(u_{i_2})\cap L_{i_2+1}$.
Then we obtain a new $\{P^*_3,M_2\}$-tiling $\mathcal{H}'$ by replacing $H$ with $v_{j_1}u_{l_1}v_{j_2}\cup(\{u_1,\cdots, u_k\}\setminus\{u_{l_1-1},u_{l_1},u_{l_1+1}\})$ and $v_{q_1}u_{l_2}v_{q_2}\cup(\{v_1,\cdots, v_k\}\setminus\{v_{l_2-1},v_{l_2},v_{l_2+1}\})$, where $v_s\in L_s$. It is clear that $|V(\mathcal{H}')|=|V(\mathcal{H})|+k$ contradicts the maximality of $\mathcal{H}$.

{\bf Case 2.} $i_2-i_1=2$.
It is easy to verify that this contradicts the Claims \ref{P^*}-\ref{M}.
\end{proof}

From Claim \ref{2p3}, the following Claim can be directly obtained.

\begin{claim}\label{more5}
$e(I_{P^*},L)\le 4|L|/k$.    
\end{claim}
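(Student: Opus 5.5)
We derive the bound by counting edges vertex by vertex, using Claim~\ref{2p3} to show that at most one vertex of $I_{P^*}$ can contribute its maximum possible amount. Recall that $I_{P^*}=\{u_2,u_3,u_4\}$ and that $u_i\in\mathcal{V}_i$. In the blow-up of $C_k$, the only neighbours of $u_i$ in $L$ lie in $L_{i-1}\cup L_{i+1}$. Since $|P_i|=|P_j|$ for all $i,j$ and every part of $R_d$ has exactly $N_0$ clusters, all the sets $L_i$ have the same size $|L|/k$. Hence
\[
\overleftarrow{e}(u_i,L)\le |L_{i-1}|=\frac{|L|}{k},\qquad \overrightarrow{e}(u_i,L)\le |L_{i+1}|=\frac{|L|}{k}.
\]
Consequently $e(u_i,L)\le 2|L|/k$ always. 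Moreover, $e(u_i,L)\le |L|/k$ whenever at least one of $\overleftarrow{e}(u_i,L)$ and $\overrightarrow{e}(u_i,L)$ vanishes.

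Next we apply Claim~\ref{2p3} to $P^*$. It says that at most one vertex $u_i\in I_{P^*}$ satisfies both $\overleftarrow{e}(u_i,L)>0$ and $\overrightarrow{e}(u_i,L)>0$. The pair $\{u_2,u_4\}$ has index difference $2$, so it falls under Case~2 of that claim. There it is excluded via Claim~\ref{P^*}: part $(3)$ handles the situations where $u_2$ or $u_4$ sees both sides, and part $(1)$ handles the situation where $u_3$ sees both sides.

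Therefore at most one vertex of $I_{P^*}$ contributes up to $2|L|/k$, and each of the other two vertices contributes at most $|L|/k$. Summing over the three vertices gives
\[
e(I_{P^*},L)=\sum_{i=2}^{4} e(u_i,L)\le \frac{2|L|}{k}+\frac{|L|}{k}+\frac{|L|}{k}=\frac{4|L|}{k}.
\]

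The only delicate point is the equal-size fact $|L_i|=|L|/k$ for every $i$, which follows from the balancedness of $V(\mathcal{H})$ noted before Claim~\ref{1117}. Everything else is a direct consequence of Claim~\ref{2p3}, so we do not expect a real obstacle here.
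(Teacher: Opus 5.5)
Your proposal is correct and follows the paper, which obtains this claim directly from Claim~\ref{2p3}: at most one of $u_2,u_3,u_4$ has neighbours in both $L_{i-1}$ and $L_{i+1}$, and since $|L_i|=|L|/k$ for every $i$, this gives $e(I_{P^*},L)\le 2|L|/k+|L|/k+|L|/k=4|L|/k$. One small correction to your framing: Claim~\ref{P^*} does more than exclude the Case~2 pair $\{u_2,u_4\}$, since part $(3)$ and its mirror image rule out any second two-sided vertex when $u_2$ or $u_4$ is two-sided, and part $(1)$ does the same when $u_3$ is, so it already yields Claim~\ref{2p3} for $P^*$ for every pair.
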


\begin{claim}\label{more5M}
$e(I_M,L)\le 4|L|/k$ and $e(I_{M'},L)\le 4|L|/k$. 
\end{claim}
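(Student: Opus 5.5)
The plan is to bound $e(I_M,L)$ and $e(I_{M'},L)$ by a case analysis on how many of the four non-isolated vertices have neighbours in $L$ on both sides. Each non-isolated vertex $u_i$ of $M$ or $M'$ has neighbours in $L$ only inside $L_{i-1}\cup L_{i+1}$, and by maximality $|L_{i-1}|=|L_{i+1}|=|L|/k$, so $e(u_i,L)\le 2|L|/k$. Moreover $e(u_i,L)>|L|/k$ forces $\overleftarrow{e}(u_i,L)>0$ and $\overrightarrow{e}(u_i,L)>0$.

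Call $u_i$ \emph{two-sided} if both directed counts are positive.

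\textbf{Step 1: at most one two-sided vertex.} Claim~\ref{2p3} gives that at most one vertex of $I_M$ is two-sided. For $M'$ I would reprove the analogue of Claim~\ref{2p3}. Case~1 there is a direct swap. In Case~2 ($i_2-i_1=2$) the relevant pairs in $M'=u_2u_3\cup u_5u_6$ are $(u_3,u_5)$ and $(u_2,\cdot)$; these are excluded via Claim~\ref{M'}(1)--(3), mirroring how Claims~\ref{P^*}--\ref{M} handle $M$. Hence in both $M$ and $M'$ at most one non-isolated vertex contributes more than $|L|/k$.

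\textbf{Step 2: case analysis for $M$.}
\begin{itemize}
\item \emph{No two-sided vertex.} Each $u_i$ has $e(u_i,L)\le |L|/k$. Here I use Claim~\ref{M}(1): if $\overleftarrow{e}(u_2,L)>0$ or $\overrightarrow{e}(u_3,L)>0$, the edge $u_4u_5$ only sends edges "inward", i.e.\ via $\overrightarrow{e}(u_4,L)$ and $\overleftarrow{e}(u_5,L)$. Both of these land in the class $L_5\cup L_4$ adjacent to the edge. The total is then at most $4|L|/k$. Otherwise $u_2,u_3$ contribute only through $\overrightarrow{e}(u_2,L)$ and $\overleftarrow{e}(u_3,L)$, and the same bound of $|L|/k$ each gives at most $4|L|/k$.
\item \emph{Exactly one two-sided vertex, say $u_3$.} Claim~\ref{M}(2) gives $e(u_2,L)=0$. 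Claim~\ref{M}(1), applied since $\overrightarrow{e}(u_3,L)>0$, kills $\overleftarrow{e}(u_4,L)$ and $\overrightarrow{e}(u_5,L)$. The sum is then $2|L|/k+|L|/k+|L|/k=4|L|/k$.
\item \emph{Two-sided vertex $u_2$.} Claim~\ref{M}(3) gives $\overleftarrow{e}(u_3,L)=0$, and Claim~\ref{M}(1) via $\overleftarrow{e}(u_2,L)>0$ gives the same two zeros on $u_4,u_5$. The total is again at most $4|L|/k$.
\item \emph{Two-sided vertex on the edge $u_4u_5$.} This case is symmetric under reversing the cyclic order, which swaps the roles of the two edges.
\end{itemize}

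\textbf{Step 3: the same analysis for $M'$.} For $M'$ I would repeat this using Claim~\ref{M'}. Parts (2) and (3) kill the whole partner vertex, which is stronger than needed. Part (1) supplies the vanishing conditions on the far edge $u_5u_6$.

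The main obstacle is the no-two-sided subcase for $M'$ when $\overrightarrow{e}(u_3,L)>0$ but $\overleftarrow{e}(u_2,L)=0$. There Claim~\ref{M'}(1) only gives $\overrightarrow{e}(u_6,L)=0$. I would first try the direct count: $u_2$ and $u_6$ contribute at most one side each, and $u_3$ and $u_5$ at most $|L|/k$ each, so the bound still closes at $4|L|/k$. If some configuration escapes this count, I would build an explicit exchange in the style of Claim~\ref{M'}(3) using the isolated vertex $u_4$ to create a new $P_3^*$. Combining these with Claim~\ref{more5} then yields Fact~\ref{0485}.
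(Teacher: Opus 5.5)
\textbf{Main gap: the case where $u_2$ is the two-sided vertex of $M$.} Your count in this case is wrong, and this is exactly where the real work lies. Claim~\ref{M}(3) and (1) give only $\overleftarrow{e}(u_3,L)=\overleftarrow{e}(u_4,L)=\overrightarrow{e}(u_5,L)=0$. The surviving terms are then $\overleftarrow{e}(u_2,L)$, $\overrightarrow{e}(u_2,L)$, $\overrightarrow{e}(u_3,L)$, $\overrightarrow{e}(u_4,L)$ and $\overleftarrow{e}(u_5,L)$. These are five terms, each at most $|L|/k$, so you only get $e(I_M,L)\le 5|L|/k$. The mirror case, with $u_5$ two-sided, has the same defect.

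No combination of Claims~\ref{M} and~\ref{2p3} closes this. You need a further exchange, which the paper supplies in Case~1.1 of its proof. The key observation is that $\overrightarrow{e}(u_3,L)$ and $\overleftarrow{e}(u_5,L)$ both count neighbours in the same class $L_4$. Suppose their sum exceeds $|L_4|=|L|/k$. Then you can pick
\begin{itemize}
\item $v_4\in N(u_3)\cap N(u_5)\cap L_4$,
\item $v_1\in N(u_2)\cap L_1$ and $v_3\in N(u_2)\cap L_3$,
\item arbitrary $v_s\in L_s$ for the remaining $s$.
\end{itemize}
Now $v_1u_2v_3\cup\{u_4,v_5,u_6,\dots,u_k\}$ and $u_3v_4u_5\cup\{u_1,v_2,v_6,\dots,v_k\}$ are two copies of $P_3^*$ that can replace $M$, contradicting maximality. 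Hence $\overrightarrow{e}(u_3,L)+\overleftarrow{e}(u_5,L)\le |L|/k$. Only with this inequality does the total drop to $4|L|/k$.

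\textbf{Two smaller points.} First, your Step~1 is false for $M'$. Claim~\ref{M'}(1)--(3), together with their mirror images under the reflection of the cycle fixing $u_4$, rule out every pair of two-sided vertices except $\{u_3,u_5\}$. None of these statements addresses that pair. The paper treats it separately in Case~2.2 of its proof, since Claim~\ref{2p3} is stated only for $P^*$ and $M$. The case is harmless: Claim~\ref{M'}(2) and its mirror image give $e(u_2,L)=e(u_6,L)=0$, so the total is at most $4|L|/k$. Still, it must be handled rather than claimed impossible.

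Second, the subcase you single out as the main obstacle (no two-sided vertex) is trivial. In that case each of the four non-isolated vertices contributes at most $|L|/k$. The genuine obstacle is the $u_2$ (equivalently $u_5$) case for $M$ described above.
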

\begin{proof}
Suppose that $e(I_M,L)> 4|L|/k$ and $e(I_{M'},L)> 4|L|/k$. Then there are at least three non-isolated vertices of $M$ are adjacent to $L$. 
By Claim \ref{2p3}, we have at most one non-isolated vertex $u_i$ of $M$ satisfying $\overleftarrow e(u_i,L)>0$ and $\overrightarrow e(u_i,L)>0$.
We distinguish between two cases.

\textbf{Case 1.} All four non-isolated vertices are adjacent to $L$. By symmetry, we only need to consider $i = 2$ or $i = 3$.

{\bf Case 1.1.} $\overleftarrow e(u_2,L)>0$ and $\overrightarrow e(u_2,L)>0$. 

By Claim~\ref{M'} (1) and (3), we have $e(u_3,L)=\overleftarrow e(u_5,L)=\overrightarrow e(u_6,L)=0$, which contradicts $e(I_{M'},L)> 4|L|/k$. 

By Claim~\ref{M} (1) and (3), we have $\overleftarrow{e}(u_3, L) = \overleftarrow{e}(u_4, L) = \overrightarrow{e}(u_5, L) = 0$.
Hence, $\overrightarrow{e}(u_3, L) > 0$, $\overrightarrow{e}(u_4, L) > 0$, and $\overleftarrow{e}(u_5, L) > 0$.
Since $e(I_{M}, L) > 4|L|/k$, we have $N(u_3) \cap N(u_5) \cap L_4 \neq \emptyset$. Pick $v_1 \in N(u_2) \cap L_1$, $v_3 \in N(u_2) \cap L_3$, $v_4 \in N(u_3) \cap N(u_5) \cap L_4$, and $v_5 \in N(u_4) \cap L_5$. Then we obtain a new $\{P^*_3, M_2\}$-tiling $\mathcal{H}'$ by replacing $M$ with $v_1 u_2 v_3 \cup \{u_1, u_4, v_5, u_6, \dots, u_k\}$ and $u_3 v_4 u_5 \cup \{v_2, v_6, \dots, v_k\}$, where $v_s \in L_s$. Clearly, $|V(\mathcal{H}')| = |V(\mathcal{H})| + k$ contradicts the maximality of $\mathcal{H}$.

{\bf Case 1.2.} $\overleftarrow{e}(u_3, L) > 0$ and $\overrightarrow{e}(u_3, L) > 0$. By Claim~\ref{M} (1) and (2), we have $e(u_2, L) = \overleftarrow{e}(u_4, L) = \overrightarrow{e}(u_5, L) = 0$, which contradicts $e(I_M, L) > 4|L|/k$. (The case for $M'$ is similar.)

{\bf Case 2.} There are three non-isolated vertices adjacent to $L$. Then there exist two non-isolated vertices $u_{i_1}$ and $u_{i_2}$ such that $\overleftarrow{e}(u_{i_j}, L) > 0$ and $\overrightarrow{e}(u_{i_j}, L) > 0$ for $j \in [2]$. By Claim~\ref{2p3}, we only need to consider the case where $u_{i_1}$ and $u_{i_2}$ are two non-isolated vertices of $M'$. Assume $i_1 < i_2$ and $e(u_{i_j}, L) > 0$ for $j \in [2]$. We consider two subcases.

{\bf Case 2.1.} $i_2-i_1\neq 2$.
Take $v_{j_1}\in N(u_{i_1})\cap L_{i_1-1}$, $v_{j_2}\in N(u_{i_1})\cap L_{i_1+1}$, $v_{l_1}\in N(u_{i_2})\cap L_{i_2-1}$ and $v_{l_2}\in N(u_{i_2})\cap L_{i_2+1}$. Then we obtain a new $\{P^*_3,M_2\}$-tiling $\mathcal{H}'$ by replacing $H$ with $v_{j_1}u_{i_1}v_{j_2}\cup(\{u_1,\cdots, u_k\}\setminus\{u_{i_1-1},u_{i_1},u_{i_1+1}\})$ and $v_{l_1}u_{i_2}v_{l_2}\cup(\{v_1,\cdots, v_k\}\setminus\{v_{i_2-1},v_{i_2},v_{i_2+1}\})$, where $v_s\in L_s$. Clearly, $|V(\mathcal{H}')| = |V(\mathcal{H})| + k$ contradicts the maximality of $\mathcal{H}$.

{\bf Case 2.2.} $i_2 - i_1 = 2$. Recall that $M' = u_2 u_3 \cup u_5 u_6 \cup \{u_1, u_4, u_7, \ldots, u_k\}$. Since $u_{i_j}$ for $j \in [2]$ are non-isolated vertices of $M'$, we have $i_1 = 3$ and $i_2 = 5$. Choose $v_2 \in N(u_3) \cap L_2$, $v_4 \in N(u_3) \cap L_4$, $v'_4 \in N(u_5) \cap L_4$, and $v_6 \in N(u_5) \cap L_6$; see Figure~\ref{fig13}.
Moreover, at least one of $e(u_2, L) > 0$ and $e(u_6, L) > 0$ holds. 
By symmetry, without loss of generality, assume that $e(u_2, L) > 0$. 
Take $v_j \in N(u_2) \cap L$.
We can obtain a new $\{P^*_3, M_2\}$-tiling $\mathcal{H}''$ by replacing $M'$ with $v_2 u_3 v_4 \cup \{u_1, v_5, v_6, u_7, \dots, u_k\}$ and $v_j u_2 \cup u_5 u_6 \cup \bigl( \{v_1, v_3, u_4, v_7, \dots, v_k\} \setminus \{j\} \bigr)$, where $v_s \in L_s$. Clearly, $|V(\mathcal{H}'')| = |V(\mathcal{H})| + k$ contradicts the maximality of $\mathcal{H}$.
\begin{figure*}[htbp]
    \centering
    \includegraphics[trim=4cm 5.7cm 3.7cm 4cm,clip, scale=0.5]{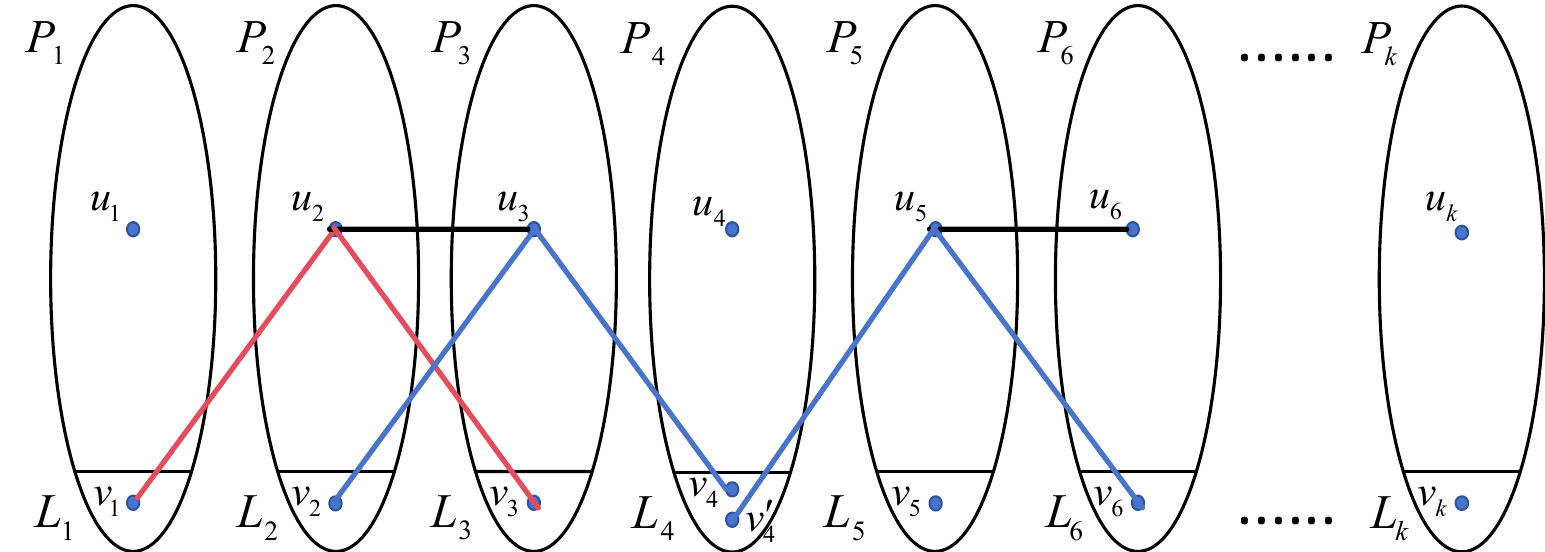}
    \caption{$M'=u_2u_3\cup u_5u_6\cup\{u_1,u_4,u_7\ldots,u_k\}$}
    \label{fig13}
\end{figure*}
\end{proof}

\begin{claim}\label{4edges}
If $e(I_{P^*},L)> 3|L|/k$, then $e(J_{P^*},L)\le 4|L|/k-e(I_{P^*},L)$.    
\end{claim}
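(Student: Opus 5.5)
The plan is to argue by contradiction with the maximality of $\mathcal{H}$, following the exchange pattern of Claims~\ref{P^*}--\ref{2p3}. Throughout, each $u_i$ has neighbours in $L$ only inside $L_{i-1}\cup L_{i+1}$, so $\overleftarrow{e}(u_i,L),\overrightarrow{e}(u_i,L)\le |L|/k$.

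\textbf{Step 1: pin down the edges from $I_{P^*}$.} The three vertices $u_2,u_3,u_4$ have six directions in total. Since $e(I_{P^*},L)>3|L|/k$, at least four of these directions are nonzero, so some $u_i\in I_{P^*}$ has both $\overleftarrow{e}(u_i,L)>0$ and $\overrightarrow{e}(u_i,L)>0$. By Claim~\ref{2p3} this vertex is unique.

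\begin{itemize}
\item Suppose it is $u_3$. By Claim~\ref{P^*}(2), at most one of the four directions of $u_2,u_4$ is nonzero. Then $e(I_{P^*},L)\le 2|L|/k+|L|/k=3|L|/k$, a contradiction.
\item So it is $u_2$, or $u_4$ by symmetry; say $u_2$. By Claim~\ref{P^*}(3), $\overleftarrow{e}(u_3,L)=\overrightarrow{e}(u_4,L)=0$. Since $u_4$ is not double-sided, the surplus forces $\overrightarrow{e}(u_3,L)>0$ and $\overleftarrow{e}(u_4,L)>0$. Moreover,
\[
\overrightarrow{e}(u_3,L)+\overleftarrow{e}(u_4,L)>3|L|/k-e(u_2,L)\ge |L|/k .
\]
\end{itemize}

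\textbf{Step 2: exchange for an isolated vertex.} Suppose an isolated vertex $u_j\in J_{P^*}$ has a neighbour $w\in L_{j\pm1}$. I would replace $P^*$ by two new copies of $H$.
\begin{itemize}
\item The first copy is $v_1u_2v_3\cup\{\cdots\}$, a $P^*_3$, with $v_1\in N(u_2)\cap L_1$ and $v_3\in N(u_2)\cap L_3$.
\item The second copy is $u_3u_4\cup u_jw$, an $M_2$, completed transversally.
\item The filling rule is as follows. Original vertices $u_s$ are used wherever they are still free. In the part containing $w$, the vertex $u_{j\pm1}$ goes into the first copy instead of an $L$-vertex. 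All remaining parts are filled with fresh vertices $v_s\in L_s$.
\end{itemize}
Both copies are transversal, so the new tiling covers $k$ more vertices, contradicting maximality. This gives $e(u_j,L)=0$ for every isolated $u_j$ whose relevant neighbourhoods avoid the parts $1,3$ (already used by the first copy) and $2,3,4,5$ (used by the path). The same exchange works with $u_2u_3\cup u_jw$ or $u_4\cup\cdots$ variants when convenient.

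\textbf{Step 3: the boundary cases (main obstacle).} This is the delicate part. The isolated vertices $u_1$ and $u_5$, and also $u_k$ and $u_6$ when $w$ lands in $L_1,L_2,L_4,L_5$, can clash with $v_1,v_3$ or with the parts of the path. Here a direct swap may reuse a part. For these I would instead prove a counting statement. For each offending part, the neighbourhoods of the relevant $J$-vertex and of the relevant $I$-vertex inside that $L_s$ must be disjoint, since otherwise a common neighbour yields the exchange above, as in the proof of Claim~\ref{P^*}(2). Disjointness gives, for example,
\[
\overleftarrow{e}(u_2,L)+e(u_1\to L_2\text{ side})\le |L|/k ,
\]
and similarly for the pairs $(u_4,u_5)$ and $(u_3,u_5)$ in $L_4$. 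Summing these inequalities over the at most four boundary parts gives $e(I_{P^*},L)+e(J_{P^*},L)\le 4|L|/k$. I expect the careful case check of which $L_s$ each $J$-edge can enter, ensuring no double counting and that the budget is exactly $4|L|/k$, to be the main work. I would organise that check by the position of $w$ relative to the path $u_2u_3u_4$, and defer the routine configurations to figures as in Claims~\ref{M} and~\ref{M'}.
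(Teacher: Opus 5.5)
Your Step~1 is correct and matches the opening of the paper's proof. A double-sided vertex of $I_{P^*}$ must exist. The case $u_3$ is excluded by Claim~\ref{P^*}(2), and in the case $u_2$, Claim~\ref{P^*}(3) forces $\overrightarrow{e}(u_3,L)>0$ and $\overleftarrow{e}(u_4,L)>0$.

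The gap is Step~3. This is exactly where the hypothesis $e(I_{P^*},L)+e(J_{P^*},L)>4|L|/k$ has to be used, and you leave it as an anticipated case check. The inequalities you do write there cannot be obtained as you describe. The pairs $(u_2,u_1)$ and $(u_4,u_5)$ consist of vertices in consecutive parts. Their $L$-neighbourhoods therefore lie in disjoint unions of parts (for instance $L_1\cup L_3$ versus $L_k\cup L_2$), and $u_4$ has no neighbours in $L_4$ at all. So no common-neighbour argument can produce a bound for these pairs. Moreover, a common neighbour in the offending part does not give ``the exchange above''. In the residual configuration your Step~2 swap is precisely what breaks: for $u_5$ with $w\in L_4$, the second copy $u_3u_4\cup u_5w$ contains two vertices $u_4,w$ of $V_4$. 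A different swap is needed there.

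The missing argument is short. Use the swap $u_2v_3u_4\cup(\ldots)$ together with $u_iv_j\cup u_3v_4\cup(\ldots)$, where $v_3\in N(u_2)\cap N(u_4)\cap L_3$ and $v_4\in N(u_3)\cap L_4$. The vertex $v_3$ exists because $\overrightarrow{e}(u_2,L)+\overleftarrow{e}(u_4,L)>|L|/k$, by the same computation as your display. This swap kills every isolated vertex except $u_5$ having neighbours only in $L_4$. Your own swap $v_1u_2v_3$, $u_3u_4\cup u_jw$ has a second failure, $u_k\to L_1$, when $w$ is forced to coincide with $v_1$; you would have to treat that separately.

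In the residual case $e(J_{P^*},L)=\overleftarrow{e}(u_5,L)$. Since $\overleftarrow{e}(u_2,L),\overrightarrow{e}(u_2,L),\overleftarrow{e}(u_4,L)\le |L|/k$, the assumption gives $\overrightarrow{e}(u_3,L)+\overleftarrow{e}(u_5,L)>|L|/k$. Hence there is a vertex $v_4\in N(u_3)\cap N(u_5)\cap L_4$. Now replace $P^*$ by the two copies of $P^*_3$ given by $u_2v_3u_4\cup\{u_1,v_5,u_6,\dots,u_k\}$ and $u_3v_4u_5\cup\{v_1,v_2,v_6,\dots,v_k\}$, contradicting maximality.

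In budget terms, the count closes as $|L|/k$ from $L_1$, plus $2|L|/k$ from $L_3$, plus $|L|/k$ shared by $u_3$ and $u_5$ in $L_4$. The last term is exactly this double-$P^*_3$ exchange, which your proposal does not contain.
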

\begin{proof}
Suppose that $e(J_{P^*}, L) > 4|L|/k - e(I_{P^*}, L)$. 
By Claim~\ref{2p3}, there is at most one non-isolated vertex $u_i$ of $P^*$ satisfying $\overleftarrow{e}(u_i, L) > 0$ and $\overrightarrow{e}(u_i, L) > 0$. 
Recall that $P^* = u_2 u_3 u_4 \cup \{u_1, u_5, \dots, u_k\}$. 
Since $e(I_{P^*}, L) > 3|L|/k$, by symmetry, we only need to consider the case where $e(u_i, L) > 0$ for each $i \in \{2,3\}$.

{\bf Case 1.} $\overleftarrow e(u_2,L)>0$ and $\overrightarrow e(u_2,L)>0$. By Claim~\ref{P^*} (3), we have $\overleftarrow{e}(u_3, L) = \overrightarrow{e}(u_4, L) = 0$. Therefore, $\overrightarrow{e}(u_3, L) > 0$ and $\overleftarrow{e}(u_4, L) > 0$. Since $e(I_{P^*}, L) > 3|L|/k$, we have $N(u_2) \cap N(u_4) \cap L_3 \neq \emptyset$. Take $v_1 \in N(u_2) \cap L_1$, $v_3 \in N(u_2) \cap N(u_4) \cap L_3$ and $v_4 \in N(u_3) \cap L_4$; see Figure~\ref{fig14}.

Suppose that there exists an isolated vertex $u_i$ of $P^*$ such that $e(u_i, L) > 0$. Choose $v_j \in N(u_i) \cap L$. We can obtain a new $\{P^*_3, M_2\}$-tiling $\mathcal{H}'$ by replacing $P^*$ with 
$u_2 v_3 u_4 \cup \bigl( \{v_i, u_1, u_5, \dots, u_k\} \setminus \{u_i\} \bigr)$ and $u_i v_j \cup u_3 v_4 \cup \bigl( \{v_1, \dots, v_k\} \setminus \{v_i, v_j, v_3, v_4\} \bigr),$
where $v_s \in L_s$. Note that this would fail only when $i=5$ and $j=4$ and we may further assume that $u_5$ is the only isolated vertex of $P^*$ satisfying $e(u_5, L) > 0$. In this case, since $e(I_{P^*}, L) + e(J_{P^*}, L) > 4|L|/k$, we can choose $v_4 \in N(u_3) \cap N(u_5) \cap L_4$. Clearly, we can enlarge $\mathcal{H}$ by replacing $P^*$ with two disjoint copies of $P_3^*$, a contradiction.

\begin{figure*}[htbp]
    \centering
    \includegraphics[trim=4cm 5.7cm 4cm 4.2cm,clip, scale=0.5]{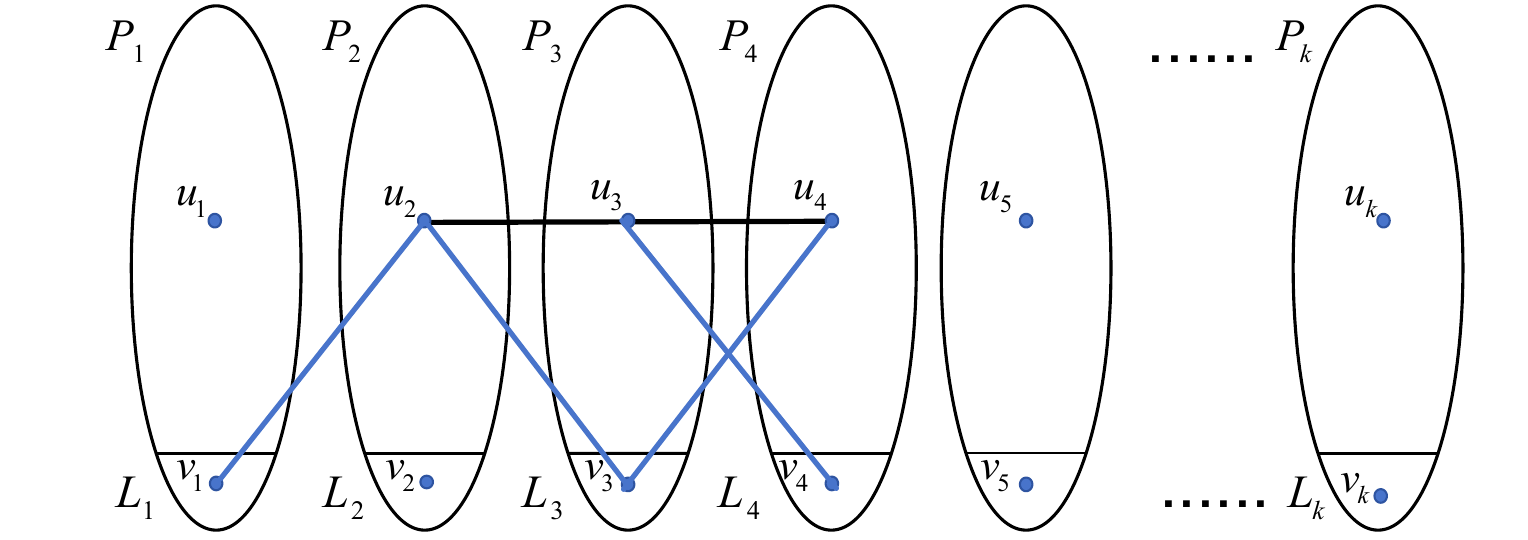}
    \caption{ $P^*=u_2u_3u_4\cup\{u_1,u_5,\cdots, u_k\}$}
    \label{fig14}
\end{figure*}

{\bf Case 2.} $\overleftarrow e(u_3,L)>0$ and $\overrightarrow e(u_3,L)>0$. By Claim~\ref{P^*} (2), we have at most one of $\overleftarrow{e}(u_2, L) > 0$, $\overrightarrow{e}(u_2, L) > 0$, $\overleftarrow{e}(u_4, L) > 0$ and $\overrightarrow{e}(u_4, L) > 0$ holds. This contradicts $e(I_{P^*}, L) > 3|L|/k$.
\end{proof}

\begin{claim}\label{4edgesM}
    If $e(I_M,L)>3|L|/k$ and $e(I_{M'},L)>3|L|/k$, then $e(J_M,L)\le 4|L|/k-e(I_M,L)$ and $e(J_{M'},L)\le 4|L|/k-e(I_{M'},L)$.  
\end{claim}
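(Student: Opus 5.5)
We argue by contradiction, mirroring the proof of Claim~\ref{4edges}: assume $e(I_M,L)>3|L|/k$ but $e(J_M,L)>4|L|/k-e(I_M,L)$ (and similarly for $M'$). In the reduced graph every vertex of $L_s$ sends edges only to $V_{s-1}$ and $V_{s+1}$, so each $u_i$ contributes at most $2|L|/k$ to $e(u_i,L)$. Since $e(I_M,L)>3|L|/k$, at least three of the four non-isolated vertices $u_2,u_3,u_4,u_5$ of $M$ (resp.\ $u_2,u_3,u_5,u_6$ of $M'$) send edges to $L$. Moreover, some of these directed quantities must equal the full value $|L|/k$, so that common-neighbourhood arguments in a single $L_s$ become available.

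By Claim~\ref{2p3}, at most one non-isolated vertex $u_i$ has both $\overleftarrow e(u_i,L)>0$ and $\overrightarrow e(u_i,L)>0$. We first invoke Claim~\ref{more5M}, whose proof shows that the configuration with all four non-isolated vertices adjacent to $L$ together with a two-sided vertex is essentially impossible. Combining this with Claim~\ref{M} (1)--(3) (resp.\ Claim~\ref{M'}), we reduce to a short list of residual configurations, up to the reflection symmetry $u_i\leftrightarrow u_{7-i}$ for $M$.
\begin{enumerate}
\item Configuration (a): the two-sided vertex is $u_2$. Claim~\ref{M}(3) then forces $\overleftarrow e(u_3,L)=0$, and Claim~\ref{M}(1) kills $\overleftarrow e(u_4,L)$ and $\overrightarrow e(u_5,L)$.
\item Configuration (b): there is no two-sided vertex, and three or four vertices are each one-sidedly adjacent to $L$.
\end{enumerate}
In each case the excess $e(J_M,L)+e(I_M,L)>4|L|/k$ lets us pick an isolated vertex $u_i\in J_M$ with a neighbour $v_j\in L_{i\pm1}$. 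Where needed, we also pick a common neighbour in some $L_s$ of two vertices of $I_M$, exactly as in Case~1 of Claim~\ref{4edges}.

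Having chosen these vertices, we replace $M$ by two disjoint members of $\{P_3^*,M_2\}$, one using $u_i v_j$ as one of its edges. Concretely, in configuration (a), pick $v_1\in N(u_2)\cap L_1$ and $v_3\in N(u_2)\cap L_3$, and form $v_1u_2v_3$ padded by $u$'s. The other copy is $u_iv_j\cup u_4u_5$ padded by $v$'s, or $u_iv_j\cup u_3v_4$ when $\overrightarrow e(u_3,L)>0$. This increases $|V(\mathcal H)|$ by $k$, contradicting maximality.

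The main obstacle is collisions: the index $j$ (or $i$) may coincide with a part already used by the edge or path in the new copies. This mirrors the exceptional case $i=5$, $j=4$ in Claim~\ref{4edges}. We handle it as there. If the only isolated vertex adjacent to $L$ is $u_1$ or $u_6$ (for $M$; $u_1,u_4,u_7$ for $M'$), and its neighbours lie in the part $L_s$ adjacent to the matching edge, then $e(I_M,L)+e(J_M,L)>4|L|/k$ forces a common neighbour in $L_s$ of that isolated vertex and an endpoint of a matching edge. That common neighbour gives a $P_3$ with middle vertex in $L_s$, so $M$ splits into two copies of $P_3^*$, or into a $P_3^*$ and an $M_2$.

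For $M'$, the isolated vertex $u_4$ lies between the two edges and needs separate checking. If $u_4$ has a neighbour in $L_3$ or $L_5$, together with $u_3$ or $u_5$ it forms a path $u_3v\,u_4$-type structure, handled by the same common-neighbour trick. Verifying that every residual subcase admits such a swap is routine but tedious casework, and it is where I expect the bulk of the effort.
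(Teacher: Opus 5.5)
\textbf{Assessment.} There is a genuine gap. Your overall plan matches the paper's:
\begin{itemize}
\item argue by contradiction;
\item use Claims~\ref{2p3}, \ref{M}, \ref{M'} to force certain directed degrees to vanish;
\item replace the copy by two copies, one of them using an edge $u_iv_j$ at an isolated vertex;
\item resolve part-collisions with a pigeonhole common neighbour coming from $e(I_M,L)+e(J_M,L)>4|L|/k$.
\end{itemize}
However, this claim consists of nothing but that case analysis, and the list of residual configurations you reduce to is not exhaustive. Calling the rest ``routine'' therefore leaves cases unproved.

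\textbf{The missing case: $u_3$ two-sided.} Most importantly, you omit the case where the two-sided vertex is $u_3$ (the paper's Case~2.2). The reflection $u_i\leftrightarrow u_{7-i}$ sends $u_3$ to $u_4$, not to $u_2$ or $u_5$, so this is neither your configuration (a) nor (b).
\begin{itemize}
\item Claim~\ref{M}(1),(2) give $e(u_2,L)=\overleftarrow e(u_4,L)=\overrightarrow e(u_5,L)=0$. Since $e(I_M,L)>3|L|/k$, this forces $\overrightarrow e(u_4,L)>0$ and $\overleftarrow e(u_5,L)>0$.
\item The only $L$-edges at $u_4$ and $u_5$ then both run between parts $4$ and $5$, and $u_2$ has none. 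So the copy not containing $u_3$ can get at most one edge, and no swap exists without an isolated-vertex edge.
\item The paper uses $v_2u_3v_4$ (padded) together with $u_iv_j\cup u_4v_5$. It handles the collision $(i,j)=(6,5)$ via a common neighbour of $u_4,u_6$ in $L_5$.
\item The analogous case for $M'$ with $u_3$ two-sided (where Claim~\ref{M'}(1),(2) give $e(u_2,L)=\overrightarrow e(u_6,L)=0$) needs its own exceptional subcases, $i=4$ and $i=7$.
\end{itemize}

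\textbf{Other problems with the reduction.}
\begin{itemize}
\item \emph{The vertex count is wrong.} The bound $e(u_i,L)\le 2|L|/k$ only forces \emph{two} non-isolated vertices to meet $L$, not three. Moreover, Claim~\ref{2p3} is proved only for $P^*$ and $M$, not for $M'$. In $M'$, the vertices $u_3$ and $u_5$ may both be two-sided with $e(u_2,L)=e(u_6,L)=0$; the paper treats this separately (its Case~3).
\item \emph{Claim~\ref{more5M} cannot be reused.} You invoke its proof to dismiss ``all four non-isolated vertices adjacent plus a two-sided vertex''. But its common-neighbour steps (e.g.\ $N(u_3)\cap N(u_5)\cap L_4\neq\emptyset$) rely on $e(I_M,L)>4|L|/k$. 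Under the present hypothesis $>3|L|/k$, the configuration ``$u_2$ two-sided with $\overrightarrow e(u_3,L),\overrightarrow e(u_4,L),\overleftarrow e(u_5,L)>0$'' is live. It must be killed by an isolated-vertex swap; your $u_iv_j\cup u_3v_4$ recipe can do this, so fold it into (a) rather than calling it impossible.
\item \emph{Configuration (b) is misstated.} Three one-sided vertices contribute at most $3|L|/k$, so all four must meet $L$.
\item \emph{Fullness is not forced.} The hypothesis $e(I_M,L)>3|L|/k$ does not force any directed count to equal $|L|/k$. Common neighbours come only from sums exceeding $|L_s|$, as in your collision argument.
\end{itemize}
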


\begin{proof}
Suppose that $e(J_M, L) > 4|L|/k - e(I_M, L)$ and $e(J_{M'}, L) > 4|L|/k - e(I_{M'}, L)$. 
By Claim~\ref{2p3}, we have at most one non-isolated vertex $u_i$ of $M$ satisfies $\overleftarrow{e}(u_i, L) > 0$ and $\overrightarrow{e}(u_i, L) > 0$. 
Since $e(I_M, L) > 3|L|/k$ and $e(I_{M'}, L) > 3|L|/k$, we only need to consider three possible cases.

{\bf Case 1.} $e(u_i, L) > 0$ for every non-isolated vertex $u_i$ of $M$ or $M'$.

{\bf Case 1.1.} $\overleftarrow{e}(u_2, L)>0$. Pick $v_1\in N(u_2)\cap L_1$.

By Claim~\ref{M}~(1), we can obtain that $\overleftarrow{e}(u_4,L)=\overrightarrow{e}(u_5,L)=0$ for $M$. Hence, $\overrightarrow{e}(u_4,L)>0,\overleftarrow{e}(u_5,L)>0$. Choose $v_4\in N(u_5)\cap L_4$ and $v_5\in N(u_4)\cap L_5$.
We can obtain a new $\{P^*_3,M_2\}$-tiling $\mathcal{H}'$ by replacing $M$ with $v_1u_2\cup\ u_4v_5\cup\ \{v_3,u_6,\cdots, u_k\}$ and $v_ju_3\cup v_4u_5\cup(\{u_1,v_2,v_6,\cdots, v_k\}\setminus\{u_j\})$, where $v_s\in L_s$. Note that if $\overrightarrow{e}(u_3,L)>0$, then we can obtain $v_4\in N(u_3)\cap N(u_i)\cap L_4$ since $e(I_M,L)> 3|L|/k$. Now, $v_ju_3\cup v_4u_5\cup(\{u_1,v_2,v_6,\cdots, v_k\}\setminus\{u_j\})$ is a copy of $P^*_3$. This contradicts the maximality of $\mathcal{H}$.

By Claim \ref{M'}~(1), we can obtain that $\overleftarrow{e}(u_5,L)=\overrightarrow{e}(u_6,L)=0$ for $M'$. 
Hence, $\overrightarrow{e}(u_5,L)>0,\overleftarrow{e}(u_6,L)>0$. Choose $v_5\in N(u_6)\cap L_5$ and $v_6\in N(u_5)\cap L_6$.
Since $e(u_3,L)>0$, we can take $v_j\in N(u_3)\cap L$. 
We can obtain a new $\{P^*_3,M_2\}$-tiling $\mathcal{H}'$ by replacing $M'$ with $v_1u_2\cup\ u_5v_6\cup\ \{v_3,u_4,u_7,\cdots, u_k\}$ and $v_ju_3\cup v_5u_6\cup(\{u_1,v_2,v_4,v_7,\cdots, v_k\}\setminus\{u_j\})$, where $v_s\in L_s$. This contradicts the maximality of $\mathcal{H}$.
\begin{figure*}[htbp]
    \centering
    \includegraphics[trim=4cm 5.7cm 3.7cm 4cm,clip, scale=0.5]{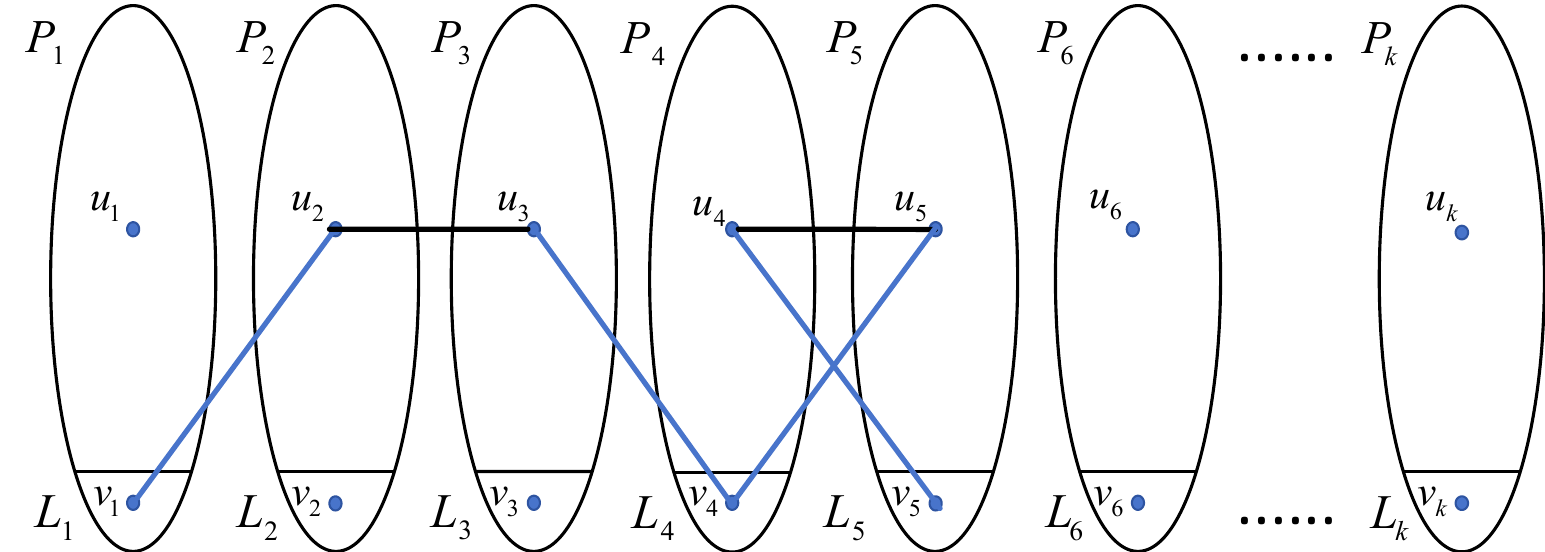}
    \caption{ $M=u_2u_3\cup u_4u_5\cup\{u_1,u_6,u_7\ldots,u_k\}$}
    \label{fig16}
\end{figure*}

{\bf Case 1.2.} $\overrightarrow{e}(u_2, L)>0$. 

If $\overrightarrow{e}(u_3, L) > 0$, then by Claim~\ref{M}~(1), we have $\overleftarrow{e}(u_4, L) = \overrightarrow{e}(u_5, L) = 0$ for $M$. 
Thus, we have $\overrightarrow{e}(u_4, L) > 0$ and $\overleftarrow{e}(u_5, L) > 0$. 
Since $e(I_M, L) > 3|L|/k$, we obtain $N(u_3) \cap N(u_5) \cap L_4 \neq \emptyset$.
Pick $v_3 \in N(u_2) \cap L_3$, $v_4 \in N(u_3) \cap N(u_5) \cap L_4$, and $v_5 \in N(u_4) \cap L_5$; see Figure~\ref{fig17}.
Then we can obtain a new $\{P^*_3,M_2\}$-tiling $\mathcal{H}''$  by replacing $M$ with $u_3v_4u_5\cup\ \{u_1,v_2,u_6,\cdots, u_k\}$ and $u_2v_3\cup u_4v_5\cup\{v_1,v_6,\cdots, v_k\}$, where $v_s\in L_s$. This contradicts the maximality of $\mathcal{H}$.

By Claim~\ref{M'}~(1), we have $\overrightarrow{e}(u_6, L) = 0$ for $M'$.
Hence, we have $\overleftarrow{e}(u_6, L) > 0$. Pick $v_3 \in N(u_2) \cap L_3$ and $v_5 \in N(u_4) \cap L_5$. If $\overleftarrow{e}(u_5, L) > 0$, then we have $N(u_3) \cap N(u_5) \cap L_4 \neq \emptyset$ since $e(I_M, L) > 3|L|/k$. Take $v_4 \in N(u_3) \cap N(u_5) \cap L_4$. If $\overrightarrow{e}(u_5, L) > 0$, pick $v_6 \in N(u_5) \cap L_6$.
Then we can obtain a new $\{P^*_3,M_2\}$-tiling $\mathcal{H}''$  either by replacing $M'$ with $u_2v_3\cup u_5v_6\cup\{u_1,u_4,u_7,\cdots, u_k\}$ and $u_3v_4\cup v_5u_6\cup\{v_1,v_2,v_7,\cdots, v_k\}$ or  by replacing $M'$ with $u_2v_3\cup u_5u_6\cup\{u_1,u_4,u_7,\cdots, u_k\}$ and $u_3v_4u_5\cup\{v_1,v_2,v_7,\cdots, v_k\}$, where $v_s\in L_s$. This contradicts the maximality of $\mathcal{H}$.
\begin{figure*}[htbp]
    \centering
    \includegraphics[trim=4cm 5.7cm 3.7cm 4cm,clip, scale=0.5]{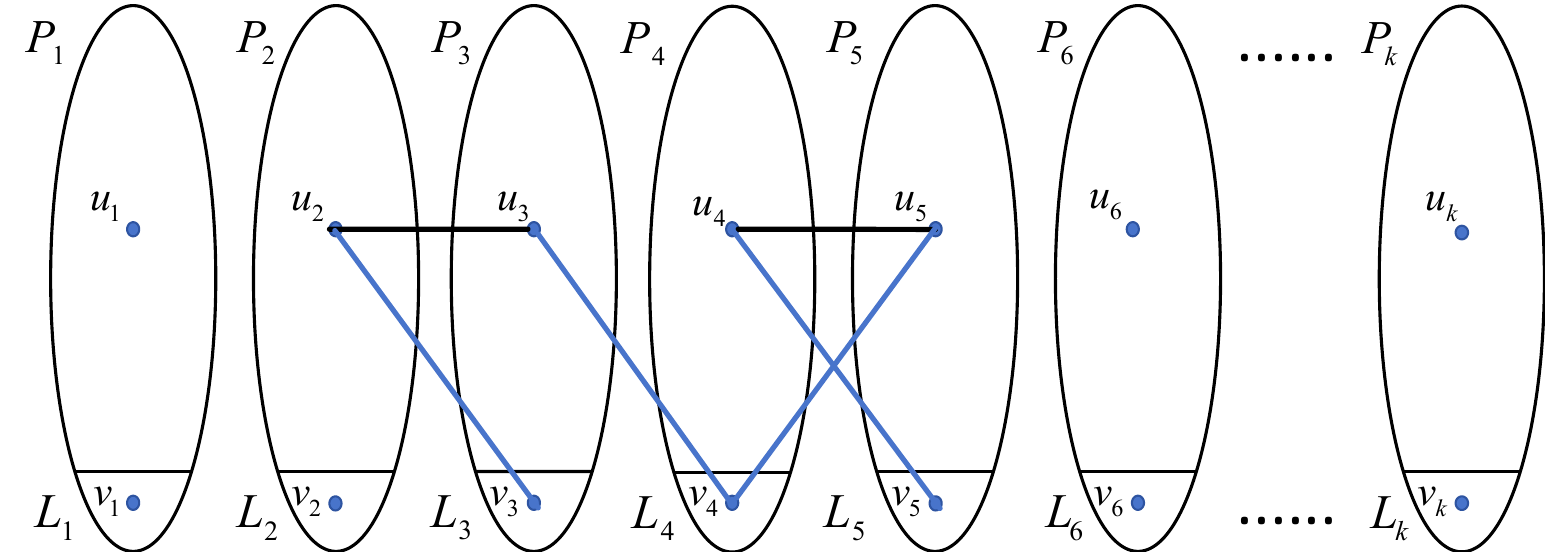}
    \caption{ $M=u_2u_3\cup u_4u_5\cup\{u_1,u_6,u_7\ldots,u_k\}$}
    \label{fig17}
\end{figure*}
  
If $\overleftarrow{e}(u_3, L)>0$, recall that $\overrightarrow{e}(u_2, L)>0$. By the symmetry of $M$, we only need to consider $\overrightarrow{e}(u_4,L)>0$ and $\overleftarrow{e}(u_5,L)>0$. Take $v_2\in N(u_3)\cap L_2$, $v_3\in N(u_2)\cap L_3$, $v_4\in N(u_5)\cap L_4$, $v_5\in N(u_4)\cap L_5$; see Figure~\ref{fig18}.
We can obtain a new $\{P^*_3,M_2\}$-tiling $\mathcal{H}'''$ by replacing $M$ with $u_2v_3\cup\ u_4v_5\cup\ \{u_1,u_6,\cdots, u_k\}$ and $v_2u_3\cup v_4u_5\cup\{v_1,v_6,\cdots, v_k\}$, where $v_s\in L_s$. Clearly, this contradicts the maximality of $\mathcal{H}$.

Similarly, by the symmetry of $M'$, we only need to consider $\overrightarrow{e}(u_5,L)>0$ and $\overleftarrow{e}(u_6,L)>0$. Take $v_2\in N(u_3)\cap L_2$, $v_3\in N(u_2)\cap L_3$, $v_5\in N(u_6)\cap L_5$ and $v_6\in N(u_5)\cap L_6$.
We can obtain a new $\{P^*_3,M_2\}$-tiling $\mathcal{H}'''$  by replacing $M'$ with $u_2v_3\cup\ u_5v_6\cup\ \{u_1,u_4,u_7,\cdots, u_k\}$ and $v_2u_3\cup v_5u_6\cup\{v_1,v_4,v_7,\cdots, v_k\}$, where $v_s\in L_s$. Clearly, this contradicts the maximality of $\mathcal{H}$.
\begin{figure*}[htbp]
    \centering
    \includegraphics[trim=4cm 5.7cm 3.7cm 4cm,clip, scale=0.5]{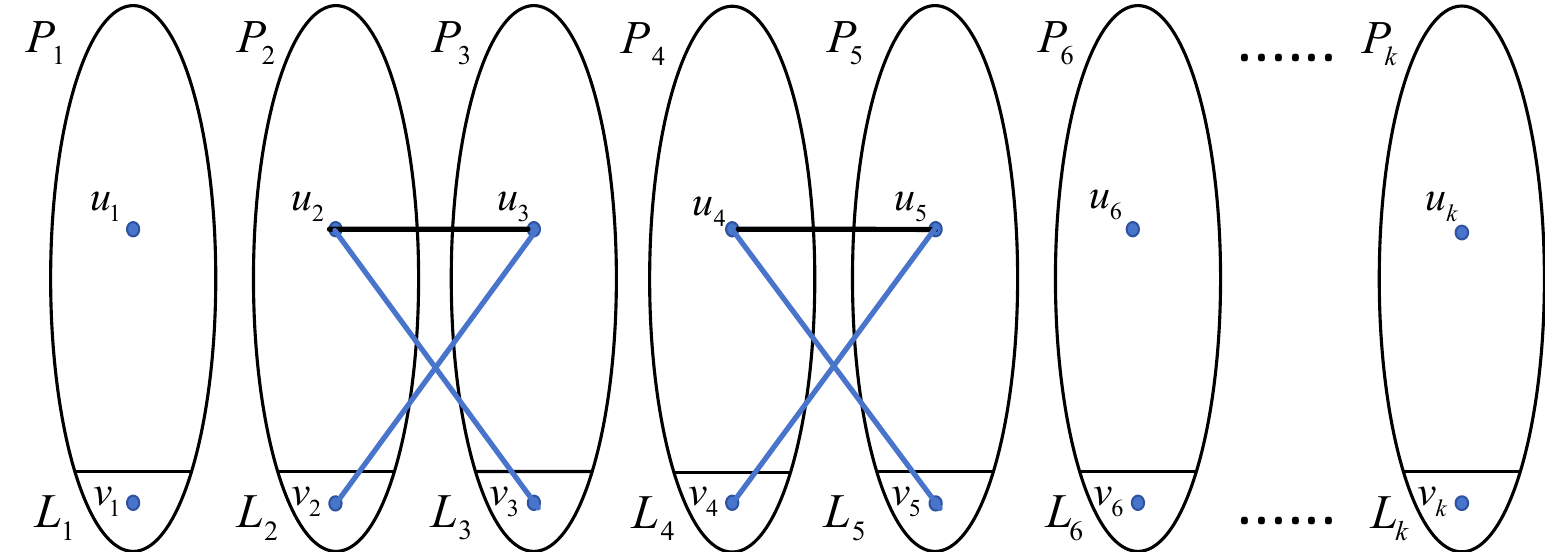}
    \caption{ $M=u_2u_3\cup u_4u_5\cup\{u_1,u_6,u_7\ldots,u_k\}$}
    \label{fig18}
\end{figure*}

{\bf Case 2.} There exist exactly three non-isolated vertices $u_{i_1}, u_{i_2}, u_{i_3}$ of $M$ or $M'$ such that $e(u_{i_j}, L) > 0$ for every $j \in [3]$. Since $e(I_M, L) > 3|L|/k$, there exists a non-isolated vertex $u_{i_l}$ with $\overleftarrow{e}(u_{i_l}, L) > 0$ and $\overrightarrow{e}(u_{i_l}, L) > 0$ for some $l \in [3]$. 
By the symmetry of $M$ and $M'$, we only need to consider $i_l = 2$ or $i_l = 3$.

{\bf Case 2.1.} $\overleftarrow{e}(u_2,L)>0$ and $\overrightarrow{e}(u_2,L)>0$. Choose $v_1\in N(u_2)\cap L_1$, $v_3\in N(u_2)\cap L_3$. 

By Claim~\ref{M'} (1) and (3), we have $\overleftarrow{e}(u_5, L) = \overrightarrow{e}(u_6, L) = e(u_3, L) =0$ for $M'$.
Thus, we only need to consider $\overrightarrow{e}(u_5, L)>0$ and $\overleftarrow{e}(u_6, L)>0$ for $M'$.
Take $v_5\in N(u_6)\cap L_5$, $v_6\in N(u_5)\cap L_6$.
Suppose there exists an isolated vertex $u_i$ such that $e(u_i,L)>0$.  
Choose $v_j\in N(u_i)\cap L$.
If $i=4$ and $j=5$, we may further assume that $u_4$ is the only isolated vertex of $M'$ satisfying $e(u_4, L) > 0$. 
Since $e(I_{M'},L)+e(J_{M'},L)> 4|L|/k$, we can choose $v_5 \in N(u_4) \cap N(u_6) \cap L_5$.
Then we can obtain a new $\{P^*_3,M_2\}$-tiling $\mathcal{H}'$  by replacing $M'$ with $v_1u_2u_3\cup\{v_4,u_5,v_6,u_7\cdots, u_k\}$ and $u_4v_5u_6\cup\{u_1,v_2,v_3,v_7,\cdots, v_k\}$, where $v_s\in L_s$, a contradiction.
If $i=7$ and $j=6$, we may further assume that $u_7$ is the only isolated vertex of $M'$ satisfying $e(u_7, L) > 0$. 
Since $e(I_{M'},L)+e(J_{M'},L)> 4|L|/k$, we can choose $v_6 \in N(u_5) \cap N(u_7) \cap L_6$.
Then, similarly, we can enlarge $\mathcal{H}$ by replacing $M'$ with two disjoint copies of $P^*_3$, a contradiction.
For other cases,
we can obtain a new $\{P^*_3,M_2\}$-tiling $\mathcal{H}'$ by replacing $M'$ with $v_1u_2u_3\cup(\{v_i,v_4,u_5,v_6,u_7\cdots, u_k\}\setminus\{u_i\})$ and $u_iv_j\cup v_5u_6\cup(\{u_1,v_2,v_3,u_4,v_7,\cdots, v_k\}\setminus\{v_i,v_j\})$, where $v_s\in L_s$. 
Note that this would fail only when $i=k$ and $j=2$, then we may further assume that $u_k$ is the only isolated vertex of $M'$ satisfying $e(u_k, L) > 0$.
In this case, since $e(I_{M'},L)+e(J_{M'},L)> 4|L|/k$, we can choose $v_1 \in N(u_k) \cap N(u_2) \cap L_1$. Clearly, we can enlarge $\mathcal{H}$ by replacing $M'$ with two disjoint copies of $M_2$, a contradiction.

By Claim~\ref{M} (1) and (3), we have $\overleftarrow{e}(u_4, L) = \overrightarrow{e}(u_5, L) = \overleftarrow{e}(u_3, L) =0$ for $M$.
We only need to consider three cases for $M$.

{\bf Case 2.1.1.} $\overrightarrow{e}(u_3, L)>0$ and $ \overrightarrow{e}(u_4, L)>0$ for $M$.
Take $v_4\in N(u_3)\cap L_4$ and $v_5\in N(u_4)\cap L_5$.
Suppose there exists an isolated vertex $u_i$ such that $e(u_i,L)>0$. 
Choose $v_j\in N(u_i)\cap L$ and we can obtain a new $\{P^*_3,M_2\}$-tiling $\mathcal{H}'$ by replacing $M$ with $v_1u_2v_3\cup\{v_i,u_4,u_5,\cdots, u_k\}\setminus\{u_i\}$ and $u_iv_j\cup u_3v_4\cup(\{v_1,\cdots, v_k\}\setminus\{v_i,v_j,v_3,v_4\})$, where $v_s\in L_s$. Clearly, $|V(\mathcal{H}')|=|V(\mathcal{H})|+k$ contradicts the maximality of $\mathcal{H}$.

{\bf Case 2.1.2.}  $\overrightarrow{e}(u_3, L)>0$ and $\overleftarrow{e}(u_5, L)>0$ for $M$. Since $e(I_M,L)>3|L|/k$, we have  $N(u_3)\cap N(u_5)\cap L_4\neq\emptyset$.
Take $v_4\in N(u_3)\cap N(u_5)\cap L_4$.  Then we can obtain a new $\{P^*_3,M_2\}$-tiling $\mathcal{H}''$ by replacing $M$ with $v_1u_2v_3\cup\{u_4,v_5,u_6,\cdots, u_k\}$ and $u_3v_4u_5\cup\{u_1,v_2,v_6,\cdots, v_k\}$, where $v_s\in L_s$.  Clearly, $|V(\mathcal{H}'')|=|V(\mathcal{H})|+k$ contradicts the maximality of $\mathcal{H}$.

{\bf Case 2.1.3.} $\overrightarrow{e}(u_4, L)>0$ and $\overleftarrow{e}(u_5, L)>0$ for $M$.
Take $v_4\in N(u_5)\cap L_4$, $v_5\in N(u_4)\cap L_5$.
Suppose there exists an isolated vertex $u_i$ such that $e(u_i,L)>0$.  
Choose $v_j\in N(u_i)\cap L$ and we can obtain a new $\{P^*_3,M_2\}$-tiling $\mathcal{H}'''$ by replacing $M$ with $v_1u_2v_3\cup(\{v_i,v_4,u_5,\cdots, u_k\}\setminus\{u_i\})$ and $u_iv_j\cup u_4v_5\cup(\{u_1,v_2,u_3,v_6,\cdots, v_k\}\setminus\{v_i,v_j\})$, where $v_s\in L_s$. This contradicts the maximality of $\mathcal{H}$.
Note that this would fail only when $i=6$ and $j=5$, then we may further assume that $u_6$ is the only isolated vertex of $M$ satisfying $e(u_6, L) > 0$.
In this case, since $e(I_{M},L)+e(J_{M},L)> 4|L|/k$, we can choose $v_5\in N(u_6)\cap N(u_6)\cap L_5$. Clearly, we can enlarge $\mathcal{H}$ by replacing $M$ with two disjoint copies of $P_3^*$, a contradiction.

{\bf Case 2.2.} $\overleftarrow{e}(u_3,L)>0$ and $\overrightarrow{e}(u_3,L)>0$. 
Choose $v_2\in N(u_3)\cap L_2$ and $v_4\in N(u_3)\cap L_4$.

By Claim~\ref{M} (1) and (2), we have $\overleftarrow{e}(u_4, L) = \overrightarrow{e}(u_5, L) = e(u_2, L) =0$ for $M$. Then we have $\overrightarrow{e}(u_4, L)>0$ and $\overleftarrow{e}(u_5, L)>0$.
Pick $v_4\in N(u_5)\cap L_4$ and $v_5\in N(u_4)\cap L_5$.
Suppose there exists an isolated vertex $u_i$ such that $e(u_i,L)>0$.  
Choose $v_j\in N(u_i)\cap L$.
Then we can obtain a new $\{P^*_3,M_2\}$-tiling $\mathcal{H}'$ by replacing $M$ with $v_2u_3v_4\cup(\{v_i,u_1,u_5,\cdots, u_k\}\setminus\{u_i\})$ and $u_iv_j\cup u_4v_5\cup(\{v_1,u_2,v_3,v_6,\cdots, v_k\}\setminus\{v_i,v_j\})$, where $v_s\in L_s$. This contradicts the maximality of $\mathcal{H}$.
Note that this would fail only when $i=6$ and $j=5$, then we may further assume that $u_6$ is the only isolated vertex of $M$ satisfying $e(u_6, L) > 0$.
In this case, since $e(I_{M},L)+e(J_{M},L)> 4|L|/k$, we can choose $v_5\in N(u_4)\cap N(u_6)\cap L_5$. Clearly, we can enlarge $\mathcal{H}$ by replacing $M$ with two disjoint copies of $P_3^*$, a contradiction.

By Claim~\ref{M'} (1) and (2), we have $\overrightarrow{e}(u_6, L) = e(u_2, L) =0$ for $M'$. 
Then we have $\overleftarrow{e}(u_6, L)>0$.
Pick $v_5\in N(u_6)\cap L_5$.
Suppose there exists an isolated vertex $u_i$ such that $e(u_i,L)>0$.  
Choose $v_j\in N(u_i)\cap L$.
Since $e(I_{M'},L)>3|L|/k$, we have $e(u_5,L)>0$.
If $i\neq 7$,
then we can obtain a new $\{P^*_3,M_2\}$-tiling $\mathcal{H}'$ by replacing $M'$ with $u_2u_3v_4\cup(\{v_i,u_1,v_5,u_6,\cdots, u_k\}\setminus\{u_i\})$ and $u_iv_j\cup u_5v_6\cup(\{v_1,v_2,v_3,u_4,v_7,\cdots, v_k\}\setminus\{v_i,v_j\})$, where $v_s\in L_s$. Clearly, $|V(\mathcal{H}')|=|V(\mathcal{H})|+k$ contradicts the maximality of $\mathcal{H}$.
Note that this would fail only when $i=4$ and $j=5$, then we may further assume that $u_4$ is the only isolated vertex of $M'$ satisfying $e(u_4, L) > 0$.
In this case, since $e(I_{M'},L)+e(J_{M'},L)> 4|L|/k$, we can choose $v_5\in N(u_4)\cap N(u_6)\cap L_5$. Clearly, we can enlarge $\mathcal{H}$ by replacing $M'$ with two disjoint copies of $P_3^*$, a contradiction.
If $i=7$, take $v_l\in N(u_5)\cap L$.
Then we can obtain a new $\{P^*_3,M_2\}$-tiling $\mathcal{H}'$  by replacing $M'$ with $u_1u_2\cup u_6v_5\cup\{u_3,u_4,v_7,u_8,\cdots, u_k\}$ and $u_7v_j\cup u_5v_l\cup(\{v_1,\cdots, v_k\}\setminus\{v_7,v_j,v_5,v_l\})$, where $v_s\in L_s$. Those contradict the maximality of $\mathcal{H}$.
Note that this would fail only when  $j=6$ and $l=6$, then we may further assume that $u_7$ is the only isolated vertex of $M'$ satisfying $e(u_7, L) > 0$.
In this case, since $e(I_{M'},L)+e(J_{M'},L)> 4|L|/k$, we can choose $v_6\in N(u_7)\cap N(u_5)\cap L_6$. Clearly, we can enlarge $\mathcal{H}$ by replacing $M'$ with two disjoint copies of $P_3^*$, a contradiction.

{\bf Case 3.} There are two non-isolated vertices are adjacent to $L$. 
By Claim \ref{2p3}, we only need to consider the case for $M'$.
This case coincides with the proof of Case 2 of Claim~\ref{more5M}, so we omit the details here.
\end{proof}

\begin{claim}\label{iso}
Let $u_l\in V$ be an isolated vertex of any copy of $P^*_3$ or $M_2$ in $\mathcal{H}$. Then at most one of $\overrightarrow e(u_l,L)>0$ or $\overleftarrow e(u_l,L)>0$ holds.
\end{claim}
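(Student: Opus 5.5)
Suppose, for a contradiction, that some isolated vertex $u_l$ of a copy $H^\ast\in\mathcal{H}$ (either a copy of $P^*_3$ or of $M_2$) satisfies both $\overleftarrow e(u_l,L)>0$ and $\overrightarrow e(u_l,L)>0$. The plan is to replace $H^\ast$ by two disjoint copies of $P^*_3$ or $M_2$. Since $\mathcal{H}$ is maximal, this contradiction proves the claim. The argument is a single exchange, in the same spirit as Claim~\ref{P^*}~(1) and (3); the point is that $u_l$ is isolated in $H^\ast$, so it can be moved out of $H^\ast$ at no cost.

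\textbf{Step 1 (pick vertices).} Choose $v_{l-1}\in N(u_l)\cap L_{l-1}$ and $v_{l+1}\in N(u_l)\cap L_{l+1}$, with indices taken cyclically so that $L_0=L_k$ and $L_{k+1}=L_1$. Since $k\ge 4$, the indices $l-1$, $l$, $l+1$ are distinct. We are inside the proof of Claim~\ref{1117}, where we assumed $|L_i|>0$. Because every copy in $\mathcal{H}$ meets each part exactly once, all $|L_s|$ are equal and hence all positive. So we may fix one vertex $v_s\in L_s$ for every remaining index $s\in[k]\setminus\{l-1,l+1\}$.

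\textbf{Step 2 (two new copies).} First form $v_{l-1}u_lv_{l+1}\cup\{v_s: s\notin\{l-1,l,l+1\}\}$. This is a transversal copy of $P^*_3$: its only required edges are $u_lv_{l-1}$ and $u_lv_{l+1}$, which exist by the choice of these vertices. Next form $H^{\ast\ast}:=(H^\ast\setminus\{u_l\})\cup\{v_l\}$. It is again a transversal copy of the same type as $H^\ast$, because $u_l$ was isolated in $H^\ast$ and so no edge of $H^\ast$ involves it. The two new copies are vertex-disjoint: the first uses $u_l$ and one vertex of $L_s$ for each $s\ne l$, while the second uses $H^\ast-u_l$ together with the vertex $v_l\in L_l$. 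Both are also disjoint from all other members of $\mathcal{H}$, since they use only vertices of $V(H^\ast)\cup L$.

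\textbf{Step 3 (conclude).} Replacing $H^\ast$ by these two copies gives a $\{P^*_3,M_2\}$-tiling $\mathcal{H}'$ with $|V(\mathcal{H}')|=|V(\mathcal{H})|+k$, contradicting the maximality of $\mathcal{H}$.

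The only step needing care is the bookkeeping in Step 1: each $L_s$ must supply exactly one vertex, and the indices $l\pm1$ must be distinct. The latter is guaranteed by $k\ge 4$. Unlike Claims~\ref{4edges} and \ref{4edgesM}, no common-neighbourhood argument is needed, so there are no exceptional index configurations to handle.
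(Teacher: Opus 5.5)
Your proof is correct and is the same argument the paper has in mind when it calls the claim obvious "as otherwise we can get an extra copy of $P^*_3$". You make the one-line remark explicit: swap the isolated vertex $u_l$ out of its copy for some $v_l\in L_l$, then build a new transversal $P^*_3$ centred at $u_l$ from its neighbours in $L_{l-1}$ and $L_{l+1}$ together with one vertex from each remaining $L_s$. This covers $k$ more vertices and contradicts the maximality of $\mathcal{H}$.
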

This is obvious as otherwise we can get an extra copy of $P^*_3$.

\begin{claim}\label{iso2}
Let $u_{l_1}$ and $u_{l_2}$ be two isolated vertices from a copy of $P^*_3$ or $M_2$ in $\mathcal{H}$, which are adjacent to $L$. Define $i_j \equiv l_j \pmod{k}$ for $j \in [2]$, and assume $i_1 < i_2$. Then $i_2 - i_1 \le 2$. Moreover, if $e(u_{l_1}, L) + e(u_{l_2}, L) > |L|/k$, then $i_2 - i_1 = 1$.
\end{claim}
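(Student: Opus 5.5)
The plan is to argue by contradiction with the maximality of $\mathcal{H}$, in the same way as Claims~\ref{P^*}--\ref{iso}. The main tool is a replacement: keep the copy $H'\in\mathcal{H}$ containing $u_{l_1},u_{l_2}$ (with its non-isolated structure intact), and build a second copy of $P^*_3$ or $M_2$ that uses $u_{l_1}$, $u_{l_2}$ together with vertices of $L$. The vacated slots $l_1,l_2$ of $H'$ are then refilled by leftover vertices $v_{l_1}\in L_{l_1}$, $v_{l_2}\in L_{l_2}$; this is allowed because isolated vertices of $H'$ impose no adjacency constraints. Each such switch gives a tiling covering $k$ more vertices, which is a contradiction. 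By Claim~\ref{iso}, each $u_{l_j}$ sends edges into exactly one of $L_{l_j-1}$ or $L_{l_j+1}$. Write $w_j\in L_{l_j\pm1}$ for a chosen neighbour of $u_{l_j}$ in $L$.

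For the first assertion, suppose $i_2-i_1\ge 3$ (indices cyclic; I would also treat the wrap-around distance symmetrically). Then the four indices $l_1$, $l_1\pm1$, $l_2$, $l_2\pm1$ are pairwise distinct, since the neighbouring parts $L_{l_1\pm1}$ and $L_{l_2\pm1}$ cannot coincide with each other or with $l_1,l_2$. So $u_{l_1}w_1\cup u_{l_2}w_2$, together with one vertex of $L_s$ for every remaining index $s$, is a transversal copy of $M_2$ disjoint from $H'$ after the refill. Concretely, replace $H'$ by $H'-\{u_{l_1},u_{l_2}\}+\{v_{l_1},v_{l_2}\}$, which has the same edge structure, together with this new $M_2$. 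Here $L_s\neq\emptyset$ for all $s$ because the $|L_s|$ are all equal and positive. This enlarges $\mathcal{H}$, a contradiction. The main care needed is checking that the refill vertices $v_{l_j}$ are distinct from the $w_j$. This holds since $w_j$ lies in a part with index $l_j\pm1\notin\{l_1,l_2\}$ once $i_2-i_1\ge3$.

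For the moreover part, assume $i_2-i_1=2$ and $e(u_{l_1},L)+e(u_{l_2},L)>|L|/k$. The only obstruction in the previous argument arises when both vertices point at the middle part $L_{i_1+1}$. If either points outward, the four indices are again distinct and the $M_2$ switch works. In the remaining case, both neighbourhoods lie in $L_{i_1+1}$, which has size $|L|/k$, and the degree sum exceeds $|L|/k$. By pigeonhole there is a common neighbour $w\in L_{i_1+1}$. Then $u_{l_1}wu_{l_2}$ plus one vertex from each other $L_s$ is a transversal $P^*_3$, and refilling $H'$ as before again enlarges the tiling.

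The step I expect to be delicate is the bookkeeping for small $k$ and for cyclic wrap-around. For example, when $k=4$ a distance-$3$ pair is really at distance $1$, and the outward neighbours could collide. So I would first reduce to $i_2-i_1\le k/2$ by symmetry, and then check that the index sets in each switch are disjoint.
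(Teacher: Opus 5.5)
Your proposal is correct and follows essentially the paper's argument. When the two chosen $L$-edges occupy four distinct parts, you use an $M_2$-switch: move $u_{l_1},u_{l_2}$ into a new copy and refill their isolated slots from $L_{l_1},L_{l_2}$. At cyclic distance $2$ with both vertices pointing into the middle part, you use pigeonhole to find a common neighbour there, which yields a new transversal $P^*_3$.

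Your explicit refill and your reading of $i_2-i_1$ as cyclic distance are more careful than the paper, which writes $\mathcal{H}\cup\{M^*\}$ without the refill. The paper also ignores wrap-around pairs such as $u_1,u_k$, where the switching argument cannot give the literal non-cyclic conclusion.
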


\begin{proof} 
Suppose that $i_2-i_1> 2$. Choose $v_{j_1}\in N(u_{l_1})\cap L$, $v_{j_2}\in N(u_{l_2})\cap L$. Then $M^{*}=v_{j_1}u_{l_1}\cup v_{j_2}u_{l_2}\cup(\{v_1,\ldots,v_k\}\setminus \{v_{j_1},v_{j_2},v_{l_1},v_{l_2}\})$ is a copy of $M_2$, where $v_i\in L_i$.
Hence, $\mathcal{H}\cup \{M^{*}\}$ is a $\{P_3^*,M_2\}$-tiling, contradicts the maximality of $\mathcal{H}$.

Now assume that $e(u_{l_1},L)+e(u_{l_2},L)>|L|/k$.
Suppose that $i_2-i_1= 2$.
By Claim \ref{iso}, we have at most one of $\overrightarrow e(u_l,L)>0$ or $\overleftarrow e(u_l,L)>0$ holds for any $l\in\{l_1,l_2\}$.
If $\overrightarrow{e}(u_{l_1},L)>0 $ and $\overleftarrow{e}(u_{l_2},L)>0$,  we have $\overrightarrow{e}(u_{l_1},L)+\overleftarrow{e}(u_{l_2},L)>|L|/k$.
Thus,
$ N(u_{l_1})\cap L\cap N(u_{l_2})\neq\emptyset$. 
Take $v_j\in N(u_{l_1})\cap L\cap N(u_{l_2})$. Then we obtain a copy $P=u_{l_1}v_ju_{l_2}\cup(\{v_1,\ldots, v_k\}\setminus \{v_{l_1},v_{j}, v_{l_2}\})$ of $P^*_3$, where $v_i\in L_i$. 
Hence, $\mathcal{H}\cup \{P\}$ is a $\{P_3^*,M_2\}$-tiling, contradicts the maximality of $\mathcal{H}$.
If at most one of $\overrightarrow{e}(u_{l_1},L)>0 $ and $\overleftarrow{e}(u_{l_2},L)>0$ holds, take $v_{j_1}\in N(u_{l_1}\cap L)$ and $v_{j_2}\in N(u_{l_2})\cap L$. Then $M^{**}=v_{j_1}u_{l_1}\cup v_{j_2}u_{l_2}\cup(\{v_1,\ldots,v_k\}\setminus \{v_{j_1},v_{j_2},v_{l_1},v_{l_2}\})$ is a copy of $M_2$, where $v_i\in L_i$.
Hence, $\mathcal{H}\cup \{M^{**}\}$ is a $\{P_3^*,M_2\}$-tiling, contradicts the maximality of $\mathcal{H}$.
\end{proof}

\begin{claim}\label{2edges}
    $e(J_N,L)\le 2|L|/k$ for every $N\in\{P^*,M,M'\}$. Moreover, there are at most $2$ isolated vertices $u_{l_1},u_{l_2}$ of $N$ with $e(u_{l_j},L)>0$ for any $j\in [2]$.
\end{claim}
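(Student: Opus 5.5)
We work in the reduced graph $R_d$ and $\mathcal{H}$ is a maximal $\{P^*_3,M_2\}$-tiling. The plan is to combine the two structural facts already established for isolated vertices. Claim~\ref{iso} says each isolated vertex $u_l$ of $N$ sends edges into $L$ on at most one side, so $e(u_l,L)\le |L|/k$. Claim~\ref{iso2} says that any two isolated vertices adjacent to $L$ have indices differing by at most $2$ (cyclically modulo $k$), and by exactly $1$ if their combined degree into $L$ exceeds $|L|/k$.

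\textbf{Step 1: at most two isolated vertices touch $L$.} Suppose three isolated vertices $u_{l_1},u_{l_2},u_{l_3}$ of $N$ (indices reduced modulo $k$, in increasing order) all have positive degree into $L$. Claim~\ref{iso2} applied to each pair forces all pairwise differences to be at most $2$, so the indices are three consecutive values $i,i+1,i+2$.

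\begin{itemize}
\item[$\bullet$] Since $u_i$ and $u_{i+2}$ differ by exactly $2$, the moreover part of Claim~\ref{iso2} gives $e(u_i,L)+e(u_{i+2},L)\le |L|/k$.
\item[$\bullet$] I would then rule out this configuration directly. Take $v_a\in N(u_i)\cap L$ and $v_b\in N(u_{i+2})\cap L$, which lie in $L_{i\pm1}$ and $L_{i+2\pm1}$ respectively.
\item[$\bullet$] If these neighbours lie in parts different from $i,i+1,i+2$, or can be chosen so the two resulting edges use four distinct parts, we obtain a new copy of $M_2$ (two disjoint transversal edges plus filler vertices from the $L_s$) disjoint from $\mathcal{H}$. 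This mirrors the first part of the proof of Claim~\ref{iso2}.
\item[$\bullet$] The only obstruction is when both neighbours lie in $L_{i+1}$. That is exactly the case $\overrightarrow{e}(u_i,L)>0$, $\overleftarrow{e}(u_{i+2},L)>0$. Then I would use $u_{i+1}$ instead: $u_{i+1}$ has a neighbour in $L_i$ or $L_{i+2}$, and pairing it with $u_i$ or $u_{i+2}$ yields two edges on four distinct parts, giving a new $M_2$.
\end{itemize}

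The key observation is that $u_{i+1}$'s neighbour lies in $L_i$ or $L_{i+2}$, not $L_{i+1}$, so it cannot collide with the neighbour of whichever endpoint we pair it with. Either way, the new copy can be appended to $\mathcal{H}$, contradicting its maximality. Hence at most two isolated vertices $u_{l_1},u_{l_2}$ have $e(u_{l_j},L)>0$, which is the moreover part.

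\textbf{Step 2: the degree bound.} With at most two contributing isolated vertices, Claim~\ref{iso} gives
\[
e(J_N,L)=e(u_{l_1},L)+e(u_{l_2},L)\le 2|L|/k .
\]

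The main obstacle is the bookkeeping in Step 1. One must check that the filler vertices $v_s\in L_s$ for the remaining parts can always be chosen, and that the pair used really occupies four distinct parts. The former holds because we are arguing by contradiction from $L_s\ne\emptyset$ for every $s$ (recall $|L_i|$ is independent of $i$). The latter is where the cyclic-index case analysis, including wrap-around modulo $k$ with $k\ge4$, needs care.
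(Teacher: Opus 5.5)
\textbf{Gap in Step 1.} Your argument fails in the obstruction case, which by your own case split (and the proof of Claim~\ref{iso2}) is the only case that can occur when $u_i,u_{i+2}$ are at distance $2$. There $\overrightarrow{e}(u_i,L)>0$ and $\overleftarrow{e}(u_{i+2},L)>0$, so $v_a,v_b\in L_{i+1}$.

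The problem is that $u_{i+1}$ itself lies in $V_{i+1}$. Each of the three edges $u_iv_a$, $u_{i+2}v_b$ and $u_{i+1}w$ has exactly one vertex in $V_{i+1}$, whether $w\in L_i$ or $w\in L_{i+2}$. Hence any two of them together put two vertices in part $i+1$, and they never form a transversal $M_2$. Your ``key observation'' checks where $w$ lies, but the collision comes from $u_{i+1}$.

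Switching to a transversal $P_3^*$ does not help either. Since every available edge meets $V_{i+1}$ exactly once, the middle vertex would have to lie in $V_{i+1}$ with neighbours on both sides.
\begin{itemize}
\item Claim~\ref{iso} forbids this for $u_{i+1}$.
\item For $v_a$ and $v_b$ nothing forces it. Since $e(u_i,L)+e(u_{i+2},L)\le |L|/k$, the vertices $u_i$ and $u_{i+2}$ need not have a common neighbour in $L_{i+1}$, and $L$ may be independent.
\end{itemize}
So the configuration $u_i\to L_{i+1}$, $u_{i+2}\to L_{i+1}$ with no common neighbour, and $u_{i+1}\to L_i$, is not excluded by any local swap. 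Your ``at most two'' count is therefore unproved, and Step 2, which rests on it, is unsupported as written.

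\textbf{Repairing the degree bound.} The bound $e(J_N,L)\le 2|L|/k$ is easy to recover by counting instead of trying to eliminate the middle vertex, and this is exactly the paper's route. The paper assumes $e(J_N,L)>2|L|/k$, obtains three isolated vertices adjacent to $L$ from Claim~\ref{iso}, and finishes with Claim~\ref{iso2}. In your notation:
\begin{itemize}
\item the three vertices are consecutive;
\item your first bullet gives $e(u_i,L)+e(u_{i+2},L)\le |L|/k$;
\item Claim~\ref{iso} gives $e(u_{i+1},L)\le |L|/k$;
\item a fourth contributing vertex is impossible, because its index cannot be within distance $2$ of all three.
\end{itemize}
Summing gives $e(J_N,L)\le 2|L|/k$ even with three contributing vertices.

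Be aware that this counting argument, the paper's included, only controls $e(J_N,L)$. It does not establish the literal assertion that at most two isolated vertices of $N$ have positive degree into $L$. That assertion would need an idea beyond the swaps available here.
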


\begin{proof}
Suppose that $e(J_N,L)> 2|L|/k$ for any $N\in\{P^*,M,M'\}$. By Claim \ref{iso}, we get that there are three isolated vertices $u_{l_1},u_{l_2},u_{l_3}$ with $e(u_{l_j},L)>0$ for every $j\in [3]$. 
Assume $i_j\equiv l_j \pmod{k}$ and $i_1<i_2<i_3$.
It is easy to check that $i_3-i_1\ge 2$.
By Claim \ref{iso2}, the proof is finished.
\end{proof}

From Claim \ref{2edges}, the following Claim can be directly obtained.

\begin{claim}\label{less2}
    If $e(I_N,L)\le 2|L|/k$ for any $N\in\{P^*,M,M'\}$, then   $e(J_N,L)\le 4|L|/k-e(I_N,L)$. 
\end{claim}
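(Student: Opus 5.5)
The plan is to read the claim off Claim~\ref{2edges}. Fix $N\in\{P^*,M,M'\}$ with $e(I_N,L)\le 2|L|/k$. The target $e(J_N,L)\le 4|L|/k-e(I_N,L)$ is implied by $e(J_N,L)\le 2|L|/k$, because $4|L|/k-e(I_N,L)\ge 4|L|/k-2|L|/k=2|L|/k$. Claim~\ref{2edges} supplies exactly this bound $e(J_N,L)\le 2|L|/k$ for every $N\in\{P^*,M,M'\}$, so chaining the two inequalities finishes the argument.

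To make the bound $e(J_N,L)\le 2|L|/k$ transparent, I will spell out where it comes from, combining the ``moreover'' part of Claim~\ref{2edges} with Claim~\ref{iso}. The first says at most two isolated vertices $u_{l_1},u_{l_2}$ of $N$ send edges to $L$. For each such $u_l$, Claim~\ref{iso} says at most one of $\overrightarrow{e}(u_l,L)$, $\overleftarrow{e}(u_l,L)$ is positive. Hence $u_l$ has neighbours in only one of $L_{l-1},L_{l+1}$, each of size $|L|/k$ since $|P_i|=|P_j|$ for all $i,j$. So $e(u_l,L)\le |L|/k$, and summing over at most two vertices gives $e(J_N,L)\le 2|L|/k$.

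The only point needing care is the equal-size fact $|L_i|=|L|/k$. This holds because every copy of $P_3^*$ and of $M_2$ is transversal in the reduced graph, so $|P_i|$ is the same for all $i$ while each $|V_i|=N_0$. Given that, no genuine obstacle remains: the claim is a direct arithmetic consequence of Claim~\ref{2edges} and the hypothesis $e(I_N,L)\le 2|L|/k$.
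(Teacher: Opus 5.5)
Your proposal is correct and matches the paper, which also obtains this claim directly from Claim~\ref{2edges}: the bound $e(J_N,L)\le 2|L|/k$, combined with the hypothesis $e(I_N,L)\le 2|L|/k$, gives $e(J_N,L)\le 4|L|/k-e(I_N,L)$. Your derivation of $e(J_N,L)\le 2|L|/k$ from the ``moreover'' part and Claim~\ref{iso}, using $|L_i|=|L|/k$, is consistent with how the paper argues Claim~\ref{2edges}.
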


\begin{claim}\label{3edges}
    If $2|L|/k<e(I_{P^*},L) \leq 3|L|/k$, then $e(J_{P^*},L)\le 4|L|/k-e(I_{P^*},L)$.  
\end{claim}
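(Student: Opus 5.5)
The plan is to argue by contradiction, following the pattern of Claims~\ref{4edges} and~\ref{iso2}. Suppose $2|L|/k<e(I_{P^*},L)\le 3|L|/k$ but $e(I_{P^*},L)+e(J_{P^*},L)>4|L|/k$.

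\textbf{Step 1: pin down the isolated vertices.} Since $e(I_{P^*},L)\le 3|L|/k$, we get $e(J_{P^*},L)>|L|/k$. By Claim~\ref{iso}, each vertex sends edges to at most one side of $L$, contributing at most $|L|/k$. So Claim~\ref{2edges} forces exactly two isolated vertices $u_{l_1},u_{l_2}$ of $P^*$ to have $e(u_{l_j},L)>0$. Since their sum exceeds $|L|/k$, Claim~\ref{iso2} makes their indices consecutive, say $l_2=l_1+1 \pmod k$. The case where the two isolated vertices ``point'' into a common part $L_j$ with total more than $|L|/k$ also gives a common neighbour. With the wrap-around (indices $1,5,\dots,k$), the possible consecutive pairs are $\{u_5,u_6\}$, $\{u_{l},u_{l+1}\}$ for $6\le l<k$, and $\{u_k,u_1\}$.

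\textbf{Step 2: pin down the path vertices.} Record the directed slots $\overleftarrow e(u_i,L),\overrightarrow e(u_i,L)$ for $i\in\{2,3,4\}$. Since $e(I_{P^*},L)>2|L|/k$, at least three of these six slots are positive. Claim~\ref{2p3} allows at most one $u_i$ with both slots positive. Claim~\ref{P^*}(1) and (3) then restrict the configurations to a short list, up to the reflection $u_2\leftrightarrow u_4$:
\begin{itemize}
\item[(a)] $u_2$ has both slots positive, together with $\overrightarrow e(u_3,L)>0$ and $\overleftarrow e(u_4,L)>0$;
\item[(b)] $u_3$ has both slots positive, with exactly one further slot positive on $u_2$ or $u_4$;
\item[(c)] exactly one slot positive on each of $u_2,u_3,u_4$.
\end{itemize}

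\textbf{Step 3: augment in each configuration.} Each configuration should yield a replacement of $P^*$ by two disjoint copies from $\{P^*_3,M_2\}$ on $V(P^*)\cup\{v_1,\dots,v_k\}$ with $v_s\in L_s$, contradicting maximality. The template is as follows.
\begin{itemize}
\item One new copy uses the isolated edges $u_{l_1}v$ and $u_{l_2}v'$ as an $M_2$, or as a $P^*_3$ through a common neighbour in $L$.
\item The other new copy uses the path $u_2u_3u_4$, or an edge $u_iv$ from a positive slot, completed by the remaining $u$'s and $v$'s.
\end{itemize}
Whenever two of the chosen $L$-vertices would have to lie in the same part $L_j$, I would use the degree excess: the total $e(I_{P^*},L)+e(J_{P^*},L)>4|L|/k$ exceeds the sum of the other slots. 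This forces the two relevant slots into $L_j$ to sum to more than $|L|/k=|L_j|$, so they share a common neighbour. That common neighbour turns the conflict into a $P^*_3$, exactly as in the final paragraph of Case~1 of Claim~\ref{4edges}.

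\textbf{Main obstacle.} The hard part is the bookkeeping of collisions when the isolated pair is adjacent to the path. This happens for $\{u_5,u_6\}$, where $u_5$ may point into $L_4$, and for $\{u_k,u_1\}$, where $u_1$ may point into $L_2$. There the natural $L$-neighbours of $u_5$ (respectively $u_1$) and of $u_4$ (respectively $u_2$) can coincide in part. I would first try the common-neighbour argument, turning $u_4v_5$ or $v_4u_5$ into a $P^*_3$ centred in $L_4$ or $L_5$. If that fails, I would rebuild both copies around $u_3$ alone, as in Claim~\ref{P^*}(3).
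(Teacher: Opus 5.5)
Your Step 3 has a genuine gap: in general the first new copy cannot be built from both isolated edges.

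After Step 1 the two isolated vertices are cyclically consecutive, say $u_l,u_{l+1}$, and by Claim~\ref{iso} each has its $L$-neighbours in only one part. The edges $u_lv$ and $u_{l+1}v'$ fit into one transversal copy only if $v\in L_{l-1}$ and $v'\in L_{l+2}$. In that orientation the claim is trivial: $\{v_{l-1},u_l,u_{l+1},v_{l+2}\}$ plus one $L$-vertex in each remaining part is a new $M_2$, while $P^*$ with $u_l,u_{l+1}$ replaced by $v_l,v_{l+1}$ is still a $P^*_3$.

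In the other three orientations, at least one isolated vertex has its $L$-neighbours in the other one's part ($N(u_l)\cap L\subseteq L_{l+1}$ or $N(u_{l+1})\cap L\subseteq L_l$). So the two edges never lie in a common transversal copy. They also point into different parts, so there is no common neighbour giving a $P^*_3$ either. Your degree-excess device repairs two chosen $L$-vertices falling into the same $L_j$. Here, however, the clash is between an $L$-vertex and $u_{l+1}$ (or $u_l$) itself, so it does not help. The real difficulty is the two isolated edges clashing with each other, not (only) with the path, which is what your last paragraph addresses.

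The missing idea, which is the paper's, is to separate the isolated pair. Use just one isolated edge $u_{l_1}v$, with $l_1$ chosen so that the edge stays off the parts $2,3,4$ (possible outside the trivial orientation). Combine it with an edge at a path vertex into an $M_2$, and build the other copy from what remains of the path:
\begin{itemize}
\item With at most one slot per path vertex, if $\overrightarrow e(u_2,L),\overleftarrow e(u_4,L)>0$, then $u_2,u_4$ have a common neighbour $v_3\in L_3$ (as $e(u_3,L)\le|L|/k$ and $e(I_{P^*},L)>2|L|/k$). Take $u_2v_3u_4$ and $u_3v\cup u_{l_1}v'$.
\item Otherwise, still with one slot per path vertex, take $u_2v\cup u_4v'$ and $u_3v''\cup u_{l_1}v'''$.
\item If $u_2$ has both slots, take $v_1u_2v_3$ and $u_{l_1}v\cup u_3u_4$.
\item If $u_3$ has both slots, take $v_2u_3v_4$ and $u_{l_1}v\cup u_2v'$.
\end{itemize}

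Your Step 2 also needs repair:
\begin{itemize}
\item Configuration (a) must allow only one of $\overrightarrow e(u_3,L)$, $\overleftarrow e(u_4,L)$ to be positive, since three positive slots already suffice for $e(I_{P^*},L)>2|L|/k$.
\item The ``exactly one further slot'' in (b) would need Claim~\ref{P^*}(2), which assumes $e(I_{P^*},L)>3|L|/k$.
\item Your excess inequality forces two slots to sum to more than $|L|/k$ only when exactly five slots are positive. In your (a) six are positive, so the common neighbour used in Case~1 of Claim~\ref{4edges} is not available.
\end{itemize}
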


\begin{proof}
 If only one isolated vertex $u_{l_j}\in V$  of $P^*$ with $e(u_{l_j},L)>0$, by Claim \ref{iso}, we have at most one of $\overrightarrow e(u_l,L)>0$ and $\overleftarrow e(u_l,L)>0$ holds, then $e(J_{P^*},L)+e(I_{P^*},L)\le 4|L|/k$. By Claim \ref{2edges}, we only need to consider that there are two isolated vertices $u_{l_1},u_{l_2}\in V$ of $P^*$ with $e(u_{l_j},L)>0$ for $j\in [2]$. Suppose that $e(J_{P^*},L)> 4|L|/k-e(I_{P^*},L)$. Let 
$i_j \equiv l_j \pmod{k}$ for $j\in [2]$.
Assume $i_1<i_2$. By Claim \ref{iso2}, we have $i_2-i_1\le 2$.
Choose $v_{l_i}\in N(u_{l_1})\cap L$, $v_{l_m}\in N(u_{l_2})\cap L$.
Then there are two possible cases.

{\bf Case 1.} At most one of $\overrightarrow e(u_i, L)>0$ and $\overleftarrow e(u_i, L)>0$ holds for any $i\in [2,4]$.
Since $i_2-i_1\le 2$, there exists at lesat one of $l_1$ and $l_2$, say $l_1$, satisfying $l_1\neq 1,5$.
If $\overrightarrow e(u_2,L)>0$ and $\overleftarrow e(u_4,L)>0$, then we have $N(u_2)\cap N(u_4)\cap L_3\neq\emptyset$ since $e(I_{P^*},L)> 2|L|/k$. Take $v_3\in N(u_2)\cap N(u_4)\cap L_3$, $v_{j_3}\in N(u_3)\cap L$. 
Then we can obtain a new $\{P^*_3,M_2\}$-tiling $\mathcal{H}'$ by replacing $P^*$ with $u_2v_3u_4\cup(\{v_{l_i},u_1,u_5,\cdots, u_k\}\setminus\{u_{l_i}\})$ and $u_{l_i}v_{l_1}\cup u_3v_{j_3}\cup(\{v_1,v_2,v_4,\cdots, v_k\}\setminus\{v_{l_1},v_{l_i},v_{j_3}\})$, where $v_s\in L_s$.  Clearly, $|V(\mathcal{H}')|=|V(\mathcal{H})|+k$ contradicts the maximality of $\mathcal{H}$.
If at most one of $\overrightarrow e(u_2,L)>0$ and $\overleftarrow e(u_4,L)>0$ holds, take $v_{j_2}\in N(u_2)\cap L$, $v_{j_3}\in N(u_3)\cap L$, $v_{j_4}\in N(u_4)\cap L$.
Then we can obtain a new $\{P^*_3,M_2\}$-tiling $\mathcal{H}''$ by replacing $P^*$ with $u_2v_{j_2}\cup u_4v_{j_4}\cup (\{v_{l_1},u_1,v_3,u_5,\cdots, u_k\}\setminus\{u_{l_i}\})$ and $v_{l_i}u_{l_1}\cup u_3v_{j_3}\cup(\{v_1,v_2,v_4,\cdots, v_k\}\setminus\{v_{l_1},v_{l_i},v_{j_3}\})$, where $v_s\in L_s$.  This contradicts the maximality of $\mathcal{H}$.

{\bf Case 2.} There exists a vertex $u_i\in \{u_2,u_3,u_4\}$ such that $e(u_i,L)=0$. By symmetry, we only need to consider the following two subcases.

{\bf Case 2.1.} $\overrightarrow e(u_2,L)>0$ and $\overleftarrow e(u_2,L)>0$. Choose $v_1\in N(u_2)\cap L_1$, $v_3\in N(u_2)\cap L_3$. By Claim \ref{P^*} (3), we have $\overrightarrow e(u_3,L)>0$ or $\overleftarrow e(u_4,L)>0$. 
Since $i_2-i_1\le 2$, there are at least one of $l_1,l_2$, say $l_1$, satisfying $l_1\neq 5$.
Then we can obtain a new $\{P^*_3,M_2\}$-tiling $\mathcal{H}''$ by replacing $P^*$ with $v_1u_2v_3\cup (\{v_{l_1},v_4,u_5,\cdots, u_k\}\setminus\{u_{l_i}\})$ and $v_{l_i}u_{l_1}\cup u_3u_4\cup(\{u_1,v_2,v_5,\cdots, v_k\}\setminus\{v_{l_1},v_{l_i}\})$, where $v_s\in L_s$. This contradicts the maximality of $\mathcal{H}$.

{\bf Case 2.2.} $\overrightarrow e(u_3,L)>0$ and $\overleftarrow e(u_3,L)>0$. By symmetry, we only need to consider $e(u_2,L)>0$. Choose $v_{j_2}\in N(u_2)\cap L$.
There are at least one of $l_1,l_2$, say $l_1$, satisfying $l_1\neq 1$.
Then we can obtain a new $\{P^*_3,M_2\}$-tiling $\mathcal{H}'$ by replacing $P^*$ with $v_2u_3v_4\cup (\{v_{l_1},u_1,u_4,u_5,\cdots, u_k\}\setminus\{u_{l_1}\})$ and $v_{l_i}u_{l_1}\cup u_2v_{j_2}\cup(\{v_1,v_3,u_4,v_5,\cdots, v_k\}\setminus\{v_{l_1},v_{l_i},v_{j_2}\})$, where $v_s\in L_s$.  This contradicts the maximality of $\mathcal{H}$.
\end{proof}

\begin{claim}\label{3edgesss}
    If $ 2|L|/k<e(I_M,L) \leq 3|L|/k$ and $ 2|L|/k<e(I_M',L) \leq 3|L|/k$, then  $e(J_M,L)\le 4|L|/k-e(I_M,L)$ and $e(J_{M'},L)\le 4|L|/k-e(I_{M'},L)$.  
\end{claim}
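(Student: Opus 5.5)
We argue by contradiction, in the same way as Claim~\ref{3edges}, now for $M=u_2u_3\cup u_4u_5\cup\{u_1,u_6,\dots,u_k\}$ and $M'=u_2u_3\cup u_5u_6\cup\{u_1,u_4,u_7,\dots,u_k\}$. Suppose $2|L|/k<e(I_N,L)\le 3|L|/k$ but $e(J_N,L)>4|L|/k-e(I_N,L)$ for $N\in\{M,M'\}$.

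\emph{Step 1: isolated vertices.} By Claim~\ref{2edges}, at most two isolated vertices of $N$ send edges to $L$. By Claim~\ref{iso}, each of them sends edges to $L$ on at most one side, so it contributes at most $|L|/k$. If only one isolated vertex is adjacent to $L$, then $e(J_N,L)\le |L|/k$. Together with $e(I_N,L)\le 3|L|/k$, this gives $e(J_N,L)\le 4|L|/k-e(I_N,L)$, which is the claim. So we may assume exactly two isolated vertices $u_{l_1},u_{l_2}$ are adjacent to $L$. By Claim~\ref{iso2} their indices differ by at most $2$. Hence at least one of them, say $u_{l_1}$, avoids any prescribed single position, just as "$l_1\neq 1,5$" was used in Claim~\ref{3edges}. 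Fix $v_{l_i}\in N(u_{l_1})\cap L$.

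\emph{Step 2: non-isolated vertices.} Since $e(I_N,L)>2|L|/k$, at least three of the eight possible sides of the four non-isolated vertices carry edges to $L$. We split into two cases. In Case~1, no non-isolated vertex sends edges to $L$ on both sides, so at least three non-isolated vertices are adjacent to $L$. In Case~2, some non-isolated $u_i$ sends edges to $L$ on both sides; by Claim~\ref{2p3} it is unique, and by the symmetry of $M$ and $M'$ we may take $u_i\in\{u_2,u_3\}$.

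In Case~2 we apply Claim~\ref{M}(1)--(3) or Claim~\ref{M'}(1)--(3) to kill most of the remaining sides, which leaves a short list of configurations. In each configuration we rebuild $N$ together with fresh vertices $v_s\in L_s$ into two members of $\{P_3^*,M_2\}$. The first uses the double-sided vertex $u_i$ as the centre of a $P_3^*$, namely $v_{i-1}u_iv_{i+1}$, with the remaining positions filled from $N$ or $L$. The second uses the edge $u_{l_1}v_{l_i}$ together with one surviving edge of $N$, or a new edge from another non-isolated vertex into $L$, to form an $M_2$. This enlarges $|V(\mathcal{H})|$ by $k$ and contradicts maximality.

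In Case~1, when both $\overrightarrow e(u_2,L)$ and $\overleftarrow e(u_4,L)$ are positive for $M$ (or the analogue at $u_3,u_5$ across $L_4$ for $M$), the bound $e(I_N,L)>2|L|/k$ forces a common neighbour in the middle part, for instance $v_4\in N(u_3)\cap N(u_5)\cap L_4$. This yields a new $P_3^*$, and the leftover non-isolated vertices together with $u_{l_1}v_{l_i}$ give an $M_2$. Otherwise we take distinct neighbours in $L$ of three non-isolated vertices and pair them into two $M_2$'s, with one of them also absorbing $u_{l_1}v_{l_i}$.

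\emph{Main obstacle.} The hard part is checking that all the chosen $v_s$ lie in distinct parts and that every new copy is transversal. Conflicts occur exactly when $v_{l_i}$ falls in the same part as a neighbour already used, for example $i=6$, $j=5$ for $M$, or $i=4$, $j=5$ or $i=7$, $j=6$ for $M'$. In those cases I would first try to handle them as in Claim~\ref{4edgesM}. We may assume $u_{l_1}$ is then the only useful isolated vertex, and use $e(I_N,L)+e(J_N,L)>4|L|/k$ to find a common neighbour such as $v_5\in N(u_4)\cap N(u_6)\cap L_5$. That common neighbour lets us replace $N$ by two copies of $P_3^*$. The enumeration for $M'$ has the gap vertex $u_4$ sitting between its two edges, so it has more subcases, but each one follows the same template.
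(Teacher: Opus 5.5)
Your Step~1 matches the paper's reduction to exactly two isolated vertices $u_{l_1},u_{l_2}$ of $N$ adjacent to $L$, but Step~2 does not close. You import the non-isolated-vertex case analysis of Claim~\ref{3edges} and leave it at template level (``a short list of configurations'', ``each one follows the same template''). The one inference you do spell out is false for $M_2$-copies.

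In your Case~1 you assert that $e(I_N,L)>2|L|/k$ forces a common neighbour of $u_2,u_4$ in $L_3$ once $\overrightarrow e(u_2,L),\overleftarrow e(u_4,L)>0$. For $P^*$ this kind of step works: there are only three one-sided non-isolated vertices, each contributing at most $|L|/k$, with total above $2|L|/k$, so any two sum to more than $|L|/k$. For $M$ there are four. Take $\overleftarrow e(u_3,L)$ and $\overleftarrow e(u_5,L)$ close to $|L|/k$, and $\overrightarrow e(u_2,L),\overleftarrow e(u_4,L)$ positive but tiny. This is consistent with Claims~\ref{M} and~\ref{2p3} and gives no guaranteed common neighbour in $L_3$. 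It is also not covered by your $u_3,u_5$ pattern, since $u_3$ sends its edges to $L_2$, and it is excluded from your ``otherwise'' branch. So as written, configurations are left unhandled.

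The idea you are missing is that for $N\in\{M,M'\}$ no information about the non-isolated vertices is needed, because $N$ already has two disjoint edges that can be separated. Take $v_{l_i}\in N(u_{l_1})\cap L$ and $v_{l_m}\in N(u_{l_2})\cap L$. Replace $M$ by the two $M_2$-copies $u_2u_3\cup u_{l_1}v_{l_i}$ and $u_4u_5\cup u_{l_2}v_{l_m}$ (for $M'$, use $u_5u_6$ in place of $u_4u_5$). Fill the remaining positions with leftover vertices of $N$ and fresh vertices of $L$; this covers $k$ more vertices, contradicting maximality.

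The only check is that each new edge is attached to an old edge whose parts it avoids. For $M$, $u_1v$ with $v\in L_2$ must go with $u_4u_5$, and $u_6v$ with $v\in L_5$ must go with $u_2u_3$. Such an assignment always exists by Claim~\ref{iso2}: the two isolated vertices are in fact consecutive, since $e(J_N,L)>4|L|/k-e(I_N,L)\ge |L|/k$. This is the paper's proof. It uses only the upper bound $e(I_N,L)\le 3|L|/k$ (in the one-isolated-vertex reduction); the lower bound $2|L|/k$ plays no role.
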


\begin{proof}

 If only one isolated vertex $u_{l_j}\in V$  of $M$ or $M'$ with $e(u_{l_j},L)>0$, by Claim \ref{iso}, we have at most one of $\overrightarrow e(u_l,L)>0$ and $\overleftarrow e(u_l,L)>0$ holds, then $e(J_M,L)+e(I_M,L)\le 4|L|/k$ and $e(J_M',L)+e(I_M',L)\le 4|L|/k$. By Claim \ref{2edges}, we only need to consider that there are two isolated vertices $u_{l_1},u_{l_2}\in V$ of $M$ or $M'$ with $e(u_{l_j},L)>0$ for $j\in [2]$. Suppose $e(J_M,L)> 4|L|/k-e(I_M,L)$ and $e(J_{M'},L)> 4|L|/k-e(I_{M'},L)$. Let 
$i_j \equiv l_j \pmod{k}$ for $j\in [2]$.
Assume that $i_1<i_2$. By Claim \ref{iso2}, we have $i_2-i_1\le 2$.
Choose $v_{l_i}\in N(u_{l_1})\cap L$, $v_{l_m}\in N(u_{l_2})\cap L$.

There is at least one of $l_1,l_2$, say $l_1$, satisfying $l_1\neq 1$.
Then we can obtain a new $\{P^*_3,M_2\}$-tiling $\mathcal{H}'$ either by replacing $M$ with $u_2u_3\cup\ u_{l_1}v_{l_i}\cup\ (\{v_{l_2},u_1,v_4,v_5,u_6,\cdots, u_k\}\setminus\{u_{l_1},u_{l_i}\})$ and $v_{l_m}u_{l_2}\cup u_4u_5\cup(\{u_{l_i},v_1,v_2,v_3,v_6,\cdots, v_k\}\setminus\{v_{l_m},v_{l_2}\})$, or by replacing $M'$ with $u_2u_3\cup\ u_{l_1}v_{l_i}\cup\ (\{v_{l_2},u_1,u_4,v_5,v_6,u_7,\cdots, u_k\}\setminus\{u_{l_1},u_{l_i}\})$ and $v_{l_m}u_{l_2}\cup u_5u_6\cup(\{u_{l_i},v_1,v_2,v_3,v_4,v_7,\cdots, v_k\}\setminus\{v_{l_m},v_{l_2}\}\})$, where $v_s\in L_s$. This contradicts the maximality of $\mathcal{H}$.
\end{proof}

To finish up, we proceed by case analysis on the number of edges between non-isolated vertices of any copy of $P^*_3$ or $M_2$ and $L$, as detailed below.
For any copy $P^*$ of $P^*_3$,
Claims~\ref{more5}, \ref{4edges}, \ref{3edges} and \ref{less2} altogether ensure that
$e(I_{P^*},L)+e(J_{P^*},L)\le 4|L|/k$ holds for every possible value of $e(I_{P^*},L)$.
Similarly,
let $M$, $M'$ be any two copies of $M_2$. For any $N\in\{M,M'\}$, Claims~\ref{more5M}, \ref{4edgesM}, \ref{3edgesss} and \ref{less2} together imply that $e(I_{N},L)+e(J_{N},L)\le 4|L|/k$ holds in various cases on the size of $e(I_{N},L)$. This completes the proof of Fact \ref{0485}.

\end{document}